\documentclass[a4paper]{amsart}
\usepackage{latexsym,bm,stmaryrd}
\usepackage{amsmath,amsthm,amsfonts,amssymb,mathrsfs,pb-diagram}

\usepackage{microtype}
\usepackage{mathtools}
\usepackage{mathbbol,wasysym}

\let\<=\langle
\let\>=\rangle
\def\({\big(}
\def\){\big)}

\def\Z{\mathbb{Z}}
\def\Q{\mathbb{Q}}

\def\N{\mathbb{N}}

\def\t{\mathfrak{t}}

\def\O{\mathcal{O}}

\def\lam{\lambda}
\def\Lam{\Lambda}
\def\Sym{\mathfrak{S}}
\def\eps{\varepsilon}
\def\bc{\mathbf{c}}
\newcommand\RR{\mathscr{R}}
\newcommand\R[1][n]{\RR_{#1}^{\Lambda}}

\def\bn[#1,#2]{\begin{bmatrix}#1\\#2\end{bmatrix}}

\def\fg{\mathfrak{g}}

\newcommand\bi{\mathbf{i}}

\def\veps{\varepsilon}

\DeclareMathOperator\Hom{Hom}

\DeclareMathOperator\cha{char}

\DeclareMathOperator\Tr{Tr}

\DeclareMathOperator\PD{\mathscr{P\!D}}

\DeclareMathOperator\diag{diag}

\DeclareMathOperator\wt{wt}
\DeclareMathOperator\Proj{Proj}

\DeclareMathOperator\Mod{Mod}
\DeclareMathOperator\NH{NH}
\DeclareMathOperator\Irr{Irr}

\title[]
{Piecewise dominant sequences and the cocenter of the cyclotomic quiver Hecke algebras}
\subjclass[2010]{20C08, 16G99, 06B15}
\keywords{Cyclotomic quiver Hecke algebras, categorification}
\author{Jun Hu}\address{MIIT Key Laboratory of Mathematical Theory and Computation in Information Security\\
School of Mathematics and Statistics\\
  Beijing Institute of Technology\\
  Beijing, 100081, P.R. China}
\email{junhu404@bit.edu.cn}
\author{Lei Shi}\address{School of Mathematics and Statistics\\
  Beijing Institute of Technology\\
  Beijing, 100081, P.R. China}
\email{3120195738@bit.edu.cn}

\numberwithin{equation}{section}
\newtheorem{prop}[equation]{Proposition}
\newtheorem{thm}[equation]{Theorem}

\newtheorem{cor}[equation]{Corollary}
\newtheorem{conj}[equation]{Conjecture}
\newtheorem{conja}[equation]{Center Conjecture}
\newtheorem{conjb}[equation]{Indecomposability Conjecture}

\newtheorem{lem}[equation]{Lemma}
\newtheorem{examp}[equation]{Example}

\theoremstyle{definition}
\newtheorem{dfn}[equation]{Definition}
\theoremstyle{remark}
\newtheorem{rem}[equation]{Remark}

\begin{document}

\begin{abstract} In this paper we study the cocenter of the cyclotomic quiver Hecke algebra $\RR^\Lam_\alpha$ associated to an {\it arbitrary} symmetrizable Cartan matrix $A=(a_{ij})_{i,j}\in I$, $\Lam\in P^+$ and $\alpha\in Q_n^+$. We introduce a notion called  ``piecewise dominant sequence'' and use it to construct  some explicit homogeneous elements which span the maximal degree component of the cocenter of $\RR^\Lam_\alpha$. We show that the minimal degree components of the cocenter of $\RR^\Lam_\alpha$ is spanned by the image of some KLR idempotent $e(\nu)$, where each $\nu\in I^\alpha$ is piecewise dominant. As an application, we show that the weight space $L(\Lam)_{\Lam-\alpha}$ of the irreducible highest weight module $L(\Lam)$ over $\mathfrak{g}(A)$ is nonzero (equivalently, $\R[\alpha]\neq 0$) if and only if there exists a piecewise dominant sequence $\nu\in I^\alpha$.
\end{abstract}

\maketitle
\setcounter{tocdepth}{1}
\tableofcontents

\section{Introduction}

The quiver Hecke algebras (also known as the KLR algebras) $\RR_\beta$ are some remarkable infinite families of $\Z$-graded algebras introduced by Khovanov-Lauda \cite{KL1, KL2}, and independently by Rouquier \cite{Rou1, Rou2} around 2008. These algebras depend on a symmetrizable Cartan matrix $A=(a_{ij})_{i,j\in I}$,
$\beta\in Q_n^+$ and some polynomials $\{Q_{ij}(u,v)|i,j\in I\}$. These algebras play important roles in the categorification of the negative part $U_q(\mathfrak{g})^{-}$ of the quantum group $U_q(\mathfrak{g})$, where $\mathfrak{g}=\mathfrak{g}(A)$. When the ground field $K$ has characteristic $0$, $A$ is symmetric and the polynomial $\{Q_{ij}(u,v)|i,j\in I\}$ are chosen as \cite[\S3.2.4]{Rou2}, Rouquier \cite{Rou2}, and independently Varagnolo-Vasserot \cite{VV} have proved that the categorification sends the indecomposable projective modules over the quiver Hecke algebra $\RR_\beta$ to the canonical bases of $U_q(\mathfrak{g})^{-}$.

For each dominant integral weight $\Lam\in P^+$, Khovanov-Lauda, and Rouquier have also introduced a graded quotient $\R[\beta]$ of $\RR_\beta$, called the cyclotomic quiver Hecke algebra (also known as the cyclotomic KLR algebras). Khovanov and Lauda have conjectured that the category of finite dimensional projective modules over these $\R[\beta]$ should give a categorification of the integrable highest weight module $L(\Lam)$ over the quantum group $U_q(\mathfrak{g})$. Khovanov-Lauda's Cyclotomic Categorification Conjecture was later proved by Kang and Kashiwara \cite{KK}. The cyclotomic quiver Hecke algebra $\R[\beta]$ behaves in many aspects similar to the cyclotomic Hecke algebra of type $G(\ell,1,n)$ (also known as the Ariki-Koike algebras). In fact, in the case of type $A_{\infty}$ or affine type $A_{(e-1)}^{(1)}$, assuming the ground field $K$ contains a primitive $e$-th root of unity, and $\{Q_{ij}(u,v)|i,j\in I\}$ are chosen as \cite[\S3.2.4]{Rou2}, Brundan and Kleshchev have proved in \cite{BK:GradedKL} that each $\R[\beta]$ is isomorphic to the block algebra of the cyclotomic Hecke algebra of type $G(\ell,1,n)$ corresponding to $\beta$, where $\ell$ is the level of $\Lam$. Mathas and the fist author of this paper have constructed a $\Z$-graded cellular basis for the cyclotomic quiver Hecke algebra $\R[\beta]$, and use this basis to construct a homogeneous symmetrizing form on $\R[\beta]$ (see \cite{HM}, \cite[Remark 4.7]{HuL}). More recently, Mathas and Tubenhauer have constructed $\Z$-graded cellular bases for the cyclotomic quiver Hecke algebra in types $C_{\Z_{\geq 0}}, C_e^{(1)}, B_{\Z_{\geq 0}}, A_{2e}^{(2)}$ and $D_{e+1}^{(2)}$.

For the cyclotomic quiver Hecke algebras $\R[\beta]$ of general type, we have given in \cite{HS} a closed formula for the graded dimension of $\R[\beta]$. The formula depends only on the root system associated to $A$ and the dominant weight $\Lam$ but not on the chosen ground field $K$, which immediately implies that the cyclotomic quiver Hecke algebra $\R[\beta](\O)$ is free over $\O$ for any commutative ground ring $\O$. These graded dimension formulae are also generalized to the cyclotomic quiver Hecke superalgebras in \cite{HS2}. The $i$-restriction functor $E_i$ and the $i$-induction functor $F_i$ play key roles in Kang and Kashiwara's proof of Khovanov-Lauda's Cyclotomic Categorification Conjecture. Rouquier has noticed (\cite{Rou1}) that the biadjointness of $E_i$ and $F_i$ induces a natural homogeneous Frobenius form on $\R[\beta]$. Shan, Varagnolo and Vasserot have proved (\cite{SVV}) that this Frobenius form is actually a homogeneous symmetrizing form on $\R[\beta]$ of degree $-d_{\Lam,\beta}$. There are now two major unsolved open problems on $\R[\beta]$ as follows:

\begin{conja} The centers of $\R[\beta]$ consists of symmetric elements in its KLR $x$ and $e(\nu)$ generators.
\end{conja}

\begin{conjb}\label{conj2}  The algebra $\R[\beta]$ is indecomposable.
\end{conjb}

The second conjecture is equivalent to the claim that the dimension of the degree $0$ component of the center $Z(\R[\beta])$ is one dimensional (i.e., equal to $Ke(\beta)$). In particular, the Indecomposibility Conjecture is actually a consequence of the Center Conjecture. Conjecture \ref{conj2} is a slight generalization of \cite[Conjecture 3.33]{SVV} in that $K$ can be of positive characteristic. There are a number of special cases where the above two conjectures were verified. For example, assume $\{Q_{ij}(u,v)|i,j\in I\}$ are given as \cite[(11)]{SVV}. If $\cha K=0$, $\mathfrak{g}$ is symmetric and of finite type, and  then the above conjecture holds by \cite[Remark 3.41]{SVV}; if $\mathfrak{g}$ be of type $A_{\infty}$ or affine type $A_{(e-1)}^{(1)}$ with $e>1$ and $(e,p)=1$, where $p:=\cha K$, then the main result of \cite{BK:GradedKL} shows that over a finite extension field of $K$, each cyclotomic quiver Hecke algebra $\R[\alpha]$ is isomorphic to the block algebra of the cyclotomic Hecke algebra of type $G(\ell,1,n)$ (\cite{BK:GradedKL}) which corresponds to $\alpha$. In this case, the above conjecture holds because $e(\alpha):=\sum_{i\in I^\alpha}e(\bi)$ is a block idempotent of the corresponding cyclotomic Hecke algebra by \cite{LM} and \cite{Br}. By \cite[Theorem 1.9]{HS2} and \cite{HuLin}, for arbitrary $\{Q_{ij}(u,v)|i,j\in I\}$, the above conjecture also holds whenever $\beta=\sum_{j=1}^n\alpha_{i_j}$ with $\alpha_{i_1},\cdots, \alpha_{i_n}$ pairwise distinct. The first author and Huang Lin proposed in \cite{HuLin} a ``cocenter approach'' to the Center Conjecture for the cyclotomic quiver Hecke algebra $\R[\beta]$ of general type.

The current work is motivated by the study of the above two conjectures and the ``cocenter approach'' proposed in \cite{HuLin}. The degree $-d_{\Lam,\beta}$ homogeneous symmetrizing form on $\R[\beta]$ implies that there is a $\Z$-graded linear isomorphism: $\Tr(\R[\beta]):=\R[\beta]/[\R[\beta],\R[\beta]]\cong \bigl(Z(\R[\beta])\bigr)^*\<d_{\Lam,\beta}\>$. Thus the study of the degree $0$ component of the center $Z(\R[\beta])$ can be transformed into the study of the degree $d_{\Lam,\beta}$ component of the cocenter $\Tr(\R[\beta])$. In this paper we introduce in Definition \ref{pddfn} a new notion called  ``piecewise dominant sequences'', and use them to construct in Theorem \ref{principle generator} some explicit homogeneous elements which span the  degree $d_{\Lam,\beta}$ component of the cocenter of $\RR^\Lam_\alpha$. Remarkably, these generators are all polynomials in the KLR's x generators. We show in Theorem \ref{principle generator} that the degree $0$ components of the cocenter of $\RR^\Lam_\alpha$ is spanned by the image of $e(\nu)$ for those piecewise dominant sequence $\nu$ in $I^\alpha$. As an application, we show that the weight space $L(\Lam)_{\Lam-\alpha}$ of the irreducible highest weight module $L(\Lam)$ over $\mathfrak{g}(A)$ is nonzero (equivalently, $\R[\alpha]\neq 0$) if and only if there exists a piecewise dominant sequence $\nu\in I^\alpha$. We give some normal form for the $K$-linear generators of the cocenter $\Tr(\R[\alpha])$ in Theorem \ref{generator}. We recover in Corollary  \ref{maincor0} Shan-Varagnolo-Vasserot's result \cite[Theorem 3.31(a)]{SVV} on the range of the degrees of the cocenter $\Tr(\R[\beta])$ in an elementary way. We also show in Theorem \ref{4mainthm} that Conjecture \ref{conj2} holds over {\it arbitrary} ground field when $\{Q_{ij}(u,v)|i,j\in I\}$ are given as \cite[\S3.2.4]{Rou2}, $\mathfrak{g}$ is either symmetric and of finite type, or $\mathfrak{g}$ is of type $A_{\infty}$ or affine type $A_{(e-1)}^{(1)}$ with $e>1$.
\smallskip

The paper is organised as follows. In Section 2 we recall some basic definitions and properties of the quiver Hecke algebra $\RR_\beta$ and its cyclotomic
quotient $\R[\beta]$. In Section 3 we investigate some relations inside the cocenter $\Tr(\R[\alpha])$. Theorem \ref{generator} gives a subset of $K$-linear generator for $\Tr(\R[\alpha])$, where its proof makes essentially use of Kang-Kashiwara's categorification result Proposition \ref{KK1}. Lemma \ref{relations} and Corollary \ref{zero element} give some useful relations inside the cocenter $\Tr(\R[\alpha])$. As an easy application of Theorem \ref{generator}, Proposition \ref{svv1a} proves the positivity of the degrees of the cocenter $\Tr(\R[\alpha])$. In Section 4 we introduce a new notion called ``piecewise dominant sequence'' in Definition \ref{pddfn}. In our first main result Theorem \ref{principle generator} we use this object to construct some explicit homogeneous elements which can span the maximal degree component as well as the minimal degree component of the cocenter of $\RR^\Lam_\alpha$. Moreover, a refined subset of $K$-linear generator for $\Tr(\R[\alpha])$ is also given in this theorem. As an application, Corollary \ref{maincor0} recovers a result of \cite[Theorem 3.31(a)]{SVV} on the degree range of the cocenter $\Tr(\R[\alpha])$. Furthermore, our second main results of this paper Theorems \ref{criterion} and \ref{maincor1} give some criteria for which $\R[\alpha]\neq 0$ (equivalently, $L(\Lam)_{\Lam-\alpha}\neq 0$), where $L(\Lam)$ is the irreducible highest weight module over $\mathfrak{g}$ of highest weight $\Lam$. More precisely, we show in Theorem \ref{criterion} that $\R[\alpha]\neq 0$ if and only if there exists a piecewise dominant sequence $\nu\in I^\alpha$, and construct in Theorem \ref{maincor1} an explicit (nonzero) monomial weight vector in $L(\Lam)_{\Lam-\alpha}$. Furthermore, we show in Lemma \ref{pathlem1} that we can associate each piecewise dominant sequence a crystal path in the crystal graph $\mathcal{B}(\Lam)$ of $L(\Lam)$, and show in Lemma \ref{pathlem2} each vertex in the crystal graph $\mathcal{B}(\Lam)$ can be connected with the highest weight vector $v_\Lam$ with a crystal path associated to some piecewise dominant sequence. We propose in Conjecture \ref{conj1} a refinement of the Indecomposability Conjecture. Our third main result of this paper (Theorem \ref{4mainthm}) verifies Conjecture \ref{conj1} in the case when $K$ is a field of {\it arbitrary} characteristic, $\{Q_{ij}(u,v)|i,j\in I\}$ are given as \cite[\S3.2.4]{Rou2}, $\mathfrak{g}$ is either symmetric and of finite type, or $\mathfrak{g}$ is of type $A_{\infty}$ or affine type $A_{(e-1)}^{(1)}$ with $e>1$.

\bigskip\bigskip
\centerline{Acknowledgements}
\bigskip

The research was supported by the National Natural Science Foundation of China (No. 12171029).
\bigskip

\section{Preliminary}

In this section we shall give some preliminary definitions and results on the quiver Hecke algebras and their cyclotomic quotients. Throughout, unless otherwise stated, we shall assume that $K$ is a field of arbitrary characteristic.

Let $I$ be an index set. An integral square matrix $A = (a_{i,j})_{i,j\in I}$ is called a \emph{symmetrizable generalized Cartan matrix} if it satisfies
\begin{enumerate}
  \item $a_{ii} = 2$, $\forall\, i \in I$;
  \item $a_{ij} \leqslant 0\,\, (i \neq j)$;
  \item $a_{ij} = 0 \Leftrightarrow \, a_{ji} = 0\,\, (i,j \in I)$;
  \item there is a diagonal matrix $D = \diag (d_i\in{\Z}_{>0}\mid i\in I)$ such that $DA$ is symmetric.
\end{enumerate}

A Cartan datum $(A,P,\Pi , P^{\vee} , \Pi^{\vee})$ consists of
\begin{enumerate}
  \item a symmetrizable generalized Cartan matrix $A$;
  \item a free abelian group $P$ of finite rank, called the \emph{weight lattice};
  \item $\Pi = \{ \alpha_i \in P \mid i\in I \}$, called the set of \emph{simple roots};
  \item $P^{\vee} := \Hom (P,{\Z})$, called the \emph{dual weight lattice} and $\<-,-\>: P^{\vee} \times P \to {\Z}$, the natural pairing;
  \item $\Pi^\vee = \{ h_i \mid i \in I \} \subset P^{\vee}$, called the set of \emph{simple coroots};
\end{enumerate}
satisfying the following properties:
\begin{enumerate}
  \item $\langle h_i , \alpha_j \rangle = a_{ij}$ for all $i,j\in I$,
  \item $\Pi$ is linearly independent,
  \item $\forall\, i \in I$, $\exists\,\Lambda_i \in P$ such that $\langle h_j, \Lambda_i \rangle = \delta_{ij}$ for all $j \in I$.
\end{enumerate}

Those $\Lambda_i$ are called the \emph{fundamental weights}. We set
$$P^+ := \{ \Lam \in P \mid \< h_i, \Lam \>\in {\Z}_{\geqslant 0}\text{ for all } i\in I \},$$
which is called the set of \emph{dominant integral weights}. The free abelian group $Q := \oplus_{i\in I}{\Z} \alpha_i$ is called the \emph{root lattice}.
Set $Q^+ = \sum_{i\in I} {\Z}_{\geqslant 0}\alpha_i$. For $\beta = \sum_{i\in I} k_i \alpha_i \in Q^+$, we define the \emph{height} of $\beta$ to be $|\beta| = \sum_{i\in I} k_i$. For each $n\in\mathbb{N}$, we set $$
Q_n^{+} := \{ \beta \in Q^+ \mid |\beta| = n\}. $$
Let $\mathfrak{g}=\mathfrak{g}(A)$ be the corresponding Kac-Moody Lie algebra associated to $A$ with Cartan subalgebra $\mathfrak{h}:= \Q \otimes_{{\Z}}P^\vee$. Since $A$ is symmetrizable, there is a symmetric bilinear form $(,)$ on $\mathfrak{h}^*$ satisfying
\begin{align*}
  & (\alpha_i, \alpha_j) = d_i a_{ij} \quad (i,j \in I) \quad \text{and} \\
  & \< h_i, \Lam \> = \frac{2(\alpha_i,\Lam)}{(\alpha_i,\alpha_i)} \quad \text{for any }\Lam \in \mathfrak{h}^* \text{ and } i\in I.
\end{align*}


Let $u,v$ be two commuting indeterminates over $K$. We fix a matrix $(Q_{i,j})_{i,j\in I}$ in $K[u,v]$ such that
\begin{align*}
  & Q_{i,j}(u,v) = Q_{j,i}(v,u), \quad  Q_{i,i}(u,v) = 0, \\
  & Q_{i,j}(u,v)=\sum_{p,q\geqslant 0} c_{i,j,p,q}\,u^pv^q, \,\,\,\ \text{if}\ i\neq j.
\end{align*}
where $c_{i,j,-a_{ij},0}\in K^\times$, and $c_{i,j,p,q}\neq 0$ only if $2(\alpha_i,\alpha_j)=-(\alpha_i,\alpha_i)p - (\alpha_j,\alpha_j)q$.

Let $\Sym_n = \<s_1 , \ldots , s_{n-1}\>$ be the symmetric group on $\{1,2,\cdots,n\}$, where $s_i = (i,i+1)$ is the transposition on $i,i+1$.
Then $\Sym_n$ acts naturally on $I^n$ by: $$w\nu := (\nu_{w^{-1}(1)},\ldots , \nu_{w^{-1}(n)}),$$
where $\nu = (\nu_1 , \ldots, \nu_n) \in I^n$.
The orbits of this action is identified with element of $Q_n^{+}$. Then $I^\beta:=\{\nu=(\nu_1,\cdots,\nu_n)\in I^n|\sum_{j=1}^{n}\alpha_{\nu_j}=\beta\}$ is the orbit corresponding to $\beta\in Q_n^+$.

\begin{lem}[\text{\cite{KL1},\cite{KL2},\cite{Rou2}}] Let $\beta\in Q_n^+$. The elements in the following set form a $K$-basis of $\RR_\beta$: $$
\bigl\{x_1^{c_1}\cdots x_n^{c_n}\psi_w e(\nu)\bigm|\nu\in I^\beta, w\in\Sym_n, \mathbf{c}_1,\cdots,c_n\in\N\bigr\} .
$$
\end{lem}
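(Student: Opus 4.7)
The plan is standard: show that the stated set spans $\aR[\beta]$ and is linearly independent. For spanning, I would argue that any word in the generators $e(\nu), x_1,\ldots,x_n,\psi_1,\ldots,\psi_{n-1}$ can be rewritten in the normal form $x_1^{c_1}\cdots x_n^{c_n}\psi_w e(\nu)$. The relations $\psi_i e(\nu)=e(s_i\nu)\psi_i$ and $x_j e(\nu)=e(\nu)x_j$ collect the idempotents on the right; the commutation relations between $x_j$ and $\psi_i$ (which produce error terms of strictly shorter $\psi$-length when $j\in\{i,i+1\}$) push all $x$-factors to the left; and the quadratic relation $\psi_i^2 e(\nu)=Q_{\nu_i,\nu_{i+1}}(x_i,x_{i+1})e(\nu)$ together with the braid relations reduce any unreduced expression $\psi_{i_1}\cdots\psi_{i_k}$ to $\psi_w$ for $w=s_{i_1}\cdots s_{i_k}$ modulo polynomial corrections supported on $\psi$'s of strictly shorter length. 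A double induction on $\psi$-length and total $x$-degree then yields the spanning statement.

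For linear independence, I would invoke the standard faithful polynomial representation $V=\bigoplus_{\nu\in I^\beta}K[y_1,\ldots,y_n]\otimes v_\nu$ constructed in \cite{KL1,Rou2}: $e(\mu)$ acts by projection onto the $\mu$-summand, each $x_i$ acts as multiplication by $y_i$, and $\psi_i$ sends the $\nu$-summand to the $s_i\nu$-summand via a divided-difference-type operator tuned to $Q_{\nu_i,\nu_{i+1}}(y_i,y_{i+1})$ (reducing to the nil-Hecke operator when $\nu_i=\nu_{i+1}$). Evaluating the proposed basis element $x_1^{c_1}\cdots x_n^{c_n}\psi_w e(\nu)$ on the vector $1\otimes v_\nu$ lands in the $w\nu$-summand as a polynomial whose leading $y$-monomial, in a term order compatible with the Bruhat length of $w$, depends bijectively on the tuple $(c_1,\ldots,c_n,w)$; the different target summands $v_{w\nu}$ then separate different source residues $\nu$. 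Hence the images are linearly independent in $V$, and therefore so are the elements of $\aR[\beta]$.

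The main obstacle is the verification that the proposed polynomial action actually satisfies all the KLR relations---particularly the $\psi$-braid relation on $e(\nu)$-summands with $\nu_i=\nu_{i+2}\neq \nu_{i+1}$, whose nonzero polynomial defect must match exactly the correction produced by the coefficients of $Q_{ij}(u,v)$. This amounts to a finite but delicate case analysis, carried out in detail in \cite{KL1,Rou2}. With the faithful representation in place, the spanning argument above together with the leading-term bookkeeping complete the proof.
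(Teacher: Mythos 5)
The paper does not prove this lemma itself --- it is quoted directly from \cite{KL1,KL2,Rou2}, so there is no in-paper argument to compare your proposal against. Your spanning argument is the standard rewriting argument from those references and is fine in outline, and the choice to settle independence via the faithful polynomial representation is also the right idea.

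The linear-independence step as you have written it, however, has a genuine gap. You propose acting with $x_1^{c_1}\cdots x_n^{c_n}\tau_w e(\nu)$ on the single vector $1\otimes v_\nu$ and reading off a ``leading $y$-monomial'' in the $w\nu$-summand. But whenever $\nu_i=\nu_{i+1}$, the generator $\tau_i$ acts on the $\nu$-summand as a divided-difference (Demazure) operator $\partial_i$, and $\partial_i(1)=0$. Hence for any $w$ with a reduced word whose first letter is such an $s_i$ --- and more broadly for a large proportion of $w$ whenever $\nu$ has a repeated entry, which is the generic situation --- one has $\tau_w e(\nu)(1\otimes v_\nu)=0$, so the ``leading monomial'' you want to extract simply does not exist and the claimed bijection $(c_1,\ldots,c_n,w)\mapsto\text{leading term}$ is not even defined. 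The actual argument in \cite{KL1,Rou2} does not evaluate the candidate basis elements on a single fixed vector. It regards them as \emph{operators} on the polynomial module $V$, filters $\End_K(V)$ by the $\Sym_n$-length of the underlying difference/permutation operator, and shows that the top symbol of $x_1^{c_1}\cdots x_n^{c_n}\tau_w e(\nu)$ in the associated graded is, up to a unit, $y_1^{c_1}\cdots y_n^{c_n}\circ D_w$ where $D_w$ is a composite of $\partial_i$'s and twisted permutations determined by $w$ and $\nu$; these top symbols are linearly independent as operators on $K[y_1,\ldots,y_n]$ by the classical independence of $\{\partial_w\}_{w\in\Sym_n}$ and $\{w\}_{w\in\Sym_n}$. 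It is that operator-level filtration argument (or, alternatively, evaluation on a carefully chosen \emph{family} of polynomials adapted to $\nu$ and $w$), not a one-vector evaluation, that closes the independence step.
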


\begin{dfn}\label{KLR}
 Let $\beta\in Q_n^+$. The quiver Hecke algebra $\RR_\beta$ associated with a Cartan datum $(A,P,\Pi,P^{\vee},\Pi^{\vee})$, $(Q_{i,j})_{i,j\in I}$ and $\beta \in Q_n^+$ is the associative algebra over $K$ generated by $e(\nu)\, (\nu\in I^\beta)$, $x_k\, (1\leqslant k \leqslant n)$, $\tau_l \, (1\leqslant l \leqslant n-1)$ satisfying the following defining relations:
  \begin{equation*}
    \begin{aligned}
      & e(\nu) e(\nu') = \delta_{\nu, \nu'} e(\nu), \ \
      \sum_{\nu \in I^{\beta}}  e(\nu) = 1, \\
      & x_{k} x_{l} = x_{l} x_{k}, \ \ x_{k} e(\nu) = e(\nu) x_{k}, \\
      & \tau_{l} e(\nu) = e(s_{l}(\nu)) \tau_{l}, \ \ \tau_{k} \tau_{l} = \tau_{l} \tau_{k} \ \ \text{if} \ |k-l|>1, \\
      & \tau_{k}^2 e(\nu) = Q_{\nu_{k}, \nu_{k+1}} (x_{k}, x_{k+1})e(\nu), \\
      & (\tau_{k} x_{l} - x_{s_k(l)} \tau_{k}) e(\nu) = \begin{cases}
      -e(\nu) \ \ & \text{if} \ l=k, \nu_{k} = \nu_{k+1}, \\
      e(\nu) \ \ & \text{if} \ l=k+1, \nu_{k}=\nu_{k+1}, \\
      0 \ \ & \text{otherwise},
      \end{cases} \\[.5ex]
      & (\tau_{k+1} \tau_{k} \tau_{k+1}-\tau_{k} \tau_{k+1} \tau_{k}) e(\nu)\\
      &\hspace*{8ex} =\begin{cases} \dfrac{Q_{\nu_{k}, \nu_{k+1}}(x_{k},
      x_{k+1}) - Q_{\nu_{k+2}, \nu_{k+1}}(x_{k+2}, x_{k+1})}{x_{k} - x_{k+2}}e(\nu) \ \ & \text{if} \
      \nu_{k} = \nu_{k+2}, \\
      0 \ \ & \text{otherwise}.
      \end{cases}
    \end{aligned}
  \end{equation*}
\end{dfn}

In particular, $\RR_0 \cong K$, and $\RR_{\alpha_i}$ is isomorphic to $K[x_1]$.
For any $\beta\in Q_n^+$ and $i\in I$, we set $$
e(\beta, i) = \operatornamewithlimits{\sum}\limits_{\nu=(\nu_1,\cdots,\nu_n)\in I^{\beta}}e(\nu_1,\cdots,\nu_n,i)\in\RR_{\beta+\alpha_i}. $$

The algebra $\RR_\beta$ is ${\Z}$-graded whose grading is given by
\begin{equation*}
  \deg e(\nu) = 0, \qquad \deg x_k e(\nu) = (\alpha_{\nu_k},\alpha_{\nu_k}), \qquad \deg \tau_l e(\nu) = - (\alpha_{\nu_l},\alpha_{\nu_{l+1}}).
\end{equation*}

Let $\Lambda \in P^+$ be a dominant integral weight. We now recall the cyclotomic quiver Hecke algebra $\RR_\beta^\Lambda$.
For $1\leqslant k \leqslant n$, we define
\begin{equation*}
  a^\Lambda_\beta (x_k) = \sum_{\nu\in I^\beta} x_k^{\<h_{\nu_k},\Lam\>}e(\nu) \in \RR_\beta.
\end{equation*}

\begin{dfn}\label{cyclotomicKLR}
  Set $I_{\Lambda,\beta} = \RR_\beta a^\Lambda_\beta (x_1) \RR_\beta$. The {cyclotomic quiver Hecke algebra} $\RR^\Lambda_\beta$ is defined to be the quotient algebra:   $$
  \RR_\beta^\Lambda = \RR_\beta/I_{\Lambda,\beta}. $$
\end{dfn}
Sometimes we shall write $\R[\beta](K)$ instead of $\R[\beta]$ in order to emphasize the ground field $K$. In general, if $\O$ is a commutative ring and
$Q_{ij}(u,v)\in\O[u,v]$ for any $i,j\in I$, then we can define the cyclotomic quiver Hecke algebra $\R[\beta](\O)$ over $\O$.

For any $K$-algebra $A$, we define the center $Z(A):=\{a\in A|ax=xa,\forall\,x\in A\}$, and define the cocenter $\Tr(A)$ of $A$ to be the $K$-linear space $\Tr(A):=A/[A,A]$, where $[A,A]$ is the $K$-subspace of $A$ generated by all commutators of the form $xy-yx$ for $x,y\in A$. Note that $\Tr(A)$ is the $0$-th Hochschild homology ${\rm H\!H}_0(A)$ of $A$, while $Z(A)$ is the $0$-th Hochschild cohomology ${\rm H\!H}^0(A)$ of $A$.

Let $B$ be a $K$-algebra with an algebra homomorphism $i: B\rightarrow A$. Then $A$ naturally becomes a $(B,B)$-bimodule. For any $f\in\{a\in A|ab=ba,\forall\,b\in B\}$, we define (following \cite[(1)]{SVV}) $$
\mu_f: A\otimes_{B}A\rightarrow A,\quad \sum_{(a)}a_1\otimes a_2\rightarrow \sum_{(a)}a_1fa_2 .
$$

\begin{dfn} Let $\Lam\in P^+$, $\beta\in Q_n^+$ and $\nu=(\nu_1,\cdots,\nu_n)\in I^\beta$. We define $$
d_{\Lambda ,\beta}:=2(\Lam,\beta)-(\beta,\beta).
$$
For any $1\leq k\leq n$, we set $$
 \lambda_{k-1,\nu_k}=\<h_{\nu_k},\Lambda-\sum_{j=1}^{k-1}\alpha_{\nu_k}\>.
$$
\end{dfn}

Let $\alpha\in Q_n^+$, $z\in\RR_{\alpha}$ and $m>n$. By convention, we shall often abbreviate the element $$z\sum_{i_{n+1},\cdots,i_m\in I}e(\alpha,i_{n+1},\cdots,i_m)\in
\RR_{m}:=\oplus_{\beta\in Q_m^+}\RR_\beta$$ as $z$. The same convention is also adopted for elements in $\R[\alpha]$. The following result of Kang-Kashiwara will be used in  the proof of the main results in this paper.

\begin{lem}[\text{\rm \cite[Theorem 5.2]{KK}, \cite[(6),(7)]{SVV}}]\label{KK1} Let $\alpha\in Q_n^+$, $i\in I$ and $z\in e(\alpha,i)\R[\alpha,i]e(\alpha,i)$. \begin{enumerate}
\item[1)] If $\lam_{n,i}\geq 0$, then there are unique elements $\pi(z)\in \R[\alpha]e(\alpha-\alpha_i,i)\otimes_{\R[\alpha-\alpha_i]}e(\alpha-\alpha_i,i)\R[\alpha]$ and
$p_k(z)\in\R[\alpha]$ such that\footnote{We remark that there is a typo in \cite[(6)]{SVV}, where the element ``$e(\alpha,i)$'' was missing in the second term of the righthand side of (\ref{geq0}).} \begin{equation}\label{geq0}
z=\mu_{\tau_n}(\pi(z))+\sum_{k=0}^{\lam_{n,i}-1}p_k(z)x_{n+1}^ke(\alpha,i);
\end{equation}
\item[2)] If $\lam_{n,i}\leq 0$, then there is a unique element $\widetilde{z}\in \R[\alpha]e(\alpha-\alpha_i,i)\otimes_{\R[\alpha-\alpha_i]}e(\alpha-\alpha_i,i)\R[\alpha]$ and
$p_k(z)\in\R[\alpha]$ such that  \begin{equation}\label{leq0}
z=\mu_{\tau_n}(\widetilde{z}),\quad \mu_{x_n^k}(\widetilde{z})=0,\,\,\forall\,0\leq k\leq -\lam_{n,i}-1.
\end{equation}
\end{enumerate}
\end{lem}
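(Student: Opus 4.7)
The plan is to derive Lemma \ref{KK1} from two short exact sequences of $(\R[\alpha],\R[\alpha])$-bimodules that categorify the $\mathfrak{sl}_2$-commutator at weight $\lam_{n,i}$. Put $B:=e(\alpha,i)\R[\alpha,i]e(\alpha,i)$ and $C:=\R[\alpha]e(\alpha-\alpha_i,i)\otimes_{\R[\alpha-\alpha_i]}e(\alpha-\alpha_i,i)\R[\alpha]$; both are $(\R[\alpha],\R[\alpha])$-bimodules via the inclusion $\R[\alpha]\hookrightarrow\R[\alpha,i]$ on the first $n$ strands. The KLR relations immediately show that $\tau_n$ commutes with the copy of $\R[\alpha-\alpha_i]$ sitting on strands $1,\dots,n-1$, so $\mu_{\tau_n}$ descends to a bimodule map $C\to B$.

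The goal is to prove exactness of
\begin{equation*}
0\to C\xrightarrow{\mu_{\tau_n}} B\to\bigoplus_{k=0}^{\lam_{n,i}-1}\R[\alpha]\to 0\qquad(\lam_{n,i}\geq 0)
\end{equation*}
and
\begin{equation*}
0\to\bigoplus_{k=0}^{-\lam_{n,i}-1}\R[\alpha]\to C\xrightarrow{\mu_{\tau_n}} B\to 0\qquad(\lam_{n,i}\leq 0),
\end{equation*}
where in the first the right-hand map reads off the coefficients $p_k(z)$ of $x_{n+1}^k e(\alpha,i)$ modulo $\im(\mu_{\tau_n})$, and in the second the left-hand map sends the $k$th basis vector to $x_n^k e(\alpha-\alpha_i,i)\otimes e(\alpha-\alpha_i,i)$. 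Lemma \ref{KK1} then drops out: in case (1) any $z\in B$ has a unique expression $\mu_{\tau_n}(\pi(z))+\sum_k p_k(z)x_{n+1}^k e(\alpha,i)$, while in case (2) surjectivity of $\mu_{\tau_n}$ produces a preimage $\widetilde z$, uniquely normalised by the orthogonality relations $\mu_{x_n^k}(\widetilde z)=0$ for $0\leq k\leq-\lam_{n,i}-1$ (these carve out a canonical complement to the kernel of $\mu_{\tau_n}$ on the injection side).

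The principal obstacle will be exactness in the middle — the injectivity of $\mu_{\tau_n}$ in case (1) and its surjectivity in case (2). Neither is visible from the KLR relations alone; both depend crucially on the cyclotomic condition $a^\Lam_{\alpha+\alpha_i}(x_1)=0$. The easier halves of the sequences (the existence of the splitting projection in (1) and the splitting injection in (2)) can be verified by a straightening argument on the KLR basis of $\R[\alpha,i]$, peeling off elements that involve $\tau_n$ and reducing leading powers of $x_{n+1}$ via the cyclotomic ideal. For the hard half I would follow Kang-Kashiwara: use the biadjunction between $i$-restriction and $i$-induction to trade exactness on one side of the sequence for exactness on the other, and then perform an inductive check on graded dimensions, using the closed formula of \cite{HS} as a numerical anchor to rule out any residual kernel or cokernel.
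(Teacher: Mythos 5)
The paper cites this statement directly from Kang--Kashiwara (Theorem 5.2) and Shan--Varagnolo--Vasserot ((6),(7)) and does not reprove it, so there is no in-paper argument to compare against. Your bimodule-exact-sequence framework is the correct one and matches the structure of Kang--Kashiwara's Theorem 5.2: the map $\mu_{\tau_n}\colon C\to B$ fits into a short exact sequence of $(\R[\alpha],\R[\alpha])$-bimodules whose third term is a free $\R[\alpha]$-module of rank $|\lam_{n,i}|$, sitting on the right when $\lam_{n,i}\geq 0$ and on the left when $\lam_{n,i}\leq 0$; the lemma is indeed just a pointwise restatement of these sequences together with the canonical splittings you describe.

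The problem is the final paragraph. Your plan for the ``hard half'' --- proving injectivity of $\mu_{\tau_n}$ when $\lam_{n,i}\geq 0$ and surjectivity when $\lam_{n,i}\leq 0$ --- invokes two tools that are logically downstream of the very result you are trying to prove. First, the biadjointness of $E_i$ and $F_i$ on the cyclotomic quotients is not an input to Kang--Kashiwara's proof of their Theorem 5.2; it is a \emph{consequence} of it. KK establish the exact sequences first by a direct algebraic argument, and derive the cyclotomic categorification (including biadjointness of the restriction/induction functors on $\R[\alpha]\lmod$) afterwards. Second, the graded dimension formula of \cite{HS} (Lemma \ref{HS1} here) is itself proved using the Kang--Kashiwara categorification, hence relies on the present lemma; invoking it as a ``numerical anchor'' closes a circle. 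The actual proof in \cite{KK} is intrinsic to the algebras: one proves by induction on $n=|\alpha|$ that the intertwiner $P\colon e(\alpha,i)\R[\alpha,i]e(\alpha,i)\to C\oplus\bigoplus_k\R[\alpha]$ built from $\tau_n$ and the reduction-mod-cyclotomic-ideal maps is an isomorphism, using the polynomial structure of $e(\alpha,i)\R[\alpha+\alpha_i]e(\alpha,i)$ as an $\R[\alpha][x_{n+1}]$-module and the cyclotomic relation $a^\Lam_{\alpha+\alpha_i}(x_1)=0$ to control the rank; no categorical adjunction or external dimension count enters. To repair your sketch you would need to replace the last paragraph with this hands-on inductive argument (or simply cite \cite{KK} for the exactness, as the paper does).
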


Let $\alpha\in Q_n^+$, $\nu\in I^\alpha$ and $i\in I$. Following \cite[Theorem 3.8]{SVV}, we define $$\begin{aligned}
\hat{\eps}'_{i,\Lam-\alpha}:\quad e(\alpha,i)\R[\alpha+\alpha_i]e(\alpha,i)&\rightarrow \R[\alpha]\\
z&\mapsto\begin{cases} p_{\lam_{n,i}-1}(z), &\text{if $\lam_{n,i}>0$;}\\
\mu_{x_n^{-\lam_{n,i}}}(\widetilde{z}), &\text{if $\lam_{n,i}\leq 0$.}
\end{cases}
\end{aligned}
$$
Let $r_\nu\in K^\times$ be defined as \cite[(62)]{SVV}. For any $\nu,\nu'\in I^\alpha$, $z\in\R[\alpha]$, we define $$
t_{\Lam,\alpha}(e(\nu)ze(\nu')):=\begin{cases} 0, &\text{if $\nu\neq \nu'$;}\\
r_{\nu}\hat{\eps}_{n,\nu_n}\hat{\eps}_{n-1,\nu_{n-1}}\cdots \hat{\eps}_{1,\nu_1}\bigl(e(\nu)ze(\nu')\bigr), &\text{if $\nu=\nu'$,}\end{cases}
$$
where $\hat{\eps}_{k,\nu_k}$ is a map $$
\hat{\eps}_{k,\nu_k}: e(\nu_1,\cdots,\nu_k)\R[\alpha-\sum_{j=k+1}^{n}\alpha_{\nu_j}]e(\nu_1,\cdots,\nu_k)\rightarrow e(\nu_1,\cdots,\nu_{k-1})\R[\alpha-\sum_{j=k}^{n}\alpha_{\nu_j}]e(\nu_1,\cdots,\nu_{k-1}),
$$
which is the restriction of $\hat{\eps}'_{\nu_k,\Lam-\sum_{j=1}^{k-1}\alpha_{\nu_j}}$. Note that the map $\hat{\eps}_{k,\nu_k}$ was denote by $\hat{\eps}_{\nu_k}$ in \cite[A.3]{SVV}. We extend $t_{\Lam,\alpha}$ linearly to a $K$-linear map $t_{\Lam,\alpha}: \R[\alpha]\rightarrow K$.

\begin{lem}[\text{\rm \cite[Proposition 3.10]{SVV}}]\label{tLam} The map $t_{\Lam,\alpha}: \R[\alpha]\rightarrow K$ is a homogeneous symmetrizing form on $\R[\alpha]$ of degree $-d_{\Lam,\alpha}$.
\end{lem}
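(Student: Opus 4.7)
My plan is to prove Lemma \ref{tLam} by induction on $n=|\alpha|$, peeling off the last residue of $\nu$ at each step via Kang-Kashiwara's Lemma \ref{KK1}. The base case $n=0$ is immediate since $\R[0]\cong K$. For the inductive step, observe that by construction $t_{\Lam,\alpha}$ vanishes on all off-diagonal blocks $e(\nu)\R[\alpha]e(\nu')$ with $\nu\ne\nu'$, and on each diagonal block equals $r_\nu$ times the iterated composition $\hat\eps_{1,\nu_1}\circ\cdots\circ\hat\eps_{n,\nu_n}$ of the partial trace maps defined via $\hat\eps'_{i,\Lam-\alpha}$. Consequently all three required properties (homogeneity, trace, non-degeneracy) reduce to analogous statements about the single map $\hat\eps'_{i,\Lam-\alpha}:e(\alpha,i)\R[\alpha+\alpha_i]e(\alpha,i)\to\R[\alpha]$, which is then iterated $n$ times.

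For the degree, in the case $\lam_{n,i}>0$, formula (\ref{geq0}) forces $\deg p_{\lam_{n,i}-1}(z)=\deg z - (\lam_{n,i}-1)(\alpha_i,\alpha_i)$, since the two terms on the right-hand side must each have total degree $\deg z$ and $\deg\(x_{n+1}^{\lam_{n,i}-1}e(\alpha,i)\)=(\lam_{n,i}-1)(\alpha_i,\alpha_i)$. In the case $\lam_{n,i}\le 0$, uniqueness in (\ref{leq0}) together with the degrees of $\mu_{\tau_n}$ and $\mu_{x_n^{-\lam_{n,i}}}$ pins down the shift; in both cases the net shift of $\hat\eps'_{i,\Lam-\alpha}$ works out to $d_{\Lam,\alpha}-d_{\Lam,\alpha+\alpha_i}$, and composing $n$ such maps telescopes to $-d_{\Lam,\alpha}$.

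For the trace and non-degeneracy, I would use that Kang-Kashiwara's unique decompositions (\ref{geq0})--(\ref{leq0}) are precisely the Mackey-type bimodule isomorphisms realizing $\hat\eps'_{i,\Lam-\alpha}$ as the counit (resp.\ unit) of the biadjunction between the $i$-restriction functor $E_i$ and the $i$-induction functor $F_i$. Under this identification, the trace property of $t_{\Lam,\alpha}$ follows by functoriality: a trace on $\R[\alpha+\alpha_i]$ pulls back to a trace on $\R[\alpha]$ along the counit of a Frobenius extension. Non-degeneracy is transported in the opposite direction: given $0\ne a\in e(\nu)\R[\alpha]e(\nu)$, the uniqueness of $\pi(z)$ and $\widetilde{z}$ in Lemma \ref{KK1}, combined with the inductive non-degeneracy of $t_{\Lam,\alpha-\alpha_{\nu_n}}$, produces an explicit $b\in\R[\alpha]$ with $t_{\Lam,\alpha}(ab)\ne 0$. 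The scalars $r_\nu\in K^\times$ from \cite[(62)]{SVV} are precisely the normalizing constants that make these units/counits consistent as $\nu$ varies over $I^\alpha$.

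The hard part will be the non-degeneracy step. While the degree shift and the trace identity each follow from a single inductive argument plus the biadjunction machinery, non-degeneracy requires concrete use of the uniqueness assertions in Lemma \ref{KK1} together with the choice of $r_\nu$; moreover, one must check that each off-diagonal block $e(\nu)\R[\alpha]e(\nu')$ (on which $t_{\Lam,\alpha}$ itself vanishes) is paired non-degenerately with the opposite block $e(\nu')\R[\alpha]e(\nu)$ under the induced bilinear form $(a,b)\mapsto t_{\Lam,\alpha}(ab)$, which is exactly what promotes the trace form from merely symmetric to symmetrizing.
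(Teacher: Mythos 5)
The paper does not prove Lemma \ref{tLam}; it is quoted verbatim as \cite[Proposition 3.10]{SVV}, so there is no internal proof to compare against. On its own terms, your sketch correctly identifies the framework underlying the SVV argument: $t_{\Lam,\alpha}$ is an iterated composition of partial trace maps $\hat\eps_{k,\nu_k}$, each built from the Kang--Kashiwara decomposition in Lemma \ref{KK1}, and the degree computation you give is correct — the per-step shift $-(\lam_{n,i}-1)(\alpha_i,\alpha_i)$ does telescope to $-d_{\Lam,\alpha}$ since $d_{\Lam,\alpha+\alpha_i}-d_{\Lam,\alpha}=(\lam_{n,i}-1)(\alpha_i,\alpha_i)$.

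The real gap is your treatment of the trace identity $t_{\Lam,\alpha}(ab)=t_{\Lam,\alpha}(ba)$. You assert that it ``follows by functoriality: a trace on $\R[\alpha+\alpha_i]$ pulls back to a trace on $\R[\alpha]$ along the counit of a Frobenius extension,'' but $\hat\eps'_{i,\Lam-\alpha}$ is not an algebra map, and the two factors $a,b$ are not split apart consistently under a single partial trace: when $a\in e(\nu)\R[\alpha]e(\mu)$ and $b\in e(\mu)\R[\alpha]e(\nu)$ with $\nu\ne\mu$, the two products $ab$ and $ba$ live in blocks indexed by different residue sequences, so the corresponding compositions of $\hat\eps$-maps peel in different orders and acquire \emph{a priori} unrelated normalizations. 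This is exactly where the scalars $r_\nu$ from \cite[(62)]{SVV} enter — they are chosen so that the diagonal pieces of $t_{\Lam,\alpha}$ assembled from different peeling orders agree — and establishing this compatibility (essentially the cyclicity of the biadjunction $E_i\dashv F_i\dashv E_i$) is the technical core of SVV's proof of Proposition 3.10. Your outline treats it as automatic, which it is not; in particular, identifying the scalar $r_\nu$ explicitly and verifying that it renders the form symmetric across off-diagonal block pairs would require reproducing several pages of SVV's bubble-and-cap computations. You also identify non-degeneracy as ``the hard part,'' but once the trace identity is in hand, non-degeneracy follows more cheaply from the uniqueness clauses in Lemma \ref{KK1} together with the inductive hypothesis; the genuinely hard step is the one you pass over.
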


In \cite{HS}, we have obtained some closed formulae for the graded dimension of the cyclotomic quiver Hecke algebra $\R[\beta]$ of arbitrary type.

\begin{lem}[\text{\rm \cite[Theorem 1.1]{HS}}]\label{HS1}  Let $\beta\in Q_n^+$ and $\nu=(\nu_1,\cdots,\nu_n),\nu'=(\nu'_1,\cdots,\nu'_n)\in I^\beta$. Then $$
\dim_q e(\nu)\RR^\Lam_{\beta}e(\nu')=\sum_{\substack{w\in\Sym(\nu,\nu')}}\prod_{t=1}^{n}\Bigl([N^{\Lam}(w,\nu,t)]_{\nu_t}
q_{\nu_t}^{N^{\Lam}(1,\nu,t)-1}\Bigr).
$$
where $\Sym(\nu,\nu'):=\{w\in\Sym_n|w\nu=\nu'\}$, $q_{\nu_t}:=q^{d_{\nu_t}}$, $[m]_{\nu_t}$ is the quantum integer (\cite[(2.1)]{HS}), $N^{\Lam}(w,\nu,t)$ is defined as follows: $$
N^\Lam(w,\nu,t):=\<h_{\nu_t}, \Lam-\sum_{j\in J_w^{<t}}\alpha_{\nu_j}\>,\quad
J_w^{<t}:=\{1\leq j<t|w(j)<w(t)\}.
$$
\end{lem}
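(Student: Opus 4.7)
The plan is to prove the formula by induction on $n = |\beta|$. The base cases are direct: $\R[0] \cong K$ gives graded dimension $1$ (empty product), while $\R[\alpha_i] \cong K[x_1]/(x_1^{\<h_i,\Lam\>})$ has graded dimension $[\<h_i,\Lam\>]_i q_i^{\<h_i,\Lam\>-1}$, matching the formula evaluated at $w = 1$ (note $[N]_{q_i}\,q_i^{N-1} = 1 + q_i^2 + \cdots + q_i^{2(N-1)}$, which is exactly the graded dimension of $K[x]/(x^N)$ with $\deg x = 2d_i$).

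For the inductive step, I would fix $i \in I$, write $\nu = (\mu, i),\ \nu' = (\mu', i) \in I^{\beta+\alpha_i}$ with $\mu,\mu' \in I^\beta$, and peel off the last strand. The central tool is Lemma \ref{KK1} (Kang--Kashiwara), which realises $e(\beta,i)\,\R[\beta+\alpha_i]\,e(\beta,i)$ as an extension of two graded $(\R[\beta],\R[\beta])$-bimodules: a ``quotient piece'' isomorphic (in the case $\lam_{n,i}\ge 0$) to $\bigoplus_{k=0}^{\lam_{n,i}-1}\R[\beta]\,x_{n+1}^k$, and a ``sub-piece'' isomorphic to $\R[\beta]e(\beta-\alpha_i,i)\otimes_{\R[\beta-\alpha_i]}e(\beta-\alpha_i,i)\R[\beta]$, with appropriate grading shift (the case $\lam_{n,i}\le 0$ being dual). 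Multiplying by $e(\mu,i)$ on the left and $e(\mu',i)$ on the right, then taking $\dim_q$, produces a recursion expressing the left-hand side of the formula in terms of $\dim_q e(\mu)\R[\beta]e(\mu')$ together with graded dimensions of $(\R[\beta-\alpha_i],\R[\beta-\alpha_i])$-bimodules that are known by induction.

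The combinatorial crux is then to verify that the right-hand side of the formula satisfies the same recursion. I would split the sum over $w \in \Sym(\nu, \nu')$ according to whether $w(n+1) = n+1$. Permutations fixing $n+1$ are in bijection with elements of $\Sym(\mu, \mu')$; for them the $t = n+1$ factor equals $[\lam_{n,i}]_i\, q_i^{\lam_{n,i}-1}$ and the residual product is exactly the formula for $\dim_q e(\mu)\R[\beta]e(\mu')$ by induction, thus accounting for the quotient piece. Permutations with $w(n+1) < n+1$ correspond to ``inserting'' the last strand at an earlier position $k$; their contributions should reassemble into the graded dimension of the sub-piece after bookkeeping the $w$-dependent counts $N^\Lam(w,\nu,t)$ and the $w$-independent shifts $q_{\nu_t}^{N^\Lam(1,\nu,t)-1}$.

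The main obstacle is precisely this bookkeeping: when the last strand is inserted at position $k$, the counts $N^\Lam(w,\nu,t)$ for $t > k$ change in a controlled way depending on whether $\nu_t = i$, and one must reorganise the product so that it factors as a product of two dimension formulae of strictly smaller rank that precisely matches the tensor decomposition over $\R[\beta-\alpha_i]$. A cleaner alternative, if available, would be to construct directly a homogeneous $K$-basis of $e(\nu)\R[\beta]e(\nu')$ indexed by pairs $(w, \mathbf{c})$ with $w \in \Sym(\nu,\nu')$ and exponents $0 \le c_t < N^\Lam(w,\nu,t)$: spanning follows from the Khovanov--Lauda monomial basis of $\RR_\beta$ together with the cyclotomic relations, while linear independence is forced by matching the total graded dimension $\dim_q \R[\beta] = \sum_{\nu,\nu'}\dim_q e(\nu)\R[\beta]e(\nu')$ with the value obtained independently from the Kang--Kashiwara recursion on the whole algebra.
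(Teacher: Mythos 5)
The paper itself does not prove this lemma: it is stated as a verbatim citation of \cite[Theorem 1.1]{HS}, so the only possible comparison is with that source. Your overall strategy --- induction on $n$, peeling off the last strand via Lemma~\ref{KK1}, splitting $e(\beta,i)\R[\beta+\alpha_i]e(\beta,i)$ into the polynomial (``quotient'') piece and the tensor (``sub'') piece, and matching the right-hand side by separating permutations according to $w^{-1}(n+1)$ --- is the natural one, and your base-case computation is correct. The monomial-basis-plus-dimension-count alternative you sketch is likewise a genuine route (and corresponds to the companion monomial basis result in \cite{HS}).

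That said, two gaps keep this from being a proof. First, you write $\nu=(\mu,i)$ and $\nu'=(\mu',i)$ without justification, but Lemma~\ref{KK1} only decomposes elements of $e(\alpha,i)\R[\alpha+\alpha_i]e(\alpha,i)$, i.e.\ the case where \emph{both} residue sequences end in the same $i$. The lemma you are trying to prove asserts the formula for \emph{all} pairs $\nu,\nu'\in I^\beta$, including those with $\nu_n\neq\nu'_n$; for such pairs the Kang--Kashiwara decomposition does not apply directly, and the off-diagonal components $e(\nu)\R[\beta]e(\nu')$ need a separate argument (e.g.\ first working with the full one-sided bimodule $\R[\beta]e(\beta-\alpha_i,i)$, or transporting via intertwiners with the attendant degree bookkeeping). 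Second, the combinatorial verification you call ``the main obstacle'' is not a routine check but is in fact the bulk of the argument: the exponents $N^\Lam(w,\nu,t)$ shift under insertion of the last strand in a way that depends on how many earlier residues equal $i$, the $w$-independent factors $q_{\nu_t}^{N^\Lam(1,\nu,t)-1}$ must be tracked separately from the $w$-dependent quantum integers, and the graded dimension of the tensor piece $\R[\beta]e(\beta-\alpha_i,i)\otimes_{\R[\beta-\alpha_i]}e(\beta-\alpha_i,i)\R[\beta]$ is not a simple product or quotient of dimensions --- one needs the Kang--Kashiwara freeness result for $\R[\beta]e(\beta-\alpha_i,i)$ as a right $\R[\beta-\alpha_i]$-module to compute it. As written, you have a correct and sensible plan, not a proof.
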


The above lemma shows that the dimension of $\R[\alpha](K)$ depends only on the root system associated to $A$ and the dominant weight $\Lam$, but not on the chosen ground field $K$ and the polynomials $Q_{ij}(u,v)$. This implies that if each $Q_{ij}(u,v)$ is defined over $\Z$ then $\R[\beta](\Z)$ is free over $\Z$, and hence $\O\otimes_{\Z}\R[\beta](\Z)\cong\R[\beta](\O)$ for any commutative ground ring $\O$. Thus we recover the following result of Ariki, Park and Speyer.

\begin{cor}[\text{\rm \cite[Proposition 2.4]{APS}}]\label{free} Suppose that each $Q_{ij}(u,v)$ is defined over $\Z$. For any commutative ground ring $\O$, the cyclotomic quiver Hecke algebra $\R[\beta](\O)$ is a free $\O$-module of finite rank.
\end{cor}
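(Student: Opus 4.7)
The plan is to first show that $\R[\beta](\Z)$ is a finitely generated free $\Z$-module, and then deduce the general case by a routine base change. The essential input is the universality of the graded dimension formula in Lemma \ref{HS1}, together with the structure theorem for finitely generated abelian groups.

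Since $Q_{ij}(u,v)\in\Z[u,v]$ by hypothesis, all defining relations in Definitions \ref{KLR} and \ref{cyclotomicKLR} have integer coefficients, so $\R[\beta](\Z)$ is well-defined. Moreover, for any commutative ring $\O$ there is a canonical isomorphism $\R[\beta](\O)\cong\O\otimes_{\Z}\R[\beta](\Z)$, since both sides present the same algebra by generators and relations over $\O$ (tensor product commutes with quotients by a two-sided ideal). It therefore suffices to establish the integral statement.

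For finite generation, the PBW-style lemma preceding Definition \ref{KLR} supplies the spanning set $\{x_1^{c_1}\cdots x_n^{c_n}\psi_w e(\nu)\}$ for $\RR_\beta(\Z)$, which descends to one for $\R[\beta](\Z)$. Modulo the cyclotomic ideal $I_{\Lam,\beta}$, each $x_k$ acts nilpotently on every $e(\nu)$: the relation $x_1^{\<h_{\nu_1},\Lam\>}e(\nu)=0$ propagates to the other positions by commuting with the $\tau_l$'s via the KLR relations in Definition \ref{KLR}. Hence only finitely many tuples $(c_1,\ldots,c_n)$ contribute, and $\R[\beta](\Z)$ is a finitely generated $\Z$-module. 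This finite-generation step, which hinges on the nilpotency of the $x_k$'s in the cyclotomic quotient, is the main point needing care; the remainder of the argument is formal.

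Let $N$ denote the common value of $\dim_K\R[\beta](K)$ over all fields $K$, given explicitly by the $q=1$ specialization of the formula in Lemma \ref{HS1}, which depends only on $A$, $\Lam$, $\beta$. By the structure theorem, write $\R[\beta](\Z)\cong\Z^r\oplus T$ with $T$ a finite torsion group. Tensoring with $\Q$ gives $r=\dim_{\Q}\R[\beta](\Q)=N$. Tensoring with $\mathbb{F}_p$ gives
$$N=\dim_{\mathbb{F}_p}\R[\beta](\mathbb{F}_p)=r+\dim_{\mathbb{F}_p}(T/pT)=N+\dim_{\mathbb{F}_p}(T/pT),$$
so $T/pT=0$ for every prime $p$, forcing $T=0$. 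Therefore $\R[\beta](\Z)\cong\Z^N$ is $\Z$-free of rank $N$, and base-changing to $\O$ yields $\R[\beta](\O)\cong\O^N$, as required.
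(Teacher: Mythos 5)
Your proposal is correct and follows the same blueprint as the paper, which only sketches the argument in the paragraph preceding the corollary: reduce to $\O=\Z$ by presenting $\R[\beta](\O)$ as the same generators and relations base-changed from $\Z$, observe that the graded dimension over any field is the same by Lemma~\ref{HS1}, and conclude freeness over $\Z$ by the structure theorem. Your structure-theorem step is complete and correct, and the base-change identification $\R[\beta](\O)\cong\O\otimes_{\Z}\R[\beta](\Z)$ is a valid application of right-exactness of $\O\otimes_{\Z}-$ to the presentation.

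The one place that merits more than a sketch is the finite generation of $\R[\beta](\Z)$. The nilpotency of $x_k e(\nu)$ in the cyclotomic quotient is a genuine theorem rather than an observation --- the paper itself cites \cite[Corollary~4.4]{KK} for finite-dimensionality over a field --- and ``propagates to the other positions by commuting with the $\tau_l$'s'' is really only a heuristic, since the commutation relations $\tau_l x_{l+1}e(\nu)-x_l\tau_l e(\nu)\in\{0,\pm e(\nu)\}$ trade $x$-degree for $\tau$-factors, and turning this into an actual bound takes work. You flag this yourself, which is fair, and citing \cite{KK} (whose statement holds over a general base ring) would close it. Alternatively, one can avoid nilpotency altogether by working degree by degree: each homogeneous piece $\RR_\beta(\Z)_d$ is already a finitely generated free $\Z$-module by the PBW basis (the $x_k$ have positive degree, so only finitely many basis monomials occupy any fixed degree), hence each $\R[\beta](\Z)_d$ is a finitely generated quotient; by Lemma~\ref{HS1} the degrees for which $\R[\beta](K)_d\neq 0$ form one fixed finite set $D$ independent of the field $K$, and a finitely generated abelian group $M$ with $M\otimes\Q=0$ and $M\otimes\mathbb{F}_p=0$ for all $p$ must vanish, so $\R[\beta](\Z)_d=0$ for $d\notin D$. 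That makes the whole argument elementary and self-contained.
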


For any $A_1,\cdots,A_p\in\R[\alpha]$, we define the ordered product: $$
\overrightarrow{\prod\limits_{1\leq i\leq p}}A_i:=A_1A_2\cdots A_p .
$$

\bigskip

\section{Relations and $K$-linear generators of the cocenter}

In this section we shall investigate some relations inside the cocenter $\Tr(\R[\alpha])$ with a purpose of looking for some normal forms for the $K$-linear generators of the cocenter $\Tr(\R[\alpha])$. We shall also analyze the range of the degrees of elements in $\Tr(\R[\alpha])$. The main results of this section is Theorem \ref{generator}.

\subsection{$K$-linear generators}

Let $\alpha\in Q_n^+$. In this subsection, we will give a set of $K$-linear generators for the cocenter of the cyclotomic quiver Hecke algebra $\R[\alpha]$.

Recall that a composition of $n$ is a sequence of non-negative integers $\mathbf{a}=(a_1,a_2,\cdots,a_k)$ such that $\sum_{i=1}^{k}a_i=n$. If $\mathbf{a}=(a_1,a_2,\cdots,a_k)$ is a composition of $n$ then we write $\mathbf{a}\models n$.

Let $\nu=(\nu_1,\cdots,\nu_n)\in I^\alpha$. We define $$
\mathcal{C}(\nu):=\Biggl\{\mathbf{b}=(b_1,\cdots,b_p)\models n\Biggm|\begin{matrix}\text{$p,b_1,\cdots,b_p\geq 1$, $\forall\,1\leq i\leq p$ and}\\
\text{$\sum_{k=1}^{i-1}b_k+1\leq j<\sum_{k=1}^{i}b_k$, $\nu_j=\nu_{j+1}$.}
\end{matrix}\Biggr\}
$$

For any $\mathbf{b}=(b_1,\cdots,b_p)\in \mathcal{C}(\nu)$, we define $\bc:=(c_0,c_1,\cdots,c_{p-1},c_p)$, where \begin{equation}\label{cj1}
c_0:=0,\quad c_j:=b_1+b_2+\cdots+b_j,\,\,j=1,2,\cdots,p .
\end{equation}

Therefore, for any $\mathbf{b}=(b_1,\cdots,b_p)\in \mathcal{C}(\nu)$, we can decompose $\nu$ as follows:
\begin{equation}\label{nu1}
\nu=(\underbrace{\nu^{1},\nu^{1},\cdots,\nu^{1}}_{b_{1}\,copies},\cdots,\underbrace{\nu^{p},\nu^{p},\cdots,\nu^{p}}_{b_{p}\,copies}),
\end{equation}
where $p,b_1,\cdots,b_p\in\Z^{\geq 1}$ with $\sum_{i=1}^{p}b_i=n$, $\nu^1,\cdots,\nu^p\in I$. Note that it could happen that $\nu^{j}=\nu^{j+1}$ for some $1\leq j<p$. We define \begin{equation}\label{cLamnu}
\mathcal{C}^\Lam(\nu):=\biggl\{\mathbf{b}=(b_1,\cdots,b_p)\in\mathcal{C}(\nu)\biggm|\lam_{c_i,\nu^{i+1}}>0, \forall\,0\leq i<p\biggr\}.
\end{equation}

\begin{dfn} Let $\nu\in I^\alpha$. We denote by $\R[\nu,1]$ the $K$-subspace of $\R[\alpha]$ spanned by the elements of the following form: \begin{equation}
\Biggl\{\overrightarrow{\prod\limits_{0\leq i< p}}\Bigl(x_{c_i+1}^{n_{\bc,i+1}}\tau_{c_i+1}\tau_{c_i+2}\cdots\tau_{c_{i+1}-1}\Bigr)e(\nu)\Biggm|\begin{matrix}\text{$0\leq n_{\bc,i+1}<\lam_{c_i,\nu^{i+1}}, \forall\,0\leq i<p$, where}\\
\text{$\mathbf{b}\in\mathcal{C}^\Lam(\nu)$, $\{c_j|1\leq j\leq p\}$ is as in (\ref{cj1}),}\\ \text{$\{\nu^i|1\leq i\leq p\}$ is as in (\ref{nu1}).} \end{matrix}\Biggr\}.
\end{equation}
\end{dfn}

For any subset $A$ of $\R[\alpha]$, we use $\overline{A}$ to denote the natural image of $A$ in the cocenter $\R[\alpha]/[\R[\alpha],\R[\alpha]]$ of $\R[\alpha]$.

\begin{thm}\label{generator} We have $$
\Tr(\R[\alpha])=\RR^\Lam_\alpha/[\RR^\Lam_\alpha,\RR^\Lam_\alpha]=\sum_{\nu\in I^\alpha}\overline{\RR^\Lam_{\nu,1}}.$$
\end{thm}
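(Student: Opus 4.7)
The plan is to show that every class in $\Tr(\R[\alpha])$ admits a representative that is a $K$-linear combination of the normal-form monomials defining $\R[\nu,1]$, by a cyclic reduction sweeping from position $n$ down to position $1$, driven by Kang--Kashiwara's Lemma \ref{KK1}.

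First I would establish a diagonal reduction. For distinct $\nu,\nu'\in I^\alpha$ and any $a\in\R[\alpha]$, cyclic invariance of the cocenter gives $\overline{e(\nu)ae(\nu')}=\overline{ae(\nu')e(\nu)}=0$, whence
\[
\Tr(\R[\alpha])=\sum_{\nu\in I^\alpha}\overline{e(\nu)\R[\alpha]e(\nu)}.
\]
It therefore suffices, for each fixed $\nu\in I^\alpha$, to prove that every $z\in e(\nu)\R[\alpha]e(\nu)$ lies in $\sum_\mu\overline{\R[\mu,1]}$ modulo commutators.

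Next, fix $\nu\in I^\alpha$, set $i=\nu_n$ and $\alpha'=\alpha-\alpha_i$, and apply Lemma \ref{KK1} at position $n$. Either
\[
z=\mu_{\tau_{n-1}}(\pi(z))+\sum_{k=0}^{\lam_{n-1,i}-1}p_k(z)x_n^k e(\nu)\qquad\text{if }\lam_{n-1,i}>0,
\]
or $z=\mu_{\tau_{n-1}}(\widetilde{z})$ if $\lam_{n-1,i}\leq 0$. Writing $\mu_{\tau_{n-1}}(\cdot)=\sum_j a_j\tau_{n-1}b_j$, cyclic invariance yields $\overline{\mu_{\tau_{n-1}}(\cdot)}=\overline{\tau_{n-1}c}$ with $c=\sum_j b_ja_j\in e(\nu_1,\ldots,\nu_{n-1},i)\R[\alpha]e(\nu_1,\ldots,\nu_{n-1},i)$. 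When $\nu_{n-1}\neq i$, the commutation $\tau_{n-1}e(\nu_1,\ldots,\nu_{n-1},i)=e(\nu_1,\ldots,\nu_{n-2},i,\nu_{n-1})\tau_{n-1}$ puts $\tau_{n-1}c$ in $e(\mu)\R[\alpha]e(\nu)$ with $\mu\neq\nu$, so the diagonal reduction kills it; only the polynomial sum survives, which forces $\lam_{n-1,i}>0$ and produces the block-start constraint $n_{\bc,p}<\lam_{c_{p-1},\nu^p}$ arising in $\mathcal{C}^\Lam(\nu)$. When $\nu_{n-1}=i$, the term $\tau_{n-1}c$ genuinely survives, and we iterate Lemma \ref{KK1} on $c$ at position $n-1$, thereby extending the last block and growing the cyclic $\tau$-product $\tau_{c_{p-1}+1}\cdots\tau_{n-1}$ within it.

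Iterating this sweep across all blocks of $\nu$ produces exactly the ordered product in the definition of $\R[\nu,1]$: at each block-start $c_i+1$ an $x$-power with $0\leq n_{\bc,i+1}<\lam_{c_i,\nu^{i+1}}$, followed by the within-block cyclic $\tau$-product, with the block-boundary data assembling into some $\mathbf{b}\in\mathcal{C}^\Lam(\nu)$. The principal obstacle is the bookkeeping of the cyclic rotation identity $\overline{\sum_j a_j\tau_k b_j}=\overline{\tau_k\sum_j b_ja_j}$ in the regime $\lam_{n-1,i}\leq 0$ where no polynomial term is available: one must argue that the rotated element either collapses by the diagonal reduction or genuinely extends a block, guaranteeing that the procedure terminates and produces only the prescribed monomials without spurious cross-terms.
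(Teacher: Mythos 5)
Your overall strategy (reduce to the diagonal $e(\nu)\R[\alpha]e(\nu)$, apply Lemma \ref{KK1} at position $n$, cyclically rotate the $\mu_{\tau_{n-1}}$ piece, and iterate) is the same as the paper's, but the crucial step is analyzed incorrectly. You claim that after rotation $c=\sum_j b_ja_j\in e(\nu_1,\ldots,\nu_{n-1},i)\R[\alpha]e(\nu_1,\ldots,\nu_{n-1},i)$, i.e.\ $c$ stays on the $e(\nu)$-diagonal. This is false: by Lemma \ref{KK1}, $\pi(z)$ lives in $\R[\alpha-\alpha_i]e(\alpha-2\alpha_i,i)\otimes_{\R[\alpha-2\alpha_i]} e(\alpha-2\alpha_i,i)\R[\alpha-\alpha_i]$, so the tensor factors carry the idempotent $e(\alpha-2\alpha_i,i)$ in the middle, and the cyclic rotation therefore yields $c\in e(\alpha-2\alpha_i,i,i)\R[\alpha]e(\alpha-2\alpha_i,i,i)$ --- positions $n-1$ and $n$ are both coloured $i$, \emph{regardless} of whether $\nu_{n-1}=i$ or not. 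In particular $\tau_{n-1}c$ commutes with $e(\alpha-2\alpha_i,i,i)$ and is \emph{not} an off-diagonal element, so your case $\nu_{n-1}\neq i$ does not vanish under diagonal reduction, and the conclusion that ``only the polynomial sum survives, which forces $\lambda_{n-1,i}>0$'' does not hold.

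Because of this, the entire case split on $\nu_{n-1}$ versus $i$ is misconceived; the recursion must always be run at colour $i$ on the trailing positions, growing a tail $\tau_{n-s+1}\cdots\tau_{n-1}$, and terminating only because $\alpha$ contains finitely many $\alpha_i$'s. This is exactly what the paper does, and it is also what dissolves the ``principal obstacle'' you flag in the regime $\lambda_{n-1,i}\leq 0$: since $\lambda_{n-s,i}=\lambda_{n-1,i}+2(s-1)$ increases by $2$ at each step, after finitely many rotations one re-enters the regime $\lambda_{n-s,i}>0$, the polynomial terms reappear, and the inductive hypothesis on $n$ (which you never explicitly set up) applies to the remaining $\R[\alpha-s\alpha_i]$-part. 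Without a correct account of the idempotent bookkeeping after rotation, and without the inductive framework, the proposal does not close.
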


\begin{proof} We use induction on $n$. The case $n=1$ is trivial. Now we suppose our Theorem holds for $n-1\geq 1$. Let $z\in e(\nu)\R[\alpha]e(\mu)$. If $\nu\neq\mu$, then in the cocenter we have $$\overline{ze(\mu)}=\overline{e(\mu)z}=\overline{0} .
$$
Hence we only need to consider the case when $\nu=\mu$. Henceforth we assume $\nu_n=i$.

\medskip
{\it Case 1.} Suppose $\lambda_{n-1,i}>0$. Then by Lemma \ref{KK1} there are unique elements $\pi(z)\in \RR^\Lam_{\alpha-\alpha_i}e(\alpha-2\alpha_i,i)\otimes_{\RR^\Lam_{\alpha-2\alpha_i}}e(\alpha-2\alpha_i,i)\RR^\Lam_{\alpha-\alpha_i}$ and $p_k(z)\in \RR^\Lam_{\alpha-\alpha_i}$ such that \begin{equation}\label{zdecomp1} z=\mu_{\tau_{n-1}}(\pi(z))e(\alpha-\alpha_i,i)+\sum_{k=0}^{\lam_{n-1,i}-1}p_k(z)x_n^k e(\alpha-\alpha_i,i) .
\end{equation}
Since $x_n$ centralize the image of $\RR^\Lam_{\alpha-\alpha_i}$ in $e(\alpha-\alpha_i,i)\RR^\Lam_{\alpha}e(\alpha-\alpha_i,i)$, we can apply induction hypothesis to $p_k(z)$ to deduce that $p_k(z)x_n^k\in \sum_{\rho\in I^\alpha}\RR^\Lam_{\rho,1}+[\R[\alpha],\R[\alpha]]$ for each $0\leq k\leq \lam_{n-1,i}-1$. It remains to consider the first term in the righthand side of (\ref{zdecomp1}). For the first term in (\ref{zdecomp1}), we can write $$\pi(z)=\sum_{l=1}^{m_1}z_{l1}\otimes z_{l2},$$ where $z_{l1}\in\RR^\Lam_{\alpha-\alpha_i}e(\alpha-2\alpha_i,i)$ and $z_{l2}\in e(\alpha-2\alpha_i,i)\RR^\Lam_{\alpha-\alpha_i}$. Hence, we have $$\begin{aligned}\mu_{\tau_{n-1}}(\pi(z))e(\alpha-\alpha_i,i)&=\sum_{l=1}^{m_1}z_{l1}e(\alpha-2\alpha_i,i,i)\tau_{n-1}e(\alpha-2\alpha_i,i,i)z_{l2}\\
&\equiv\sum_{l=1}^{m_1}e(\alpha-2\alpha_i,i,i)z_{l2}z_{l1}e(\alpha-2\alpha_i,i,i)\tau_{n-1}\\
&\equiv e(\alpha-2\alpha_i,i,i)he(\alpha-2\alpha_i,i,i)\tau_{n-1}\pmod{[\R[\alpha],\R[\alpha]]},
\end{aligned}$$ where $h\in e(\alpha-2\alpha_i,i)\RR^\Lam_{\alpha-\alpha_i}e(\alpha-2\alpha_i,i)$.

Consider $e(\alpha-2\alpha_i,i)$. Since $\lam_{n-2,i}=\lambda_{n-1,i}+2\geq 3>0$, we can use Lemma \ref{KK1} again to find unique elements $\pi(h)\in \RR^\Lam_{\alpha-2\alpha_i}e(\alpha-3\alpha_i,i)\otimes_{\RR^\Lam_{\alpha-3\alpha_i}}e(\alpha-3\alpha_i,i)\RR^\Lam_{\alpha-2\alpha_i}$ and $p_k(h)\in \RR^\Lam_{\alpha-2\alpha_i}$ for each $0\leq k\leq\lam_{n-1,i}+1$, such that \begin{equation}\label{zdecomp2}
h=\mu_{\tau_{n-2}}(\pi(h))e(\alpha-2\alpha_i,i)+\sum_{k=0}^{\lam_{n-1,i}+1}p_k(h)x_{n-1}^ke(\alpha-2\alpha_i,i),\end{equation}
Using the same argument as before (i.e., induction hypothesis), we can deduce that $$
p_k(h)x_{n-1}^k\tau_{n-1}e(\alpha-2\alpha_i,i,i)\in \sum_{\rho\in I^\alpha}\RR^\Lam_{\rho,1}+[\R[\alpha],\R[\alpha]],\,\,\,\forall\,\,0\leq k\leq \lam_{n-1,i}+1.
$$
Hence, we only need to consider $\mu_{\tau_{n-2}}(\pi(h))\tau_{n-1}e(\alpha-2\alpha_i,i,i)$. As the same computation above, we write  $$\pi(h)=\sum_{l=1}^{m_2}h_{l1}\otimes h_{l2},$$ where $h_{l1}\in\RR^\Lam_{\alpha-2\alpha_i}e(\alpha-3\alpha_i,i)$ and $h_{l2}\in e(\alpha-3\alpha_i,i)\RR^\Lam_{\alpha-2\alpha_i}$.  Also note that $\tau_{n-1}$ centralize the image of $\RR^\Lam_{\alpha-2\alpha_i}e(\alpha-3\alpha_i,i)$ and $e(\alpha-3\alpha_i,i)\RR^\Lam_{\alpha-2\alpha_i}$ in $e(\alpha-2\alpha_i,i,i)\RR^\Lam_{\alpha}e(\alpha-2\alpha_i,i,i)$, we can deduce $$
\begin{aligned}
&\quad\,\mu_{\tau_{n-2}}(h)\tau_{n-1}e(\alpha-3\alpha_i,i)\\
&=\sum_{l=1}^{m_2}h_{l1}e(\alpha-3\alpha_i,i,i,i)\tau_{n-2}e(\alpha-3\alpha_i,i,i,i)h_{l2}\tau_{n-1}\\
&=\sum_{l=1}^{m_2}h_{l1}e(\alpha-3\alpha_i,i,i,i)\tau_{n-2}\tau_{n-1}e(\alpha-3\alpha_i,i,i,i)h_{l2}\\
&\equiv\sum_{l=1}^{m_2}e(\alpha-3\alpha_i,i,i,i)h_{l2}h_{l1}e(\alpha-3\alpha_i,i,i,i)\tau_{n-2}\tau_{n-1}\\
&\equiv e(\alpha-3\alpha_i,i,i,i)h'e(\alpha-3\alpha_i,i,i,i)\tau_{n-2}\tau_{n-1}\pmod{[\R[\alpha],\R[\alpha]]},
\end{aligned}$$
where $h'\in e(\alpha-3\alpha_i,i)\RR^\Lam_{\alpha-2\alpha_i}e(\alpha-3\alpha_i,i)$. It remains to show that $$
e(\alpha-3\alpha_i,i,i,i)h'e(\alpha-3\alpha_i,i,i,i)\tau_{n-2}\tau_{n-1}\in \sum_{\rho\in I^\alpha}\RR^\Lam_{\rho,1}+[\R[\alpha],\R[\alpha]].
$$
Next consider $e(\alpha-3\alpha_i,i)$ and replace $h$ with $h'$ in (\ref{zdecomp2}). Repeating the previous argument and noting that the number of $i$ occurred in $\alpha$ is finite, we will end up with an element of the form: $$
\hat{z}=\hat{h}\tau_{n-s+1}\cdots\tau_{n-1}e(\alpha-s\alpha_i,\underbrace{i,i,\cdots,i}_{s\,copies}),$$ where $\hat{h}\in e(\alpha-s\alpha_i,i)\RR^\Lam_{\alpha-(s-1)\alpha_i}e(\alpha-s\alpha_i,i)$ and $$\mu_{\tau_{n-s}}(\pi(\hat{h}))=0,\quad
\hat{h}=\sum_{t=0}^{\lam_{n-1,i}+2s-1} p_k(\hat{h})x_{n-s+1}^t, $$
where $p_k(\hat{h})\in \RR^\Lam_{\alpha-s\alpha_i}$. But $x_{n-s+1}^t\tau_{n-s+1}\cdots\tau_{n-1}$ centralize the image of $\RR^\Lam_{\alpha-s\alpha_i}$ in $e(\alpha-s\alpha_i,\underbrace{i,i,\cdots,i}_{s\,copies})\RR^\Lam_{\alpha}e(\alpha-s\alpha_i,\underbrace{i,i,\cdots,i}_{s\,copies})$. The induction hypothesis implies that for any $0\leq t\leq \lam_{n-1,i}+2s-1$,  $$p_k(\hat{h})x_{n-s+1}^t\tau_{n-s+1}\cdots\tau_{n-1}e(\alpha-s\alpha_i,\underbrace{i,i,\cdots,i}_{s\,copies})\in  \sum_{\rho\in I^\alpha}\RR^\Lam_{\rho,1}+[\R[\alpha],\R[\alpha]].$$
This proves $z\in \sum_{\rho\in I^\alpha}\RR^\Lam_{\rho,1}+[\R[\alpha],\R[\alpha]]$ in the case $\lam_{n-1,i}\geq 0$.

\medskip
{\it Case 2.} Suppose $\lam_{n-1,i}\leq 0$. Then by Lemma \ref{KK1} there is an unique element $\widetilde{z}\in \RR^\Lam_{\alpha-\alpha_i}e(\alpha-2\alpha_i,i)\otimes_{\RR^\Lam_{\alpha-2\alpha_i}}e(\alpha-2\alpha_i,i)\RR^\Lam_{\alpha-\alpha_i}$ such that $$
\mu_{\tau_{n-1}}(\widetilde{z})e(\alpha-\alpha_i,i)=z,\quad \mu_{x_n^k}(\widetilde{z})e(\alpha-\alpha_i,i)=0,\,\,\forall\,0\leq k\leq -\lam_{n-1,i}-1. $$
we can write $$\widetilde{z}=\sum_{l=1}^{\breve{m}_1}z_{l1}\otimes z_{l2},$$ where $z_{l1}\in\RR^\Lam_{\alpha-\alpha_i}e(\alpha-2\alpha_i,i)$ and $z_{l2}\in e(\alpha-2\alpha_i,i)\RR^\Lam_{\alpha-\alpha_i}$. Hence, we have $$\begin{aligned}\mu_{\tau_{n-1}}(\widetilde{z})e(\alpha-\alpha_i,i)&=\sum_{l=1}^{\breve{m}_1}z_{l1}e(\alpha-2\alpha_i,i,i)\tau_{n-1}e(\alpha-2\alpha_i,i,i)z_{l2}\\
&\equiv\sum_{l=1}^{\breve{m}_1}e(\alpha-2\alpha_i,i,i)z_{l2}z_{l1}e(\alpha-2\alpha_i,i,i)\tau_{n-1}\\
&\equiv e(\alpha-2\alpha_i,i,i)h(1)e(\alpha-2\alpha_i,i,i)\tau_{n-1}\pmod{[\R[\alpha],\R[\alpha]]},
\end{aligned}$$ where $h(1)\in e(\alpha-2\alpha_i,i)\RR^\Lam_{\alpha-\alpha_i}e(\alpha-2\alpha_i,i)$. It remains to show that $e(\alpha-2\alpha_i,i,i)h(1)e(\alpha-2\alpha_i,i,i)\tau_{n-1}\in \sum_{\rho\in I^\alpha}\RR^\Lam_{\rho,1}+[\R[\alpha],\R[\alpha]]$.

Now consider $e(\alpha-2\alpha_i,i)$. Note that $\lam_{n-2,i}=\lambda_{n-1,i}+2$. If $\lambda_{n-1,i}+2\leq 0$, then repeating the above argument (replacing $\widetilde{z}$ with $\widetilde{h(1)}$) we can get an element $h(2)\in e(\alpha-3\alpha_i,i)\RR^\Lam_{\alpha-2\alpha_i}e(\alpha-3\alpha_i,i)$, and it remains to show that  \begin{equation}\label{h21}e(\alpha-3\alpha_i,i,i,i)h(2)e(\alpha-3\alpha_i,i,i,i)\tau_{n-2}\tau_{n-1}\in \sum_{\rho\in I^\alpha}\RR^\Lam_{\rho,1}+[\R[\alpha],\R[\alpha]]. \end{equation}

If $\lambda_{n-1,i}+2>0$, then we can use Lemma \ref{KK1} again to find unique elements $\pi(h(1))\in \RR^\Lam_{\alpha-2\alpha_i}e(\alpha-3\alpha_i,i)\otimes_{\RR^\Lam_{\alpha-3\alpha_i}}e(\alpha-3\alpha_i,i)\RR^\Lam_{\alpha-2\alpha_i}$ and $p_k(h(1))\in \RR^\Lam_{\alpha-2\alpha_i}$ for each $0\leq k\leq\lam_{n-1,i}+1$, such that \begin{equation}\label{zdecomp3}
h(1)=\mu_{\tau_{n-2}}(\pi(h(1)))e(\alpha-2\alpha_i,i)+\sum_{k=0}^{\lam_{n-1,i}+1}p_k(h(1))x_{n-1}^ke(\alpha-2\alpha_i,i),\end{equation}
Using the same argument as before (i.e., induction hypothesis), we can deduce that $$
p_k(h(1))x_{n-1}^k\tau_{n-1}e(\alpha-2\alpha_i,i,i)\in \sum_{\rho\in I^\alpha}\RR^\Lam_{\rho,1}+[\R[\alpha],\R[\alpha]],\,\,\,\forall\,\,0\leq k\leq \lam_{n-1,i}+1.
$$
Hence, we only need to consider $\mu_{\tau_{n-2}}(\pi(h(1)))\tau_{n-1}e(\alpha-2\alpha_i,i,i)$. We can write $$\pi(h(1))=\sum_{l=1}^{\breve{m}_2}h_{l1}\otimes h_{l2},$$ where $h_{l1}\in\RR^\Lam_{\alpha-2\alpha_i}e(\alpha-3\alpha_i,i)$ and $h_{l2}\in e(\alpha-3\alpha_i,i)\RR^\Lam_{\alpha-2\alpha_i}$. Hence, we have $$\begin{aligned}&\quad\,\mu_{\tau_{n-2}}(\pi(h(1)))\tau_{n-1}e(\alpha-2\alpha_i,i,i)\\
&=\sum_{l=1}^{\breve{m}_2}h_{l1}e(\alpha-3\alpha_i,i,i,i)\tau_{n-2}e(\alpha-3\alpha_i,i,i,i)h_{l2}\tau_{n-1}\\
&\equiv\sum_{l=1}^{\breve{m}_2}e(\alpha-3\alpha_i,i,i,i)h_{l2}h_{l1}e(\alpha-3\alpha_i,i,i,i)\tau_{n-2}\tau_{n-1}\\
&\equiv e(\alpha-3\alpha_i,i,i,i)h(2)e(\alpha-3\alpha_i,i,i,i)\tau_{n-2}\tau_{n-1}\pmod{[\R[\alpha],\R[\alpha]]},
\end{aligned}$$ where $h(2)\in e(\alpha-3\alpha_i,i)\RR^\Lam_{\alpha-2\alpha_i}e(\alpha-3\alpha_i,i)$. It remains to show that \begin{equation}\label{h22}
e(\alpha-3\alpha_i,i,i,i)h(2)e(\alpha-3\alpha_i,i,i,i)\tau_{n-2}\tau_{n-1}\in \sum_{\rho\in I^\alpha}\RR^\Lam_{\rho,1}+[\R[\alpha],\R[\alpha]].
\end{equation}

Note that (\ref{h22}) is formally the same as (\ref{h21}). Therefore, we are in a position to repeat the same argument as before. As we discussed in the last paragraph of Case 1, this procedure will end after a finite number of steps. As a result, we can deduce that $z\in \sum_{\rho\in I^\alpha}\RR^\Lam_{\rho,1}+[\R[\alpha],\R[\alpha]]$. This completes the proof in Case 2 and hence the proof of the whole theorem.
\end{proof}

\smallskip

\subsection{Positivity of the degree of the cocenter}

Let $\alpha\in Q_n^+$. The purpose of this subsection is to give an application of Theorem \ref{generator}. We shall show that any element in the cocenter $\Tr(\R[\alpha])$ of $\R[\alpha]$ has degree $\geq 0$. Equivalently, this means any element in the center $Z(\R[\alpha])$ of $\R[\alpha]$ has degree $\leq d_{\Lam,\alpha}$.

Let $\ell,n\in\N$. Consider the cyclotomic nilHecke algebra $\NH_n^\ell$ of type $A$ which is defined over $K$. Applying \cite[Theorem 3.7]{HuL}, we see that the center $Z(\NH_n^\ell)$ of the cyclotomic nilHecke algebra $\NH_n^\ell$ has a basis $\{z_\mu|\mu\in\mathscr{P}_0\}$, where $\mathscr{P}_0$ is defined in the paragraph above \cite[Definition 2.5]{HuL}. The degree of each basis element $z_\mu$ is explicitly known by \cite[Definition 3.3]{HuL}. In particular, we know that \begin{equation}\label{degnilhecke}
\text{$\bigl(Z(\NH_n^\ell)\bigr)_j\neq 0$\,\,\, only if\,\,\, $0\leq j\leq 2\ell n-2n^2$,}
\end{equation}
and $\dim\bigl(Z(\NH_n^\ell)\bigr)_0=\dim\bigl(Z(\NH_n^\ell)\bigr)_{2\ell n-2n^2}=1$. As a result, we can deduce that \begin{equation}\label{degnilhecke2}
\text{$\Tr(\NH_n^\ell)_j\neq 0$\,\,\, only if\,\,\, $0\leq j\leq 2\ell n-2n^2$,}
\end{equation}
and $\dim\Tr(\NH_n^\ell)_0=\dim\Tr(\NH_n^\ell)_{2\ell n-2n^2}=1$.

By \cite[Corollary 4.4]{KK}, $\R[\alpha]$ is a finite dimensional $K$-linear space. Let $\nu\in I^\alpha$. For each $1\leq k\leq n$, we have that
$\deg x_ke(\nu)>0$. It follows that $x_ke(\nu)$ is nilpotent in $\R[\alpha]$.

Now let $\mathbf{b}=(b_1,\cdots,b_p)\in\mathcal{C}(\nu)$. We define $\bc=(c_0,c_1,\cdots,c_{p})$ as in (\ref{cj1}). For each $0\leq j\leq p-1$, we use $l_j$ to denote the nilpotent index of $x_{c_j+1}e(\nu)$. That says, $$
l_j:=\min\biggl\{l\geq 1\biggm|\bigl(x_{c_j+1}e(\nu)\bigr)^l=0\biggr\} .
$$
We define $$
\NH_{\mathbf{b}}^{l_1,\cdots,l_p}:=\NH_{\{1,2,\cdots,b_1\}}^{l_1}\otimes \NH_{\{c_1+1,c_1+2,\cdots,c_2\}}^{l_2}\otimes\cdots\otimes
\NH_{\{n-b_p+1,\cdots,n\}}^{l_p},
$$
where for each $1\leq j\leq p$, $\NH_{\{c_{j-1}+1,c_{j-1}+2,\cdots,c_j\}}^{l_j}$ denote the cyclotomic nilHecke algebra with standard generators $$
x_{c_{j-1}+1}, x_{c_{j-1}+2}, \cdots, x_{c_j}, \tau_{c_{j-1}+1},\tau_{c_{j-1}+2},\cdots,\tau_{c_j-1},
$$
which is isomorphic to $\NH_{b_j}^{l_j}$. Clearly, the following correspondence $$\begin{aligned}
x_{k} & \mapsto x_ke(\nu),\quad\,\forall\,1\leq k\leq n,\\
\tau_{c_{i-1}+l}  & \mapsto \tau_l e(\nu),\quad\,\forall\, 1\leq l<b_j, 1\leq i\leq p,
\end{aligned}$$
can be extended uniquely to a $K$-algebra homomorphism $\pi_{\mathbf{b},\nu}: \NH_{\mathbf{b}}^{l_1,\cdots,l_p} \rightarrow \RR^\Lam_{\alpha}$.

By construction, it is clear that $\pi_{\mathbf{b}}$ induces a homogeneous $K$-linear map: $$
\overline{\pi}_{\mathbf{b},\nu}: \Tr(\NH_{\mathbf{b}}^{l_1,\cdots,l_p}) \rightarrow \Tr(\RR^\Lam_{\alpha}).$$

As an easy application of Theorem \ref{generator}, we recover one half of \cite[Theorem 3.31(a)]{SVV} (see Corollary \ref{maincor0} for another half of \cite[Theorem 3.31(a)]{SVV}). Their original proof used the categorical representation and an action of the loop algebra, while our proof is more direct and elementary.

\begin{prop}\text{\rm (\cite[Theorem 3.31(a)]{SVV})}\label{svv1a} The cocenter $\Tr(\R[\alpha])$ of $\R[\alpha]$ is always positive graded. In other words, for any $j\in\Z$, $$
\text{$(\Tr(\R[\alpha]))_j\neq 0$\,\,\, only if \,\,\, $j\geq 0$.}
$$
Equivalently, $Z(\R[\alpha])_j\neq 0$\,\, only if \,\,$j\leq d_{\Lam,\alpha}$.
\end{prop}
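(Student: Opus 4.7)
The plan is to combine Theorem \ref{generator} with the homomorphism $\pi_{\mathbf{b},\nu}$ constructed just above the statement and the non-negativity of the cocenter of the cyclotomic nilHecke algebra recorded in (\ref{degnilhecke2}). The key observation is that the spanning elements of $\R[\nu,1]$ are designed block-by-block so as to lie in the image of $\pi_{\mathbf{b},\nu}$, and the grading on the tensor product of cyclotomic nilHecke algebras pulls back to the quiver Hecke grading, up to a positive rescaling within each block.

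By Theorem \ref{generator}, $\Tr(\R[\alpha])=\sum_{\nu\in I^\alpha}\overline{\R[\nu,1]}$, so it suffices to control a typical spanning element
\[
z\ =\ \overrightarrow{\prod_{0\leq i<p}}\Bigl(x_{c_i+1}^{n_{\bc,i+1}}\tau_{c_i+1}\cdots\tau_{c_{i+1}-1}\Bigr)e(\nu),\qquad \mathbf{b}=(b_1,\ldots,b_p)\in\mathcal{C}^\Lam(\nu).
\]
Inside the block of indices $\{c_i+1,\ldots,c_{i+1}\}$ all residues $\nu_j$ coincide with a single $\nu^{i+1}\in I$, so the $i$-th factor above only involves the generators of the tensor factor $\NH_{\{c_i+1,\ldots,c_{i+1}\}}^{l_{i+1}}$ of $\NH_{\mathbf{b}}^{l_1,\ldots,l_p}$. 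Hence $z=\pi_{\mathbf{b},\nu}(z')$ for an analogously shaped element $z'$ inside this tensor product, and passing to cocenters yields $\bar z=\overline{\pi}_{\mathbf{b},\nu}(\bar z')$.

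It remains to establish positivity. Endow the $j$-th factor $\NH_{b_j}^{l_j}$ with the rescaling of its standard grading by the positive integer $d_{\nu^j}$: this makes $\pi_{\mathbf{b},\nu}$ homogeneous of degree zero, since within the $j$-th block the image $x_k e(\nu)$ has degree $(\alpha_{\nu^j},\alpha_{\nu^j})=2d_{\nu^j}$ and $\tau_l e(\nu)$ has degree $-2d_{\nu^j}$. Since positive rescaling preserves non-negativity of degrees, and since $\Tr(A\otimes B)\cong\Tr(A)\otimes\Tr(B)$ over any field (using the elementary identity $[A\otimes B,A\otimes B]=[A,A]\otimes B+A\otimes[B,B]$), we obtain
\[
\Tr\bigl(\NH_{\mathbf{b}}^{l_1,\ldots,l_p}\bigr)\ \cong\ \bigotimes_{j=1}^{p}\Tr\bigl(\NH_{b_j}^{l_j}\bigr),
\]
with each tensor factor concentrated in standard degrees $[0,2l_jb_j-2b_j^2]$ by (\ref{degnilhecke2}). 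Consequently $\bar z$, being the image of a non-negatively graded class, itself has non-negative degree in $\Tr(\R[\alpha])$, proving $\Tr(\R[\alpha])_j=0$ for $j<0$. The equivalent statement for the center follows from Lemma \ref{tLam}: the homogeneous symmetrizing form $t_{\Lam,\alpha}$ of degree $-d_{\Lam,\alpha}$ induces a graded $K$-linear isomorphism $\Tr(\R[\alpha])\cong Z(\R[\alpha])^*\<d_{\Lam,\alpha}\>$, converting the vanishing just obtained into $Z(\R[\alpha])_j=0$ for $j>d_{\Lam,\alpha}$. The only subtle point is articulating the block-wise grading match through $\pi_{\mathbf{b},\nu}$; once the rescaling by $d_{\nu^j}$ is in place, the rest of the argument is formal.
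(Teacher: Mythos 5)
Your proof is correct and follows the same strategy as the paper: reduce via Theorem \ref{generator} to spanning elements of the $\R[\nu,1]$, push them through $\pi_{\mathbf{b},\nu}$ into (tensor products of) cyclotomic nilHecke algebras, and invoke the non-negativity of degrees in $\Tr(\NH_n^\ell)$ from (\ref{degnilhecke2}). The paper's own proof is terser (it just declares the induced map $\pi=\sum_{\nu,\mathbf{b}}\pi_{\mathbf{b},\nu}$ on cocenters surjective and "homogeneous"), whereas you usefully make explicit two points the paper leaves implicit: the block-wise rescaling by $d_{\nu^j}$ that reconciles the nilHecke grading with the KLR grading (so the map really is degree-preserving and positivity is transported correctly), and the elementary identity $[A\otimes B,A\otimes B]=[A,A]\otimes B+A\otimes[B,B]$ behind $\Tr(\NH_{\mathbf b}^{l_1,\ldots,l_p})\cong\bigotimes_j\Tr(\NH_{b_j}^{l_j})$. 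These are clarifications, not a different route.
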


\begin{proof} We consider all the $\nu\in I^\alpha$ and all the decomposition of $\nu$ as in \eqref{nu1}. Applying Theorem \ref{generator}, we can deduce that the following homogeneous $K$-linear map: $$
\pi:=\sum_{\nu,\mathbf{b}}\pi_{\mathbf{b},\nu}: \bigoplus_{\nu,\mathbf{b}}\Tr(\NH_{\mathbf{b}}^{l_1,\cdots,l_p}) \rightarrow \Tr(\RR^\Lam_{\alpha})$$
is surjective. Since $\Tr(\NH_{\mathbf{b}}^{l_1,\cdots,l_p})\cong\Tr(\NH_{b_1}^{l_1})\otimes\cdots\otimes\Tr(\NH_{b_p}^{l_p})$ is positively graded, the same must be also true for $\Tr(\R[\alpha])$. This proves the proposition.
\end{proof}

\begin{cor} Let $\alpha\in Q_n^+$. Suppose that $\R[\alpha]\neq 0$. Then $d_{\Lam,\alpha}\geq 0$.
\end{cor}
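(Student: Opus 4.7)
The plan is to combine the positivity of the cocenter (Proposition \ref{svv1a}) with the symmetrizing form duality between cocenter and center (Lemma \ref{tLam}).

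First I would observe that the hypothesis $\R[\alpha]\neq 0$ implies $1\in Z(\R[\alpha])_0$ is nonzero, so $Z(\R[\alpha])_0\neq 0$. Next I would invoke Lemma \ref{tLam}: the form $t_{\Lam,\alpha}:\R[\alpha]\to K$ is a homogeneous symmetrizing form of degree $-d_{\Lam,\alpha}$. Via $(x,y)\mapsto t_{\Lam,\alpha}(xy)$, this induces a nondegenerate symmetric bilinear pairing, which in turn yields a nondegenerate graded pairing
\[
\Tr(\R[\alpha])\times Z(\R[\alpha])\longrightarrow K,\qquad (\overline{x},z)\longmapsto t_{\Lam,\alpha}(xz),
\]
well-defined because $t_{\Lam,\alpha}$ is a trace and $z$ is central. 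Under this pairing, $\Tr(\R[\alpha])_j$ is dual to $Z(\R[\alpha])_{d_{\Lam,\alpha}-j}$. Applying this with $j=d_{\Lam,\alpha}$ and using $Z(\R[\alpha])_0\neq 0$, I conclude that $\Tr(\R[\alpha])_{d_{\Lam,\alpha}}\neq 0$.

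Finally, Proposition \ref{svv1a} tells us that $\Tr(\R[\alpha])_j\neq 0$ only if $j\geq 0$. Combining this with the previous step forces $d_{\Lam,\alpha}\geq 0$, which is the desired conclusion.

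There is no real obstacle here; the statement is an immediate formal consequence of Proposition \ref{svv1a} together with the center/cocenter duality provided by the symmetrizing form of Lemma \ref{tLam}. The only small point requiring care is to remember that the nondegeneracy of $t_{\Lam,\alpha}$ as a symmetrizing form is exactly what turns the trivial fact $1\neq 0$ in $Z(\R[\alpha])$ into the nonvanishing of $\Tr(\R[\alpha])$ in degree $d_{\Lam,\alpha}$.
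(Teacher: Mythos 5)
Your proof is correct and is essentially the paper's argument, phrased slightly more verbosely via the center/cocenter duality pairing. The paper goes a touch more directly: since $t_{\Lam,\alpha}$ is a nonzero symmetrizing form homogeneous of degree $-d_{\Lam,\alpha}$ on the nonzero algebra $\R[\alpha]$, there is some $h$ of degree $d_{\Lam,\alpha}$ with $t_{\Lam,\alpha}(h)\neq 0$, hence $\overline{h}\neq 0$ in $\Tr(\R[\alpha])_{d_{\Lam,\alpha}}$, and Proposition~\ref{svv1a} then forces $d_{\Lam,\alpha}\geq 0$ — which is exactly the content of your pairing argument with $z=1$.
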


\begin{proof} By \cite[Proposition 3.10]{SVV}, there exists $0\neq h\in\R[\alpha]$ of degree $d_{\Lam,\alpha}$ such that $t_{\Lam,\alpha}(h)\neq 0$. Hence $h\notin [\R[\alpha],\R[\alpha]]$, or equivalently, $\overline{0}\neq h\in\Tr(\R[\alpha])$. Applying Proposition \ref{svv1a}, we can deduce that $d_{\Lam,\alpha}\geq 0$.
\end{proof}

Prof. Wei Hu has asked whether $d_{\Lam,\alpha}\geq 0$ is sufficient to ensure that $\R[\alpha]\neq 0$. The following example shows that this is not the case.

\begin{examp} Let $I:=\Z/3\Z$, $\fg:=\widehat{\mathfrak{sl}}_3$, $\Lam=4\Lam_0$, $\beta=\alpha_0+2\alpha_1$. Then we have $d_{\Lam,\beta}=2>0$. But $\R[\beta]=0$, because otherwise it should be a block of the cyclotomic Hecke algebra of type $G(4,1,3)$. But the latter case can not happen by \cite[Lemma 4.1]{HM} because there is no standard tableau $\t=(\t^{(1)},\t^{(2)},\t^{(3)},\t^{(4)})$ whose residues multiset is equal to $\{0,1,1\}$.
\end{examp}

In Theorem \ref{criterion} of Section 4 we shall give a necessary and sufficient condition for which $\R[\alpha]\neq 0$.

\smallskip
\subsection{Relations inside the cocenter} The purpose of this subsection is to present some relations inside the cocenter.

Let $\nu\in I^\alpha$. Let $\mathbf{b}=(b_1,\cdots,b_p)\in\mathcal{C}(\nu)$. We define $\mathbf{c}=(c_0,c_1,\cdots,c_{p})$ as in (\ref{cj1}).

\begin{dfn} Let $1\leq m,m'\leq n$. We define $A_{m}$ to be the $K$-subalgebra of $\R[\alpha]$ generated by $$
\tau_w,\, x_j,\,\, w\in \Sym_{\{1,2,\cdots,m\}}, 1\leq j\leq m .
$$
We define $B_{m'-1}$ to be the $K$-subalgebra of $\R[\alpha]$ generated by $$
\tau_w,\, x_j,\,\, w\in \Sym_{\{m',m'+1,\cdots,n\}}, m'\leq j\leq n .
$$
By convention, we set $A_0=Ke(\nu)=B_n$. We call $A_{m}$ the first $m$-th part of $\R[\alpha]$, while call $B_{m'-1}$ the last $(n-(m'-1))$-th part of $\R[\alpha]e(\nu)$.
\end{dfn}
In particular, $$\begin{aligned}
A_{m} & =\text{$K$-{\rm Span}}\Bigl\{\tau_w x_1^{k_1}x_2^{k_2}\cdots x_{m}^{k_{c_t}}e(\nu)\Bigm|w\in\Sym_{\{1.2.\cdots,m\}},\,k_1,\cdots,k_{m}\in\N\Bigr\},\\
B_{m'-1} & =\text{$K$-{\rm Span}}\Bigl\{\tau_w x_{m'}^{k_{m'}}x_{m'+1}^{k_{m'+1}}\cdots x_{n}^{k_n}e(\nu)\Bigm|\begin{matrix}\text{$w\in
\Sym_{\{m',m'+1,\cdots,n\}}$,}\\ \text{and $k_{m'},k_{m'+1},\cdots,k_n\in\N$}\end{matrix}\Bigr\}.
 \end{aligned}
$$

\begin{lem}\label{relations}
Suppose that $y=y_1 x_{c_t+1}^k\tau_{c_t+1}\tau_{c_{t}+2}\cdots \tau_{c_{t+1}-1} y_2e(\nu)$, where $k\in\N, y_1\in A_{c_t},\,y_2\in B_{c_{t+1}}, 0\leq t\leq p-1$. \begin{enumerate}
\item[(1)] If  $b_{t+1}=2$, then we have $$ \begin{aligned}
(k+1)y+&y_1\Bigl(\sum_{\substack{k_1,k_2\in\N,\\ k_1+k_2=k-1}}x_{c_t+1}^{k_1}x_{c_t+2}^{k_2}+\sum_{\substack{k_1,k'_1,k'_2\in\N,\\ 1\leq k_1\leq k-1,\\k_1'+k_2'=k-1-k_1}}x_{c_t+1}^{k_1+k_1'}x_{c_t+2}^{k_2'}\Bigr)y_2e(\nu)\in [\R[\alpha],\R[\alpha]];
\end{aligned}$$
\item[(2)] If $b_{t+1}>2$, then we have $$ \begin{aligned}
(k+1)y+&y_1\Biggl(kx_{c_t+1}^{k-1}\tau_{c_t+1}\tau_{c_{t}+2}\cdots \tau_{c_{t+1}-2}+\sum_{\substack{k_1,k'_1,k'_2\in\N,\\ 0\leq k_1\leq k-2,\\ k_1'+k_2'=k-2-k_1}}x_{c_t+1}^{k_1+k_1'}x_{c_t+2}^{k_2'}\tau_{c_{t}+2}\cdots \tau_{c_{t+1}-2} \\
&+\sum_{\substack{k_1,k'_1,k'_2\in\N,\\ 1\leq k_1\leq k-1,\\k_1'+k_2'=k-1-k_1}}x_{c_t+1}^{k_1+k_1'}x_{c_t+2}^{k_2'}\tau_{c_{t}+2}\cdots \tau_{c_{t+1}-1} \Biggr)y_2e(\nu)\in [\R[\alpha],\R[\alpha]].
\end{aligned}$$
\end{enumerate}
In particular, in both cases if $k=0$ then $y\in [\R[\alpha],\R[\alpha]]$.
\end{lem}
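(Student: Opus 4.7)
The plan is to reduce everything to computations inside the ``middle block'' $[c_t+1, c_{t+1}]$, where all residues $\nu_{c_t+1}=\cdots=\nu_{c_{t+1}}$ agree by definition of $\mathcal{C}(\nu)$. In this block the KLR relations become the nilHecke relations: $\tau_{c_t+1}^2 e(\nu)=0$, $\tau_{c_t+1}x_{c_t+1}e(\nu)=x_{c_t+2}\tau_{c_t+1}e(\nu)-e(\nu)$, and the braid relation $\tau_k\tau_{k+1}\tau_k e(\nu)=\tau_{k+1}\tau_k\tau_{k+1}e(\nu)$ (since $Q_{ii}=0$). A crucial structural observation is that every generator of $y_1\in A_{c_t}$ and $y_2\in B_{c_{t+1}}$ commutes with every generator of the middle block, and both commute with $e(\nu)$ on the appropriate side. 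I will write $T=\tau_{c_t+1}\cdots\tau_{c_{t+1}-1}$ and $T''$ for either $e(\nu)$ (case (1)) or $\tau_{c_t+2}\cdots\tau_{c_{t+1}-1}$ (case (2)), and introduce the auxiliary family $y^{(j)}:=y_1 x_{c_t+1}^{k-j}x_{c_t+2}^j T y_2 e(\nu)$ for $0\le j\le k$, so that $y^{(0)}=y$.

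The first step is a telescoping identity. Compute $[x_{c_t+1},\, y_1 x_{c_t+1}^{k-1-j}x_{c_t+2}^j T y_2 e(\nu)]$: since $x_{c_t+1}$ commutes with every factor except $\tau_{c_t+1}$, and we can replace $\tau_{c_t+1} x_{c_t+1}$ using the nilHecke relation, the commutator equals $y^{(j)}-y^{(j+1)}+u^{(j)}$, where $u^{(j)}:=y_1 x_{c_t+1}^{k-1-j}x_{c_t+2}^j T'' y_2 e(\nu)$. Thus $\overline{y^{(j)}}-\overline{y^{(j+1)}}=-\overline{u^{(j)}}$, which telescopes to $\overline{y^{(j)}}=\overline{y}+\sum_{l<j}\overline{u^{(l)}}$. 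The second step is a summation identity derived from cyclicity and $\tau_{c_t+1}^2e(\nu)=0$. Apply $\overline{\tau_{c_t+1}\cdot A}=\overline{A\cdot\tau_{c_t+1}}$ to $A=y_1 x_{c_t+1}^{k+1}T y_2 e(\nu)$. The left side, after commuting $\tau_{c_t+1}$ past $y_1$, expanding $\tau_{c_t+1}x_{c_t+1}^{k+1}=x_{c_t+2}^{k+1}\tau_{c_t+1}-\sum_{i+j=k}x_{c_t+1}^i x_{c_t+2}^j$, and using $\tau_{c_t+1}^2 e(\nu)=0$, collapses to $-\sum_{j=0}^{k}\overline{y^{(j)}}$. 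For the right side, push $\tau_{c_t+1}$ leftward through the commuting $e(\nu)$ and $y_2$, arriving at $T\tau_{c_t+1}$; in case (1) this is $\tau_{c_t+1}^2=0$, while in case (2) the braid relation gives $T\tau_{c_t+1}=\tau_{c_t+2}T$, and iterated cyclic moves together with the braid produce $T\tau_j \leadsto \tau_{j+1}T$ until we reach $T\tau_{c_{t+1}-1}=\tau_{c_t+1}\cdots\tau_{c_{t+1}-2}\tau_{c_{t+1}-1}^2=0$. Either way the right side vanishes, yielding $\sum_{j=0}^{k}\overline{y^{(j)}}=0$.

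Combining the two identities: substituting the telescoped formula into the summation identity gives $(k+1)\overline{y}+\sum_{l=0}^{k-1}(k-l)\overline{u^{(l)}}=0$, i.e.
\[
(k+1)y+\sum_{e=0}^{k-1}(e+1)\,y_1 x_{c_t+1}^e x_{c_t+2}^{k-1-e}T'' y_2 e(\nu)\in[\R[\alpha],\R[\alpha]]
\]
after the reindexing $e=k-1-l$. For case (1), $T''=e(\nu)$ and a direct bookkeeping verifies that $\sum_{e=0}^{k-1}(e+1)x_{c_t+1}^ex_{c_t+2}^{k-1-e}$ equals the two-sum expression in the statement: the first sum contributes coefficient $1$ to each monomial $x_{c_t+1}^{k_1}x_{c_t+2}^{k_2}$ with $k_1+k_2=k-1$, while the second sum contributes coefficient $k_1$ (obtained by counting the $k_1$ admissible triples $(k_1,k'_1,k'_2)$ for each fixed exponent pair). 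For case (2), the derived polynomial uses $T''=\tau_{c_t+2}\cdots\tau_{c_{t+1}-1}$, whereas the stated $N$ involves three blocks $\tau_{c_t+1}\cdots\tau_{c_{t+1}-2}$, $\tau_{c_t+2}\cdots\tau_{c_{t+1}-2}$, and $\tau_{c_t+2}\cdots\tau_{c_{t+1}-1}$. Additional commutator identities are needed here: computing $[x_{c_{t+1}},\,y_1 x_{c_t+1}^k T y_2 e(\nu)]$ and expanding $T x_{c_{t+1}}$ using the iterated relation $T_1\cdots T_m X_{m+1}=X_1 T_1\cdots T_m+\sum_{i=1}^m T_1\cdots\widehat{T_i}\cdots T_m$ produces the ``$T$ with one $\tau$-factor removed'' terms $\widehat{T_i}$, among which $\widehat{T_1}=T''$ and $\widehat{T_{b-1}}=\tau_{c_t+1}\cdots\tau_{c_{t+1}-2}$ appear in the lemma; while the cyclic move $\overline{\tau_{c_{t+1}-1}\cdot y_1 x_{c_t+1}^{k+1}Ty_2 e(\nu)}=\overline{y_1 x_{c_t+1}^{k+1}Ty_2 e(\nu)\tau_{c_{t+1}-1}}$ combined with $T\tau_{c_{t+1}-1}=0$ supplies the linear combination needed to assemble $N$ in its stated three-block form.

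The case $k=0$ of both statements follows immediately: the telescoping identity is vacuous, and the summation identity reduces to $\overline{y}=\sum_{j=0}^{0}\overline{y^{(j)}}=0$, so $y\in[\R[\alpha],\R[\alpha]]$. The main technical obstacle is case (2): tracking the iteration in Step 2 through all the braid propagations and then matching the resulting single-block expression with the explicit three-block polynomial $N$ requires combining several families of commutator relations in a careful order, and verifying the coefficient counts $e+1$ in the second sum and $e$ in the third sum exactly reproduce the derivation.
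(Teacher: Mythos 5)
Your approach---deriving a telescoping relation from the commutator $[x_{c_t+1},\,\cdot\,]$ and a summation relation from cyclicity of $\tau_{c_t+1}$, then combining the two---is genuinely different from the paper's proof, which inserts the nilHecke unit $1=x_{c_{t+1}}\tau_{c_{t+1}-1}-\tau_{c_{t+1}-1}x_{c_{t+1}-1}$ and then grinds through braid moves and cyclic shifts. The telescoping identity $\overline{y^{(j)}}-\overline{y^{(j+1)}}=-\overline{u^{(j)}}$ is correct (via $Tx_{c_t+1}e(\nu)=x_{c_t+2}Te(\nu)-T''e(\nu)$), and so is the summation identity $\sum_{j=0}^{k}\overline{y^{(j)}}=0$ (via $\tau_{c_t+1}^2e(\nu)=0$ and the cyclic/braid chain $\overline{A\tau_{c_t+1}}=\overline{A\tau_{c_t+2}}=\cdots=\overline{A\tau_{c_{t+1}-1}}=0$). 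Combining them yields
\[
(k+1)\overline{y}+\sum_{e=0}^{k-1}(e+1)\,\overline{y_1\, x_{c_t+1}^{e}\, x_{c_t+2}^{k-1-e}\,T''\, y_2\, e(\nu)}=0.
\]
For case (1), $T''=e(\nu)$, and a monomial count shows that the lemma's two-sum bracket assigns coefficient $1+a$ to $x_{c_t+1}^{a}x_{c_t+2}^{k-1-a}$, matching your $e+1$. The $k=0$ assertion follows from the summation identity alone. So case (1) and the ``in particular'' conclusion are established, and via a cleaner route than the paper's.

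Case (2), however, is not proved. Your derived identity has a single $\tau$-block $T''=\tau_{c_t+2}\cdots\tau_{c_{t+1}-1}$ with coefficient $e+1$, whereas the lemma's bracket $N$ is a sum over three distinct $\tau$-products $\tau_{c_t+1}\cdots\tau_{c_{t+1}-2}$, $\tau_{c_t+2}\cdots\tau_{c_{t+1}-2}$, and $\tau_{c_t+2}\cdots\tau_{c_{t+1}-1}$, with different coefficient combinatorics (e.g., the monomial $x_{c_t+1}^{a}x_{c_t+2}^{k-1-a}$ attached to $\tau_{c_t+2}\cdots\tau_{c_{t+1}-1}$ carries coefficient $a$ in $N$, not $a+1$). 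These $\tau$-products correspond to different permutations $w$, so $N$ and your expression are genuinely different elements of $\R[\alpha]$; deriving a valid cocenter identity is not the same as proving the one asserted. You correctly identify the ingredients that should bridge the gap---the iterated expansion of $Tx_{c_{t+1}}$ into one-factor-omitted products $\widehat{T_i}$, together with further cyclic moves---but you explicitly leave this as a sketch (``the main technical obstacle''). The paper closes this very gap by expanding step by step and, crucially, applying the already-proved $k=0$ case of part (1) to a \emph{finer} decomposition of the block to kill the intermediate $\widehat{T_i}$ terms with $1<i<b_{t+1}-1$; none of that bookkeeping is carried out here. As written, case (2) has a genuine gap. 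An alternative would be to show directly that $y_1(N-N')y_2 e(\nu)\in[\R[\alpha],\R[\alpha]]$, where $N'$ is your single-block polynomial, but that too needs an argument and is not attempted.
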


\begin{proof} 1) By assumption, $b_{t+1}=2$. By the decomposition of $\nu$ given in (\ref{nu1}), we can do the following calculation in $\Tr(\R[\alpha])$:
 $$\begin{aligned}
y&\equiv y_1 x_{c_{t}+1}^k\tau_{c_{t}+1}(x_{c_{t}+2}\tau_{c_{t}+1}-\tau_{c_{t}+1}x_{c_{t}+1}) y_2e(\nu)\\
&\equiv y_1 x_{c_{t}+1}^k\tau_{c_{t}+1}x_{c_{t}+2}\tau_{c_{t}+1} y_2e(\nu)\\
&\equiv y_1 \tau_{c_{t}+1}x_{c_{t}+1}^k\tau_{c_{t}+1}x_{c_{t}+2}y_2e(\nu)\qquad \text{(as $\tau_{c_t+1}=\tau_{c_{t+1}-1}$ commutes with $y_2$ and $y_1$)}\\
&\equiv -y_1 \Bigl(\sum_{\substack{k_1,k_2\in\N\\ k_1+k_2=k-1}}x_{c_{t}+1}^{k_1}x_{c_{t}+2}^{k_2}\Bigr)\tau_{c_{t}+1}x_{c_{t}+2}y_2e(\nu)\\
&\equiv -y_1 \Bigl(\sum_{\substack{k_1,k_2\in\N\\ k_1+k_2=k-1}}x_{c_{t}+1}^{k_1}x_{c_{t}+2}^{k_2}\Bigr)(1+x_{c_t+1}\tau_{c_{t}+1})y_2e(\nu)\\
&\equiv -y_1 \Bigl(\sum_{\substack{k_1,k_2\in\N\\ k_1+k_2=k-1}}x_{c_{t}+1}^{k_1}x_{c_{t}+2}^{k_2}+\sum_{\substack{k_1,k_2\in\N\\
k_1+k_2=k-1}}x_{c_{t}+1}^{k_1+1}x_{c_{t}+2}^{k_2}\tau_{c_{t}+1}\Bigr)y_2e(\nu)\\
&\equiv -y_1 \Bigl(\sum_{\substack{k_1,k_2\in\N\\ k_1+k_2=k-1}}x_{c_{t}+1}^{k_1}x_{c_{t}+2}^{k_2}+\sum_{\substack{k_1,k_2\in\N\\ k_1+k_2=k-1}}x_{c_{t}+1}^{k_1+1}\bigl(\sum_{\substack{k'_1,k'_2\in\N\\ k_1'+k_2'=k_2-1}}x_{c_{t}+1}^{k_1'}x_{c_{t}+2}^{k_2'}+
\tau_{c_{t}+1}x_{c_{t}+1}^{k_2}\bigr)\Bigr)y_2e(\nu)\\
&\equiv -y_1 \Bigl(\sum_{\substack{k_1,k_2\in\N\\ k_1+k_2=k-1}}x_{c_{t}+1}^{k_1}x_{c_{t}+2}^{k_2}+\sum_{\substack{k_1,k_2,k'_1,k'_2\in\N\\ k_1+k_2=k-1\\ k_1'+k_2'=k_2-1}}x_{c_{t}+1}^{k_1+k_1'+1}x_{c_{t}+2}^{k_2'}+
\sum_{\substack{k_1,k_2\in\N\\ k_1+k_2=k-1}}x_{c_{t}+1}^{k_1+1}\tau_{c_{t}+1}x_{c_{t}+1}^{k_2}\Bigr)y_2e(\nu)\\
&\equiv -y_1 \Bigl(\sum_{\substack{k_1,k_2\in\N\\ k_1+k_2=k-1}}x_{c_{t}+1}^{k_1}x_{c_{t}+2}^{k_2}
+\sum_{\substack{k_1,k'_1,k'_2\in\N\\ 0\leq k_1\leq k-1\\ k_1'+k_2'=k-k_1-2}}x_{c_{t}+1}^{k_1+k_1'+1}x_{c_{t}+2}^{k_2'}+
\sum_{\substack{k_1,k_2\in\N\\ k_1+k_2=k-1}}x_{c_{t}+1}^{k_2}x_{c_{t}+1}^{k_1+1}\tau_{c_{t}+1}\Bigr)y_2e(\nu)\\
&\equiv -y_1 \Bigl(\sum_{\substack{k_1,k_2\in\N\\ k_1+k_2=k-1}}x_{c_{t}+1}^{k_1}x_{c_{t}+2}^{k_2}+\sum_{\substack{k_1,k'_1,k'_2\in\N\\ 0\leq k_1\leq k-1\\ k_1'+k_2'=k-k_1-2}}x_{c_{t}+1}^{k_1+k_1'+1}x_{c_{t}+2}^{k_2'}\Bigr)y_2e(\nu)-ky\pmod{[\R[\alpha],\R[\alpha]]},
\end{aligned}
$$
where we have used the fact that $x_{c_{t}+1}^{k_2}=x_{c_{t+1}-1}^{k_2}$ commutes with $y_2$ and $y_1$ in the second last equality. This proves (1). In particular, this implies that $y_1 \tau_{c_{t}+1} y_2e(\nu)\in[\R[\alpha],\R[\alpha]]$.

2) By assumption, $b_{t+1}>2$. By the decomposition of $\nu$ given in (\ref{nu1}), we can do the following calculation in $\Tr(\R[\alpha])$:
$$\begin{aligned}
y&\equiv y_1 x_{c_{t}+1}^k\tau_{c_{t}+1}\tau_{c_{t}+2}\cdots \tau_{c_{t+1}-1}\bigl(x_{c_{t+1}}\tau_{c_{t+1}-1}-\tau_{c_{t+1}-1}x_{c_{t+1}-1}\bigr) y_2e(\nu)\\
&\equiv y_1 x_{c_{t}+1}^k\tau_{c_{t}+1}\tau_{c_{t}+2}\cdots \tau_{c_{t+1}-1}x_{c_{t+1}}\tau_{c_{t+1}-1}y_2e(\nu)\\
&\equiv y_1 \tau_{c_{t+1}-1}x_{c_{t}+1}^k\tau_{c_{t}+1}\tau_{c_{t}+2}\cdots \tau_{c_{t+1}-1}x_{c_{t+1}} y_2e(\nu)\quad \text{(as $\tau_{c_{t+1}-1}$ commutes with $y_2$ and $y_1$)}\\
&\equiv y_1x_{c_{t}+1}^k\tau_{c_{t}+1}\tau_{c_{t}+2}\cdots \tau_{c_{t+1}-3}\tau_{c_{t+1}-1}\tau_{c_{t+1}-2}\tau_{c_{t+1}-1}x_{c_{t+1}} y_2e(\nu)\\
&\equiv y_1x_{c_{t}+1}^k\tau_{c_{t}+1}\tau_{c_{t}+2}\cdots \tau_{c_{t+1}-3}\tau_{c_{t+1}-2}\tau_{c_{t+1}-1}\tau_{c_{t+1}-2}x_{c_{t+1}} y_2e(\nu)\\
&\equiv y_1\tau_{c_{t+1}-2}x_{c_{t}+1}^k\tau_{c_{t}+1}\tau_{c_{t}+2}\dots \tau_{c_{t+1}-3}\tau_{c_{t+1}-2}\tau_{c_{t+1}-1}x_{c_{t+1}} y_2e(\nu)\quad \text{(as $\tau_{c_{t+1}-2}$ commutes with $x_{c_{t+1}}, y_2, y_1$)}
\end{aligned} $$
We repeat the previous argument with $\tau_{c_{t+1}-1}$ replaced by $\tau_{c_{t+1}-2},\cdots,\tau_{c_t+3}$, and so on. Eventually, we shall get that
$$\begin{aligned}
y&\equiv y_1\tau_{c_{t}+2}x_{c_{t}+1}^k\tau_{c_{t}+1}\tau_{c_{t}+2}\cdots \tau_{c_{t+1}-2}\tau_{c_{t+1}-1}x_{c_{t+1}} y_2e(\nu)\\
&\equiv y_1x_{c_{t}+1}^k\tau_{c_{t}+2}\tau_{c_{t}+1}\tau_{c_{t}+2}\tau_{c_{t}+3}\cdots \tau_{c_{t+1}-2}\tau_{c_{t+1}-1}x_{c_{t+1}} y_2e(\nu)\\
&\equiv y_1x_{c_{t}+1}^k\tau_{c_{t}+1}\tau_{c_{t}+2}\tau_{c_{t}+1}\tau_{c_{t}+3}\cdots \tau_{c_{t+1}-2}\tau_{c_{t+1}-1}x_{c_{t+1}} y_2e(\nu)\\
&\equiv y_1x_{c_{t}+1}^k\tau_{c_{t}+1}\tau_{c_{t}+2}\tau_{c_{t}+3}\cdots \tau_{c_{t+1}-2}\tau_{c_{t+1}-1}x_{c_{t+1}} y_2e(\nu)\tau_{c_{t}+1}\\
&\equiv y_1\tau_{c_{t}+1}x_{c_{t}+1}^k\tau_{c_{t}+1}\tau_{c_{t}+2}\cdots \tau_{c_{t+1}-2}\tau_{c_{t+1}-1}x_{c_{t+1}} y_2e(\nu)\pmod{[\R[\alpha],\R[\alpha]]},
\end{aligned}
$$
Hence, if $k=0$, then we can deduce  $$y\equiv y_1\tau_{c_{t}+1}\tau_{c_{t}+1}\tau_{c_{t}+1+1}\cdots \tau_{c_{t+1}-2}\tau_{c_{t+1}-1}x_{c_{t+1}} y_2e(\nu)\equiv 0\pmod{[\R[\alpha],\R[\alpha]]}.
$$

Now suppose $k>0$. We can deduce$$\begin{aligned}
y&\equiv y_1\bigl(\tau_{c_{t}+1}x_{c_{t}+1}^k\bigr)\tau_{c_{t}+1}\tau_{c_{t}+2}\cdots \tau_{c_{t+1}-2}\tau_{c_{t+1}-1}x_{c_{t+1}} y_2e(\nu)\\
&\equiv -y_1\Bigl(\sum_{\substack{k_1,k_2\in\N\\ k_1+k_2=k-1}}x_{c_{t}+1}^{k_1}x_{c_{t}+2}^{k_2}\Bigr)\tau_{c_{t}+1}\tau_{c_{t}+2}\cdots \tau_{c_{t+1}-2}\tau_{c_{t+1}-1}x_{c_{t+1}} y_2e(\nu)\\
&\equiv -y_1\Bigl(\sum_{\substack{k_1,k_2\in\N\\ k_1+k_2=k-1}}x_{c_{t}+1}^{k_1}x_{c_{t}+2}^{k_2}\Bigr)\tau_{c_{t}+1}\tau_{c_{t}+2}\cdots \tau_{c_{t+1}-2}\bigl(x_{c_{t+1}-1}\tau_{c_{t+1}-1}+1) y_2e(\nu\bigr)\\
&\equiv -y_1\Bigl(\sum_{\substack{k_1,k_2\in\N\\ k_1+k_2=k-1}}x_{c_{t}+1}^{k_1}x_{c_{t}+2}^{k_2}\Bigr)\tau_{c_{t}+1}\tau_{c_{t}+2}\cdots \tau_{c_{t+1}-2} y_2e(\nu)\\
&\quad -y_1\Bigl(\sum_{\substack{k_1,k_2\in\N\\ k_1+k_2=k-1}}x_{c_{t}+1}^{k_1}x_{c_{t}+2}^{k_2}\Bigr)\tau_{c_{t}+1}\tau_{c_{t}+2}\cdots \tau_{c_{t+1}-3}(\tau_{c_{t+1}-2}x_{c_{t+1}-1})\tau_{c_{t+1}-1} y_2e(\nu)\\
&\equiv -y_1\Bigl(\sum_{\substack{k_1,k_2\in\N\\ k_1+k_2=k-1}}x_{c_{t}+1}^{k_1}x_{c_{t}+2}^{k_2}\Bigr)\tau_{c_{t}+1}\tau_{c_{t}+2}\cdots \tau_{c_{t+1}-2} y_2e(\nu)\\
&\quad -y_1\Bigl(\sum_{\substack{k_1,k_2\in\N\\ k_1+k_2=k-1}}x_{c_{t}+1}^{k_1}x_{c_{t}+2}^{k_2}\Bigr)\tau_{c_{t}+1}\tau_{c_{t}+2}\cdots x_{c_{t+1}-2}\tau_{c_{t+1}-2}\tau_{c_{t+1}-1} y_2e(\nu)\\
&\quad -y_1\Bigl(\sum_{\substack{k_1,k_2\in\N\\ k_1+k_2=k-1}}x_{c_{t}+1}^{k_1}x_{c_{t}+2}^{k_2}\Bigr)\tau_{c_{t}+1}\tau_{c_{t}+2}\cdots \tau_{c_{t+1}-3}\tau_{c_{t+1}-1} y_2e(\nu)\pmod{[\R[\alpha],\R[\alpha]]}.
\end{aligned}
$$
Now, for the last term above, $$
-y_1\Bigl(\sum_{\substack{k_1,k_2\in\N\\ k_1+k_2=k-1}}x_{c_{t}+1}^{k_1}x_{c_{t}+2}^{k_2}\Bigr)\tau_{c_{t}+1}\tau_{c_{t}+2}\cdots \tau_{c_{t+1}-3}e(\nu)\in A_{c_{t+1}-2},\quad y_2e(\nu)\in B_{c_{t+1}}.
$$
It follows from the $k=0$ case in the part (1) of the lemma that the last term must vanish in $\Tr(\R[\alpha])$. Therefore, $$\begin{aligned}
y&\equiv -y_1\bigl(\sum_{\substack{k_1,k_2\in\N\\ k_1+k_2=k-1}}x_{c_{t}+1}^{k_1}x_{c_{t}+2}^{k_2}\bigr)\tau_{c_{t}+1}\tau_{c_{t}+2}\cdots \tau_{c_{t+1}-2} y_2e(\nu)\\
&\quad -y_1\bigl(\sum_{\substack{k_1,k_2\in\N\\ k_1+k_2=k-1}}x_{c_{t}+1}^{k_1}x_{c_{t}+2}^{k_2}\bigr)\tau_{c_{t}+1}\tau_{c_{t}+2}\cdots \tau_{c_{t+1}-3}x_{c_{t+1}-2}\tau_{c_{t+1}-2}\tau_{c_{t+1}-1} y_2e(\nu)\\
&\equiv -y_1\bigl(\sum_{\substack{k_1,k_2\in\N\\ k_1+k_2=k-1}}x_{c_{t}+1}^{k_1}x_{c_{t}+2}^{k_2}\bigr)\tau_{c_{t}+1}\tau_{c_{t}+2}\cdots \tau_{c_{t+1}-2} y_2e(\nu)\\
&\quad-y_1\bigl(\sum_{\substack{k_1,k_2\in\N\\ k_1+k_2=k-1}}x_{c_{t}+1}^{k_1}x_{c_t+2}^{k_2}\bigr)\tau_{c_{t}+1}\tau_{c_{t}+1+1}\cdots \tau_{c_{t+1}-4}\bigl(\tau_{c_{t+1}-3}x_{c_{t+1}-2}\bigr)\tau_{c_{t+1}-2}\tau_{c_{t+1}-1} y_2e(\nu)\\
&\equiv -y_1\bigl(\sum_{\substack{k_1,k_2\in\N\\ k_1+k_2=k-1}}x_{c_{t}+1}^{k_1}x_{c_{t}+2}^{k_2}\bigr)\tau_{c_{t}+1}\tau_{c_{t}+2}\cdots \tau_{c_{t+1}-2} y_2e(\nu)\\
&\quad-y_1\bigl(\sum_{\substack{k_1,k_2\in\N\\ k_1+k_2=k-1}}x_{c_{t}+1}^{k_1}x_{c_t+2}^{k_2}\bigr)\tau_{c_{t}+1}\tau_{c_{t}+2}\cdots x_{c_{t+1}-3}\tau_{c_{t+1}-3}\tau_{c_{t+1}-2}\tau_{c_{t+1}-1} y_2e(\nu)\\
&\quad -y_1\bigl(\sum_{\substack{k_1,k_2\in\N\\ k_1+k_2=k-1}}x_{c_{t}+1}^{k_1}x_{c_{t}+2}^{k_2}\bigr)\tau_{c_{t}+1}\tau_{c_{t}+2}\cdots \tau_{c_{t+1}-4}\tau_{c_{t+1}-2}\tau_{c_{t+1}-1} y_2e(\nu)\pmod{[\R[\alpha],\R[\alpha]]}.
\end{aligned}
$$
Again, the $k=0$ case in the part (2) of the lemma implies that the last term above must vanish in $\Tr(\R[\alpha])$. We repeat the same argument above and eventually we shall get that  $$\begin{aligned}
y&\equiv -y_1\bigl(\sum_{\substack{k_1,k_2\in\N\\ k_1+k_2=k-1}}x_{c_{t}+1}^{k_1}x_{c_{t}+2}^{k_2}\bigr)\tau_{c_{t}+1}\tau_{c_{t}+2}\cdots \tau_{c_{t+1}-2} y_2e(\nu)\\
&\quad -y_1\bigl(\sum_{\substack{k_1,k_2\in\N\\ k_1+k_2=k-1}}x_{c_{t}+1}^{k_1+1}x_{c_t+2}^{k_2}\bigr)\tau_{c_{t}+1}\tau_{c_{t}+2}\cdots\tau_{c_{t+1}-1} y_2e(\nu)\pmod{[\R[\alpha],\R[\alpha]]}.
\end{aligned}
$$

Finally, we have $$\begin{aligned} &\quad\,-y_1\bigl(\sum_{k_1+k_2=k-1}x_{c_{t}+1}^{k_1}x_{c_{t}+2}^{k_2}\bigr)\tau_{c_{t}+1}\tau_{c_{t}+2}\cdots \tau_{c_{t+1}-2} y_2e(\nu)\\
&\equiv -y_1\Bigl(\sum_{\substack{k_1,k_2\in\N\\ k_1+k_2=k-1}}x_{c_{t}+1}^{k_1}\bigl(\tau_{c_{t}+1}x_{c_{t}+1}^{k_2}+\sum_{\substack{k'_1,k'_2\in\N\\ k_1'+k_2'=k_2-1}}x_{c_{t}+1}^{k_1'}x_{c_t+2}^{k_2'}\bigr)\Bigr)\tau_{c_{t}+2}
\cdots \tau_{c_{t+1}-2} y_2e(\nu)\\
&\equiv -y_1\Bigl(\sum_{\substack{k_1,k_2\in\N\\ k_1+k_2=k-1}}x_{c_{t}+1}^{k_1+k_2}\tau_{c_{t}+1}+\sum_{\substack{k_1,k_2,k'_1,k'_2\in\N\\ k_1+k_2=k-1\\k_1'+k_2'=k_2-1}}x_{c_{t}+1}^{k_1+k_1'}x_{c_{t}+2}^{k_2'}\Bigr)
\tau_{c_{t}+2}\cdots \tau_{c_{t+1}-2} y_2e(\nu)\\
&\equiv -ky_1 x_{c_{t}+1}^{k-1}\tau_{c_{t}+1}\tau_{c_{t}+2}\cdots \tau_{c_{t+1}-2} y_2e(\nu)-y_1\bigl(\sum_{\substack{k_1,k'_1,k'_2\in\N,\\ 0\leq k_1\leq k-1,\\ k_1'+k_2'=k-k_1-2}}x_{c_{t}+1}^{k_1+k_1'}x_{c_{t}+2}^{k_2'}\bigr)\tau_{c_{t}+2}\cdots \tau_{c_{t+1}-2} y_2e(\nu)\\
&\qquad\quad\pmod{[\R[\alpha],\R[\alpha]]},
\end{aligned}
$$
where we have used the fact that $x_{c_{t}+1}^{k_2}$ commutes with $\tau_{c_{t}+2}
\cdots \tau_{c_{t+1}-2}y_2e(\nu), y_1$  and moved $x_{c_{t}+1}^{k_2}$ from the right end to the left end in the second equality.

Similarly, we have  $$\begin{aligned}
&\quad\,-y_1\Bigl(\sum_{\substack{k_1,k_2\in\N\\ k_1+k_2=k-1}}x_{c_{t}+1}^{k_1+1}x_{c_{t}+2}^{k_2}\Bigr)\tau_{c_{t}+1}\tau_{c_{t}+2}\cdots\tau_{c_{t+1}-1} y_2e(\nu)\\
&\equiv -y_1\Bigl(\sum_{\substack{k_1,k_2\in\N\\ k_1+k_2=k-1}}x_{c_{t}+1}^{k_1+1}\bigl(\tau_{c_{t}+1}x_{c_{t}+1}^{k_2}+\sum_{\substack{k_1,k_2,k'_1,k'_2\in\N\\ k_1'+k_2'=k_2-1}}x_{c_{t}+1}^{k_1'}x_{c_{t}+2}^{k_2'}\bigr)\Bigr)
\tau_{c_{t}+2}\cdots \tau_{c_{t+1}-1} y_2e(\nu)\\
&\equiv -y_1\Bigl(\sum_{\substack{k_1,k_2\in\N\\ k_1+k_2=k-1}}x_{c_{t}+1}^{k_1+k_2+1}\tau_{c_{t}+1}+\sum_{\substack{k_1,k'_1,k'_2\in\N\\ 0\leq k_1\leq k-1,\\ k_1'+k_2'=k-k_1-2}}x_{c_{t}+1}^{k_1+k_1'+1}x_{c_{t}+2}^{k_2'}\Bigr)
\tau_{c_{t}+2}\cdots \tau_{c_{t+1}-1} y_2e(\nu)\\
&\equiv -ky-y_1(\sum_{\substack{k_1,k'_1,k_2\in\N\\ 0\leq k_1\leq k-1,\\ k_1'+k_2'=k-k_1-2}}x_{c_{t}+1}^{k_1+k_1'+1}x_{c_{t}+2}^{k_2'})\tau_{c_{t}+2}\cdots \tau_{c_{t+1}-1} y_2e(\nu)\pmod{[\R[\alpha],\R[\alpha]]},
\end{aligned}
$$
where we have used the fact that $x_{c_{t}+1}^{k_2}$ commutes with $\tau_{c_{t}+2}
\cdots \tau_{c_{t+1}-1}y_2e(\nu), y_1$  and moved $x_{c_{t}+1}^{k_2}$ from the right end to the left end in the second equality.
Now the Lemma follows from the last two paragraphs.
\end{proof}

\begin{cor}\label{zero element}
Suppose $\cha K=0$. Let $y=y_1 x_{c_{t}+1}^k\tau_{c_{t}+1}\tau_{c_{t}+2}\cdots \tau_{c_{t+1}-1} y_2e(\nu)$, where $y_1\in A_{c_t}, y_2\in B_{c_{t+1}}$. If $k<b_{t+1}-1$, then $y\in [\R[\alpha],\R[\alpha]]$.
\end{cor}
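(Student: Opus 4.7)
The plan is to prove the corollary by induction on the block size $b_{t+1}$, using Lemma \ref{relations} as the key engine and the characteristic-zero hypothesis to invert the coefficient $k+1$.

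For the base case $b_{t+1}=2$, the inequality $k<b_{t+1}-1$ forces $k=0$, and the ``in particular'' clause of Lemma \ref{relations}(1) gives $y\in [\R[\alpha],\R[\alpha]]$ directly.

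For the inductive step $b_{t+1}>2$, I would apply Lemma \ref{relations}(2), which produces a congruence $(k+1)y+y_1 E y_2 e(\nu)\equiv 0\pmod{[\R[\alpha],\R[\alpha]]}$, where $E$ is the sum of three types of monomials: (i) $k\, x_{c_t+1}^{k-1}\tau_{c_t+1}\cdots\tau_{c_{t+1}-2}$; (ii) $x_{c_t+1}^{k_1+k_1'}x_{c_t+2}^{k_2'}\tau_{c_t+2}\cdots\tau_{c_{t+1}-2}$ with $0\leq k_1\leq k-2$ and $k_1'+k_2'=k-2-k_1$; and (iii) $x_{c_t+1}^{k_1+k_1'}x_{c_t+2}^{k_2'}\tau_{c_t+2}\cdots\tau_{c_{t+1}-1}$ with $1\leq k_1\leq k-1$ and $k_1'+k_2'=k-1-k_1$. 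Since $\cha K=0$, the scalar $k+1$ is invertible, so it suffices to show that each of these three contributions lies in $[\R[\alpha],\R[\alpha]]$.

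Each such contribution is reinterpreted as an instance of the corollary with a strictly smaller block size, by replacing $\mathbf{b}$ with a refined composition $\mathbf{b}'\in\mathcal{C}(\nu)$: this is legitimate since the defining condition of $\mathcal{C}(\nu)$ requires only that consecutive entries inside each part share a residue, not that the blocks be maximal. For type (i), split the $(t+1)$-th block as $[c_t+1,c_{t+1}-1]\sqcup\{c_{t+1}\}$; the monomial then has new block size $b_{t+1}-1$ and new $x$-exponent $k-1$, with $y_2\in B_{c_{t+1}}\subset B_{c_{t+1}-1}$, and the required bound $k-1<(b_{t+1}-1)-1$ is exactly our hypothesis. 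For types (ii) and (iii), one further peels off $\{c_t+1\}$ on the left (and, in type (ii), also $\{c_{t+1}\}$ on the right); the factor $x_{c_t+1}^{k_1+k_1'}$ then gets absorbed into the enlarged head algebra $A_{c_t+1}$, which is permissible because every generator of $A_{c_t}$ commutes with $x_{c_t+1}$. The resulting $x_{c_t+2}$-exponent is $k_2'\leq k-2$, which under $k<b_{t+1}-1$ satisfies $k_2'<b_{t+1}-3$ for type (ii) and $k_2'<b_{t+1}-2$ for type (iii), matching the hypothesis of the corollary at the smaller block sizes $b_{t+1}-2$ and $b_{t+1}-1$ respectively; the inductive hypothesis then closes each case.

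The only delicate point is bookkeeping: one must systematically verify that every error term produced by Lemma \ref{relations}(2) can be realized as a genuine instance of the corollary at a strictly smaller block size under an appropriate refinement of $\mathbf{b}$, and that the exponent bound $k_{\mathrm{new}}<b_{\mathrm{new}}-1$ propagates correctly. No algebraic input beyond Lemma \ref{relations} is required; the hypothesis $\cha K=0$ enters only through the need to invert the coefficient $k+1$.
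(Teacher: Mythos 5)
Your proof is correct and is essentially the same argument as the paper's, which also proceeds by repeatedly applying Lemma~\ref{relations} and inverting the coefficient $k+1$. The only cosmetic difference is the choice of primary induction variable: the paper inducts on the exponent $k$ (noting that every error term in Lemma~\ref{relations}(2) strictly decreases $k$), whereas you induct on the block size $b_{t+1}$; both choices close because, as you verify, all three types of error terms simultaneously decrease both parameters while preserving the constraint $k_{\mathrm{new}}<b_{\mathrm{new}}-1$. The paper's proof is terse and leaves the bookkeeping implicit — it merely asserts that ``Lemma~\ref{relations} gives an algorithm to rewrite the element $y$ with smaller $b_{t+1}$ and $k$'' — so your explicit verification that each error term can be realized as a bona fide instance of the corollary for a refined composition $\mathbf{b}'\in\mathcal{C}(\nu)$ (with the factor $x_{c_t+1}^{k_1+k_1'}$ absorbed into the head algebra $A_{c_t+1}$ and $y_2e(\nu)$ landing in the appropriate tail algebra $B_{c_{t+1}-1}$) is a genuine, worthwhile addition to the argument rather than a deviation from it.
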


\begin{proof} Since $\cha K=0$, it follows that $(k+1)\cdot 1_K$ is invertible in $K$. If $k=0$, then the corollary follows from Lemma \ref{relations}.
In general, Lemma \ref{relations} gives an algorithm to rewrite the element $y$ with smaller $b_{t+1}$ and $k$. So the corollary follows from an induction on $k$.
\end{proof}

\bigskip

\section{Piecewise dominant sequence and maximal degree and minimal degree components of the cocenter}

The purpose of this section is to give the three main results of this paper. We shall introduce a new notion called ``piecewise dominant sequence'' and use it together with the main result in previous section to construct $K$-linear generators of both the maximal degree component and the minimal degree component of the cocenter $\Tr(\R[\alpha])$. In particular, we shall derive a new and simple criterion for which $\R[\alpha]\neq 0$.

Let $\nu\in I^\alpha$. There is a unique decomposition of $\nu$ as follows: \begin{equation}\label{decomp4}
\nu=(\nu_1,\cdots,\nu_n)=(\underbrace{\nu^{1},\nu^{1},\cdots,\nu^{1}}_{b_{1}\,copies},\cdots,\underbrace{\nu^{p},\nu^{p},\cdots,\nu^{p}}_{b_{p}\,copies}),
\end{equation}
which satisfies that \begin{equation}\label{newrequire}
\nu^{j}\neq\nu^{j+1},\quad \forall\, 1\leq j<p,
\end{equation}
where $p,b_1,\cdots,b_p\in\Z^{\geq 1}$ with $\sum_{i=1}^{p}b_i=n$.
For each $1\leq i\leq p$, we define \begin{equation}\label{elli30}
\ell_i(\nu):=\<h_{\nu^i}, \Lam-\sum_{j=1}^{c_{i-1}}\alpha_{\nu_j}\>,
\end{equation}
where $\{c_j|0\leq j\leq p\}$ is as defined in (\ref{cj1}). 
When $\nu$ is clear from the context, we shall write $\ell_i$ instead of $\ell_i(\nu)$ for simplicity.

\subsection{Piecewise dominant sequence}


\begin{dfn}\label{pddfn} Let $\Lam\in P^+$ and $\alpha\in Q_n^+$. We call $\nu=(\nu_1,\cdots,\nu_n)\in I^\alpha$ a {\bf piecewise dominant sequence} with respect to $\Lam$, if for the unique decomposition (\ref{decomp4}) of $\nu$ and any $1\leq i\leq p$, \begin{equation}\label{elli}
\ell_i=\ell_i(\nu)\geq b_i.
\end{equation}
\end{dfn}

\begin{examp}\label{examp1} Consider the cyclotomic nilHecke algebra $\NH_n^\ell$ of type $A$. Let $\nu=(\underbrace{0,0,\cdots,0}_{n\,copies})$. Then an easy computation shows that $\nu$ is piecewise dominant if and only if $\ell\geq n$, i.e., $\NH_n^\ell\neq 0$.
\end{examp}

\begin{lem}\label{princlple criterion} Let $\nu=(\nu_1,\cdots,\nu_n)\in I^\alpha$ and fix the unique decomposition (\ref{decomp4}) of $\nu$. Then $\nu$ is a piecewise dominant sequence with respect to $\Lam$ if and only if for each $1\leq i\leq p$, there is an integer $c_{i-1}+1\leq k'_i\leq c_{i}$ such that
\begin{equation}\label{pdCondition}
\<h_{\nu_{k'_i}}, \Lam-\sum_{j=1}^{k'_i-1}\alpha_{\nu_j}\>\geq c_i-k'_i+1.\end{equation}
In this case, we denote the maximal value of each $k'_i$ by $k_i$, which can be taken as: \begin{equation}\label{maxki}
k_i=\begin{cases} c_i, &\text{if $\ell_i-2b_i\geq 0$;}\\
\ell_i+2c_{i-1}-c_{i}+1, &\text{if $\ell_i-2b_i\leq -1$.}
\end{cases}
\end{equation}
\end{lem}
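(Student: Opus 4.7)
The plan is to convert the displayed inequality \eqref{pdCondition} into a simple linear inequality in $k'_i$, work out when such $k'_i$ exists in the required range, and then read off the maximum value by a case distinction.

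First, fix $i$ and any integer $k$ with $c_{i-1}+1\le k\le c_i$. Since $\nu_k=\nu^i$ and $\nu_j=\nu^i$ for all $c_{i-1}<j\le k-1$, and since $\langle h_{\nu^i},\alpha_{\nu^i}\rangle=a_{\nu^i\nu^i}=2$, one computes
\[
\bigl\langle h_{\nu_k},\,\Lam-\sum_{j=1}^{k-1}\alpha_{\nu_j}\bigr\rangle
=\bigl\langle h_{\nu^i},\,\Lam-\sum_{j=1}^{c_{i-1}}\alpha_{\nu_j}\bigr\rangle-\sum_{j=c_{i-1}+1}^{k-1}\langle h_{\nu^i},\alpha_{\nu^i}\rangle
=\ell_i-2\bigl(k-1-c_{i-1}\bigr).
\]
Hence the inequality \eqref{pdCondition} at $k'_i=k$ is equivalent to
\[
\ell_i-2(k-1-c_{i-1})\ge c_i-k+1,\qquad\text{i.e.,}\qquad k\le \ell_i+2c_{i-1}-c_i+1.
\]

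Next I would analyse existence. A value $k'_i$ as required exists in the interval $[c_{i-1}+1,c_i]$ if and only if $\ell_i+2c_{i-1}-c_i+1\ge c_{i-1}+1$, which simplifies to $\ell_i\ge c_i-c_{i-1}=b_i$. This is exactly the piecewise dominant condition \eqref{elli}, proving the equivalence in the first assertion of the lemma.

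For the explicit formula for $k_i$, observe that the set of admissible $k'_i$ is the integer interval $[c_{i-1}+1,\,\min(c_i,\ell_i+2c_{i-1}-c_i+1)]$, so
\[
k_i=\min\bigl(c_i,\ \ell_i+2c_{i-1}-c_i+1\bigr).
\]
A direct computation shows $\ell_i+2c_{i-1}-c_i+1\ge c_i+1$ iff $\ell_i\ge 2b_i$, i.e., $\ell_i-2b_i\ge 0$, in which case $k_i=c_i$; otherwise $\ell_i-2b_i\le -1$ forces $\ell_i+2c_{i-1}-c_i+1\le c_i$, so $k_i=\ell_i+2c_{i-1}-c_i+1$. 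This matches \eqref{maxki}, completing the proof.

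The argument is essentially a bookkeeping exercise; the only subtlety is being careful with the endpoints of the integer interval in the case split, since the two cases $\ell_i-2b_i\ge 0$ and $\ell_i-2b_i\le -1$ must cover all integers $\ell_i$ without overlap (which they do, matching the boundary $k_i=c_i$ in both cases when $\ell_i=2b_i$).
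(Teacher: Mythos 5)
Your argument is correct and follows essentially the same route as the paper: rewrite the pairing $\bigl\langle h_{\nu_k},\,\Lam-\sum_{j=1}^{k-1}\alpha_{\nu_j}\bigr\rangle$ as $\ell_i-2(k-1-c_{i-1})$ using $a_{\nu^i\nu^i}=2$, reduce \eqref{pdCondition} to $k\le \ell_i+2c_{i-1}-c_i+1$, and note that a valid $k$ exists in $[c_{i-1}+1,c_i]$ iff $\ell_i\ge b_i$; the paper does the same computation, just compressed into the displayed system of inequalities in the converse direction (and, for the forward direction, simply takes $k'_i=c_{i-1}+1$). One small slip in your closing remark: the value of $\ell_i$ at which the two branches of \eqref{maxki} both yield $k_i=c_i$ is $\ell_i=2b_i-1$, not $\ell_i=2b_i$ (at $\ell_i=2b_i$ the second formula would give $c_i+1$, but that case lies in the first branch anyway), though this does not affect the correctness of the main argument.
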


\begin{proof} Suppose that $\nu$ is a piecewise dominant sequence with respect to $\Lam$ with a unique decomposition as in (\ref{decomp4}). Then for each $1\leq i\leq p$, we can simply take $k'_i:=c_{i-1}+1$. This proves one direction.

Conversely, suppose that for the unique decomposition of $\nu$ as in (\ref{decomp4}) and for each $1\leq i\leq p$, there is an integer $c_{i-1}+1\leq k'_i\leq c_{i}$ such that (\ref{pdCondition}) holds. Then we take $k_i$ to be the maximal value of $k'_i$ such that (\ref{maxki}) holds. Let $1\leq i\leq p$. Recall that $$
\ell_i=\ell_i(\nu)=\<h_{\nu^i}, \Lam-\sum_{j=1}^{i-1}b_j\alpha_{\nu^j}\> .
$$
If $\ell_i-2b_i \geq 0$, then we can take $k_i=c_{i}$ such that (\ref{pdCondition}) holds. If $\ell_i-2b_i\leq -1$, then it is easy to see that
the following inequalities :\[
\begin{cases}
\ell_i-2(k_i-c_{i-1})\geq c_{i}-1-k_i\\
\ell_i-2(k_i+1-c_{i-1})<c_{i}-1-(k_i+1) \\
c_{i-1}+1\leq k_i\leq c_{i}.\\
\end{cases}\] has a unique solution $k_i:=\ell_i+2c_{i-1}-c_{i}+1$. Combining this with the third inequality we can deduce that $\ell_i\geq b_i$, we prove that $\nu$ is a piecewise dominant sequence with respect to $\Lam$.
\end{proof}


\begin{dfn}\label{znudfn} Let $\nu\in I^\alpha$ be a piecewise dominant sequence with the unique decomposition as in (\ref{decomp4}). We define $$
Z(\nu)=Z(\nu)_1Z(\nu)_2\cdots Z(\nu)_p, $$
where for each $1\leq i\leq p$, $$
Z(\nu)_i:=\begin{cases}
x_{c_{i-1}+1}^{\ell_i-1}
x_{c_{i-1}+2}^{\ell_i-3}\cdots  x_{c_i}^{\ell_i-2b_i+1}e(\nu), &\text{if $\ell_i\geq 2b_i$;}\\
x_{c_{i-1}+1}^{\ell_i-1}
x_{c_{i-1}+2}^{\ell_i-3}\cdots  x_{\ell_i+2c_{i-1}-c_i}^{2b_i-\ell_i+1}e(\nu), &\text{if $b_i<\ell_i\leq 2b_i-1$;}\\
e(\nu), &\text{if $\ell_i=b_i$.}
\end{cases}
$$
\end{dfn}

%
%

\begin{lem}\label{degree1} Suppose $\nu\in I^\alpha$ is a piecewise dominant sequence with respect to $\Lam$, then $\deg(Z(\nu))=d_{\Lam,\alpha}$.
\end{lem}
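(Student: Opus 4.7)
The plan is a direct degree count. Since $\deg(x_k e(\nu)) = (\alpha_{\nu_k},\alpha_{\nu_k}) = 2 d_{\nu_k}$, the element $Z(\nu)$ being a pure monomial in the $x$'s, its total degree is just $\sum_i 2 d_{\nu^i} \cdot (\text{sum of exponents in block }i)$. So I will first show that, remarkably, in all three cases of Definition \ref{znudfn}, the sum of exponents of $Z(\nu)_i$ equals the single uniform quantity $b_i \ell_i - b_i^2$, and then verify that summing the contributions over $i$ gives exactly $d_{\Lam,\alpha}$.

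For the first step, I would compute each case separately. In the case $\ell_i \geq 2b_i$, the exponents are $\ell_i-1, \ell_i-3, \ldots, \ell_i-2b_i+1$, an arithmetic progression of length $b_i$ with sum
\[
\sum_{j=0}^{b_i-1}(\ell_i-1-2j) = b_i(\ell_i-1) - b_i(b_i-1) = b_i\ell_i - b_i^2.
\]
In the case $b_i < \ell_i \leq 2b_i-1$, noting $\ell_i+2c_{i-1}-c_i = c_{i-1}+(\ell_i-b_i)$, the exponents are $\ell_i-1, \ell_i-3, \ldots, 2b_i-\ell_i+1$, a progression of length $\ell_i-b_i$ with sum
\[
(\ell_i-b_i)(\ell_i-1) - (\ell_i-b_i)(\ell_i-b_i-1) = (\ell_i-b_i)\cdot b_i = b_i\ell_i - b_i^2.
\]
The case $\ell_i = b_i$ contributes $0 = b_i\ell_i - b_i^2$. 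Hence in every case $\deg Z(\nu)_i = 2 d_{\nu^i}(b_i\ell_i - b_i^2)$.

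For the second step, the key identity is obtained by rewriting $\ell_i$ using $\<h_i,\lambda\> = 2(\alpha_i,\lambda)/(\alpha_i,\alpha_i)$. Because the decomposition (\ref{decomp4}) gives $\sum_{j=1}^{c_{i-1}}\alpha_{\nu_j} = \sum_{k<i} b_k \alpha_{\nu^k}$, we obtain
\[
d_{\nu^i}\ell_i = (\alpha_{\nu^i},\Lam) - \sum_{k<i} b_k(\alpha_{\nu^i},\alpha_{\nu^k}).
\]
Multiplying by $2 b_i$ and summing over $i$,
\[
\sum_i 2 b_i d_{\nu^i} \ell_i \;=\; 2(\Lam,\alpha) - 2\sum_{k<i} b_i b_k (\alpha_{\nu^i},\alpha_{\nu^k}),
\]
where I used $\alpha = \sum_i b_i \alpha_{\nu^i}$. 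On the other hand, expanding $(\alpha,\alpha)$ and using $(\alpha_{\nu^i},\alpha_{\nu^i}) = 2d_{\nu^i}$ gives
\[
(\alpha,\alpha) = \sum_i 2 b_i^2 d_{\nu^i} + 2\sum_{k<i} b_i b_k (\alpha_{\nu^i},\alpha_{\nu^k}).
\]
Combining the two displayed equations, the off-diagonal cross terms cancel and
\[
\sum_i 2 d_{\nu^i}(b_i \ell_i - b_i^2) \;=\; 2(\Lam,\alpha) - (\alpha,\alpha) \;=\; d_{\Lam,\alpha},
\]
which completes the proof.

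I do not expect any serious obstacle here; the verification is essentially bookkeeping. The only subtlety is recognising that the three cases in Definition \ref{znudfn} were chosen precisely so that the sum of exponents is the same uniform polynomial $b_i\ell_i - b_i^2$ in $(b_i,\ell_i)$ regardless of the relative sizes of $\ell_i$ and $2b_i$; once this is noticed, the rest is a one-line application of the symmetric bilinear form $(,)$ and the definition of $d_{\Lam,\alpha}$.
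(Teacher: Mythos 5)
Your proposal is correct and follows essentially the same approach as the paper: compute $\deg Z(\nu)_i$ in each of the three cases, observe that all three collapse to the single formula $(\alpha_{\nu^i},\alpha_{\nu^i})(\ell_i-b_i)b_i = 2d_{\nu^i}(b_i\ell_i - b_i^2)$, and then expand $d_{\Lam,\alpha}=2(\Lam,\alpha)-(\alpha,\alpha)$ via $\alpha=\sum_i b_i\alpha_{\nu^i}$ and the identity $\<h_i,\lambda\>=2(\alpha_i,\lambda)/(\alpha_i,\alpha_i)$ to match. The only difference is cosmetic: you work with $d_{\nu^i}$ directly while the paper keeps $(\alpha_{\nu^i},\alpha_{\nu^i})$ throughout.
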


\begin{proof} Let $\nu$ be a piecewise dominant sequence with respect to $\Lam$ with a decomposition as in (\ref{decomp4}) which satisfies (\ref{newrequire}).
There are two cases:

\smallskip
{\it Case 1.} $\ell_i-2b_i\geq 0$. In this case, by definition, $$
Z(\nu)_i=x_{c_{i-1}+1}^{\ell_i-1}x_{c_{i-1}+2}^{\ell_i-3}\cdots x_{c_{i}}^{\ell_i-2b_i+1}e(\nu).
$$
 A direct computation shows $\deg(Z(\nu)_i)=(\alpha_{\nu^i},\alpha_{\nu^i})(\ell_i-b_i)b_i$.

\smallskip
{\it Case 2.} $\ell_i=b_i$. In this case, $\deg(Z(\nu)_i)=\deg(e(\nu))=0$.

\smallskip
{\it Case 3.} $b_i<\ell_i-2b_i\leq -1$. In this case, by definition, $$\begin{aligned}
\deg(Z(\nu)_i)&=(\alpha_{\nu^i},\alpha_{\nu^i})\bigl(\ell_i-1+\ell_i-3+\cdots+2b_i-\ell_i+1\bigr)\\
&=(\alpha_{\nu^i},\alpha_{\nu^i})(\ell_i-b_i)b_i.
\end{aligned}
$$
In both cases we have $$\begin{aligned}
d_{\Lam,\alpha}&=2(\alpha,\Lam)-(\alpha,\alpha)\\
&=\sum_{i=1}^p(\alpha_{\nu^i},\alpha_{\nu^i})b_i\<h_{\nu^i},\Lam\>-
2\sum_{i=1}^p\sum_{1\leq j<i}b_ib_j(\alpha_{\nu^i},\alpha_{\nu^j})-\sum_{i=1}^pb_i^2 (\alpha_{\nu^i},\alpha_{\nu^i})\\
&=\sum_{i=1}^{p}(\alpha_{\nu^i},\alpha_{\nu^i})(\ell_i-b_i)b_i=\sum_{i=1}^{p}\deg(Z(\nu)_i)=\deg(Z(\nu)).
\end{aligned}
$$
This proves the lemma.
\end{proof}

\begin{lem}\label{refineresidue}
Suppose $\cha K=0$. Let $\nu\in I^\alpha$ and $z\in \RR^\Lam_{\nu,1}$. If $\nu$ is not piecewise dominant with respect to $\Lam$, then $z\in[\R[\alpha],\R[\alpha]]$.
\end{lem}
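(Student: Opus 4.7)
The plan is to take any spanning element of $\R[\nu,1]$, namely
$$z=\overrightarrow{\prod\limits_{0\leq i<p}}\Bigl(x_{c_i+1}^{n_{\bc,i+1}}\tau_{c_i+1}\cdots\tau_{c_{i+1}-1}\Bigr)e(\nu),$$
indexed by some $\mathbf{b}=(b_1,\ldots,b_p)\in\mathcal{C}^\Lambda(\nu)$ with $0\leq n_{\bc,i+1}<\lambda_{c_i,\nu^{i+1}}$, and to locate a single index $t$ at which Corollary \ref{zero element} forces $z\in[\R[\alpha],\R[\alpha]]$. Since the defining condition $\nu_j=\nu_{j+1}$ within each block of $\mathbf{b}$ says that $\mathbf{b}$ is a refinement of the unique decomposition (\ref{decomp4}), the natural move is to compare $\mathbf{b}$ with (\ref{decomp4}); write the latter with block sizes $b'_1,\ldots,b'_q$ and labels $\mu^1,\ldots,\mu^q$ (so $\mu^j\neq\mu^{j+1}$).

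The assumption that $\nu$ is not piecewise dominant delivers some $i_0$ with $\ell_{i_0}(\nu)<b'_{i_0}$. I focus on the \emph{last} sub-block of $\mathbf{b}$ refining the $i_0$-th max block; let $t$ be its index in $\mathbf{b}$, so $\nu^{t+1}=\mu^{i_0}$. A direct evaluation of $\sum_{l=1}^{c_t}\alpha_{\nu_l}$ (full max blocks $1,\ldots,i_0-1$, plus all but the final sub-block of max block $i_0$) gives
$$\lambda_{c_t,\nu^{t+1}}=\ell_{i_0}(\nu)-2(b'_{i_0}-b_{t+1}).$$
If this quantity were $\geq b_{t+1}$, rearranging and using $b_{t+1}\leq b'_{i_0}$ would give $\ell_{i_0}\geq 2b'_{i_0}-b_{t+1}\geq b'_{i_0}$, contradicting the choice of $i_0$. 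Hence $\lambda_{c_t,\nu^{t+1}}<b_{t+1}$, and the defining constraint $n_{\bc,t+1}<\lambda_{c_t,\nu^{t+1}}$ forces $n_{\bc,t+1}\leq b_{t+1}-2$.

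To finish, I would split $z=y_1\cdot x_{c_t+1}^{n_{\bc,t+1}}\tau_{c_t+1}\cdots\tau_{c_{t+1}-1}\cdot y_2\cdot e(\nu)$, gathering the factors $X_i$ with $i<t$ into $y_1\in A_{c_t}$ and those with $i>t$ into $y_2\in B_{c_{t+1}}$ (allowed because $e(\nu)$ commutes with each $X_i$, since every $\tau_l$ occurring inside $X_i$ permutes only within a constant run of $\nu$). The hypothesis $\cha K=0$ together with $n_{\bc,t+1}<b_{t+1}-1$ then lets Corollary \ref{zero element} apply directly and yields $z\in[\R[\alpha],\R[\alpha]]$. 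The one point that requires care is the choice of the \emph{last} sub-block of the bad max block: an interior sub-block would give only the weaker bound $\ell_{i_0}-2m\geq b_{t+1}$, which contradicts the failure of piecewise dominance only after a telescoping argument, whereas the last sub-block produces the contradiction in one line.
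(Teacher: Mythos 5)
Your proof is correct and follows essentially the same route as the paper's: both reduce to Corollary \ref{zero element} applied at the last sub-block of a maximal constant block, with the defining constraint $n_{\bc,t+1}<\lambda_{c_t,\nu^{t+1}}$ supplying the needed bound $k<b_{t+1}-1$. The only difference is presentational -- the paper argues the contrapositive (nonzero in cocenter implies the Lemma \ref{princlple criterion} condition for every maximal block), whereas you single out the failing block $i_0$ and carry out the computation $\lambda_{c_t,\nu^{t+1}}=\ell_{i_0}-2(b'_{i_0}-b_{t+1})$ inline rather than citing that lemma.
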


\begin{proof}
Applying Theorem \ref{generator}, we see that $\overline{z}$ is a linear combination of the image of some elements of the form $$
\bigl(x_1^{l_1}\tau_1\tau_2\cdots\tau_{c_{1}-1}\bigr)\bigl(x_{c_{1}+1}^{l_{c_{1}+1}}\tau_{c_{1}+1}\tau_{c_{1}+2}\cdots\tau_{c_{2}-1}\bigr)\cdots
 \bigl(x_{c_{p-1}+1}^{l_{c_{p-1}+1}}\tau_{c_{p-1}+1}\cdots\tau_{n-1}\bigr)e(\nu),
$$
where $\mathbf{b}=(b_1,\cdots,b_p)\in\mathcal{C}^\Lam(\nu)$ and $\{c_j|1\leq j\leq p\}$ is as defined in (\ref{cj1}). We decompose $\nu\in I^\alpha$ as in (\ref{nu1}). Then $$
0\leq l_j\leq\<h_{\nu_j},\Lam-\sum_{k=1}^{j-1}\alpha_{\nu_k}\>-1,\quad\forall\,j\in\{1,c_1+1,\cdots,c_{p-1}+1\}.
$$ Now Corollary \ref{zero element} tells us the above element is non-zero in the cocenter only if the following holds: $$
l_1\geq c_1-1,\,\,l_{c_{1}+1}\geq c_2-c_1-1,\,\,\cdots,\,\,l_{c_{p-1}+1}\geq n-1-c_{p-1}.
$$ Applying Lemma \ref{princlple criterion},  we see that if $\nu$ is not piecewise dominant, then $\overline{z}=0$ in $\Tr(\RR^\Lam_\alpha)$.
\end{proof}

The following theorem is the first main result in this paper,

\begin{thm}\label{principle generator} Suppose $\cha K=0$. Then we have

1) $
\bigl(\Tr(\RR^\Lam_\alpha)\bigr)_{d_{\Lam,\alpha}}=\text{$K$-{\rm Span}}\Bigl\{Z(\nu)+[\R[\alpha],\R[\alpha]] \Bigm|\begin{matrix}\text{$\nu$ is piecewise dominant}\\ \text{with respect to $\Lam$}\end{matrix}\Bigr\}$;

2) $\bigl(\Tr(\RR^\Lam_\alpha)\bigr)_{0}=\text{$K$-{\rm Span}}\Bigl\{e(\nu)+[\R[\alpha],\R[\alpha]]\Bigm|\begin{matrix}\text{$\nu$ is piecewise dominant}\\
\text{with respect to $\Lam$} \end{matrix}\bigr\}$;

3) $$
\Tr(\R[\alpha])=\text{$K$-{\rm Span}}\Biggl\{x_{1}^{t_1}x_{2}^{t_2}\cdots x_{n}^{t_n}e(\nu)+[\R[\alpha],\R[\alpha]]\Biggm|\begin{matrix}\text{$t_1,t_2,\cdots,t_n\in\N,\nu\in I^\alpha$ is}\\
\text{piecewise dominant with respect  to $\Lam$,}\\
\text{ $0\leq t_j\leq\ell_i(\nu)-1, \forall\,c_{i-1}+1\leq j\leq c_i$;}\\
\text{$\forall\,1\leq i\leq p$.}  \end{matrix}\Biggr\}.
$$
\end{thm}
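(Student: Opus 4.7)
The three parts are best proved together by first establishing Part 3 and then restricting to the appropriate graded component. I would begin by combining Theorem \ref{generator}, which gives $\Tr(\R[\alpha])=\sum_{\nu\in I^\alpha}\overline{\R[\nu,1]}$, with Lemma \ref{refineresidue}, which in characteristic zero kills the contribution from every $\nu$ that is not piecewise dominant. Thus it suffices to analyse $\overline{\R[\nu,1]}$ for each piecewise dominant $\nu$ with its unique decomposition (\ref{decomp4}).

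A typical generator of $\R[\nu,1]$ has the form $\prod_{i=0}^{p-1}\bigl(x_{c_i+1}^{n_{\bc,i+1}}\tau_{c_i+1}\tau_{c_i+2}\cdots\tau_{c_{i+1}-1}\bigr)e(\nu)$ for some $\mathbf{b}\in\mathcal{C}^\Lam(\nu)$. The plan is to invoke Lemma \ref{relations} iteratively, working from the rightmost $\tau$-block inward, to rewrite this element modulo $[\R[\alpha],\R[\alpha]]$ as a $K$-linear combination of pure $x$-monomials; division by the scalar $k+1$ is legitimate because $\cha K=0$. Each application strips off a rightmost $\tau_{c_{t+1}-1}$ and produces a sum of pure-$x$ terms plus terms with strictly fewer $\tau$'s and strictly smaller $x$-exponents, while Corollary \ref{zero element} annihilates any residual term whose leading $x$-exponent has dropped below $b_{t+1}-1$. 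After finitely many iterations, each starting generator becomes a sum of pure $x$-monomials $x_1^{t_1}\cdots x_n^{t_n}e(\nu)$ with $0\leq t_j\leq\ell_i(\nu)-1$ for $j$ in the $i$-th block of the unique decomposition; the upper bound is inherited from the original constraint $n_{\bc,i+1}\leq\lam_{c_i,\nu^{i+1}}-1\leq\ell_{i+1}-1$, since the reductions only lower $x$-exponents and stay inside a single $\tau$-block. This proves Part 3.

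For Part 2, since $\deg(x_je(\nu))=(\alpha_{\nu_j},\alpha_{\nu_j})>0$, the only degree-$0$ monomials in the spanning set from Part 3 are the $e(\nu)$'s, which forces $\nu$ to be piecewise dominant. For Part 1, I would route the degree-$d_{\Lam,\alpha}$ component through the surjection $\bigoplus_{\nu,\mathbf{b}}\Tr(\NH_{\mathbf{b}}^{l_1,\ldots,l_p})\twoheadrightarrow\Tr(\R[\alpha])$ from the proof of Proposition \ref{svv1a}, combined with the classical fact (\ref{degnilhecke2}) that $\Tr(\NH_n^\ell)_{2\ell n-2n^2}$ is one-dimensional and spanned by the class of $x_1^{\ell-1}x_2^{\ell-3}\cdots x_n^{\ell-2n+1}$. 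The image under $\pi_{\mathbf{b},\nu}$ of the tensor product of these top-degree nilHecke classes, one per block of the unique decomposition of $\nu$, is precisely the element $Z(\nu)$ from Definition \ref{znudfn}, whose total degree equals $d_{\Lam,\alpha}$ by Lemma \ref{degree1}.

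The main obstacle is the iteration in Part 3: Lemma \ref{relations}(2) in the case $b_{t+1}>2$ produces intermediate expressions that mix pure $x$-terms with terms having $\tau$-strings of various intermediate lengths (for instance $\tau_{c_t+2}\tau_{c_t+3}\cdots\tau_{c_{t+1}-1}$). Setting up an induction, presumably on the lexicographic pair (total number of $\tau$'s, maximal $x$-exponent), and verifying at each stage that the exponent bound $t_j\leq\ell_i(\nu)-1$ is preserved, will require careful bookkeeping of how each $\tau$-strip affects which $x$-exponents in subsequent rounds.
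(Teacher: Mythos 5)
Your overall strategy—reduce to piecewise dominant $\nu$ via Theorem \ref{generator} and Lemma \ref{refineresidue}, then iterate Lemma \ref{relations} (using $\cha K = 0$ to divide by $k+1$) to convert each generator of $\R[\nu,1]$ into a pure $x$-monomial, and read off Part 2 as the degree-$0$ case—is the same as the paper's for Parts 2 and 3, and the bound $n_{\bc,i+1}\leq\lambda_{c_i,\nu^{i+1}}-1\leq\ell_{i(\cdot)}(\nu)-1$ you extract is the right one (though, as you yourself observe, tracking exponents through the $b_{t+1}>2$ case of Lemma \ref{relations}(2) takes care, since the intermediate terms carry $\tau$-strings of assorted lengths and you must check the reductions never raise an exponent past $\ell_i(\nu)-1$; the paper leaves this implicit as well).

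The genuine gap is Part 1. Two concrete problems with the nilHecke route. First, the class $x_1^{\ell-1}x_2^{\ell-3}\cdots x_n^{\ell-2n+1}$ does \emph{not} span $\Tr(\NH_n^\ell)_{2\ell n-2n^2}$ when $n\leq\ell<2n$: the exponents $\ell-2j+1$ go negative, and the correct representative has a shorter monomial tail (this is exactly why Definition \ref{znudfn} has the case split between $\ell_i\geq 2b_i$ and $b_i<\ell_i\leq 2b_i-1$, truncating at index $\ell_i+2c_{i-1}-c_i$ in the second case; the paper's remark after the theorem, with $\NH_2^3$, exhibits an instance where the top class is not the naive monomial). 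Equation (\ref{degnilhecke2}) only records the one-dimensionality, not a formula. Second, even granting the corrected formula, the surjection $\pi=\sum_{\nu,\mathbf{b}}\pi_{\mathbf{b},\nu}$ does not by itself give Part 1: the sources $\Tr(\NH_{\mathbf{b}}^{l_1,\ldots,l_p})$ are graded with $\deg x_k = (\alpha_{\nu_k},\alpha_{\nu_k})$, and their top degrees $\sum_j d_{\nu^j}(2l_jb_j-2b_j^2)$ depend on the nilpotency indices $l_j$, which you have not identified with $\ell_j(\nu)$ (this requires an argument—e.g.\ via the graded dimension formula—and fails to be automatic for $j>1$). Without matching these top degrees to $d_{\Lam,\alpha}$ for \emph{every} contributing $(\nu,\mathbf{b})$, including refinements $\mathbf{b}\in\mathcal{C}^\Lam(\nu)$ finer than the unique decomposition (\ref{decomp4}), you cannot conclude that the preimage of $\Tr(\R[\alpha])_{d_{\Lam,\alpha}}$ consists only of top-degree nilHecke classes, and hence cannot pin down the span as exactly the $Z(\nu)$. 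The paper avoids all of this by working directly inside $\R[\nu,1]$: Lemma \ref{princlple criterion} identifies the maximal-degree generators $Z'(\nu)$ (which still carry some $\tau$'s when $\ell_i\leq 2b_i-1$), Lemma \ref{relations} rewrites $Z'(\nu)$ as a nonzero scalar multiple of $Z(\nu)$ in the cocenter, and Lemma \ref{degree1} confirms $\deg Z(\nu)=d_{\Lam,\alpha}$. You should replace your Part 1 argument with that direct computation rather than trying to repair the surjection route.
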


\begin{proof} By Theorem \ref{generator} and Lemma \ref{refineresidue}, each nonzero generator  is an image of some element of the form $$\begin{aligned}
& \bigl(x_1^{l_1}\tau_1\tau_2\cdots\tau_{c'_{1}-1}\bigr)\bigl(x_{c'_{1}+1}^{l_{c'_{1}+1}}\tau_{c'_{1}+1}\tau_{c'_{1}+2}\cdots\tau_{c'_{2}-1}\bigr)\cdots\\
&\qquad \bigl(x_{c'_{p'-1}+1}^{l_{c'_{p'-1}+1}}\tau_{c'_{p'-1}+1}\tau_{c'_{p'-1}+2}\cdots\tau_{n-1}\bigr)e(\nu),
\end{aligned}
$$ where $\nu$ is piecewise dominant. Furthermore, by Lemma \ref{princlple criterion}, for a piecewise dominant sequence $\nu$, the above element achieves the maximal degree if and only if it is of the form $$Z'(\nu)=Z'(\nu)_1Z'(\nu)_2\cdots Z'(\nu)_p, $$
where for each $1\leq i\leq p$, $$
Z'(\nu)_i:=\begin{cases}
x_{c_{i-1}+1}^{\ell_i-1}
x_{c_{i-1}+2}^{\ell_i-3}\cdots  x_{c_i}^{\ell_i-2b_i+1}e(\nu), &\text{if $\ell_i\geq 2b_i$;}\\
x_{c_{i-1}+1}^{\ell_i-1}
x_{c_{i-1}+2}^{\ell_i-3}\cdots  x_{k_i}^{\ell_i-2(k_i-c_{i-1})+1}\tau_{k_i}\cdots\tau_{c_i-2}\tau_{c_i-1}e(\nu), &\text{if $\ell_i\leq 2b_i-1$,}
\end{cases}
$$
where $k_i$ is as defined in (\ref{maxki}). Now Lemma \ref{relations} implies that $Z'(\nu)$ is some multiple of $Z(\nu)$ which exactly reach the maximal degree in cocenter by Lemma \ref{degree1}, while
the above element achieve the minimal degree if and only if it is of the form $e(\nu)$. Combining this with Lemma \ref{degree1} we complete the proof of Part 1) and Part 2) of the theorem. Part 3) follows from Theorem \ref{generator}, Lemma \ref{relations}, Corollary \ref{zero element} and Lemma \ref{refineresidue}.
\end{proof}

\begin{rem} We remark that if $\cha K>0$ then Part 3) of the above theorem may not hold. For example, by Lemma \ref{relations}, inside $\NH_2^3$ we have $$\begin{aligned}
x_1\tau_1&\equiv x_1\tau_1(x_2\tau_1-\tau_1x_1)\equiv x_1\tau_1x_2\tau_1\equiv (\tau_1x_1\tau_1)x_2\\
&\equiv -\tau_1x_2\equiv -x_1\tau_1-1\pmod{[\NH_2^3,\NH_2^3]},
\end{aligned}
$$
which implies that $2x_1\tau_1\equiv 1\pmod{[\NH_2^3,\NH_2^3]}$. Using \cite[Corollary 5.10]{HuL}, we know that $$
t_{3\Lam_0,2\alpha_0}(\tau_1x_1(x_1x_2))=1,
$$
which implies that $x_1\tau_1\notin[\NH_2^3,\NH_2^3]$, hence the degree $0$ component of the cocenter of $\NH_2^3$ is spanned by $x_1\tau_1+[\NH_2^3,\NH_2^3]$.
However, if $\cha K=2$ then $1\in [\NH_2^3,\NH_2^3]$. Hence Part 3) of the above theorem does not hold for $\NH_2^3$ in this case.
\end{rem}

Another direct application of Theorem \ref{generator} and Lemma \ref{refineresidue} is the following corollary, which recovers \cite[Theorem 3.31(a)]{SVV} in an elementary way.

\begin{cor}[\text{\rm \cite[Theorem 3.31(a)]{SVV}}]\label{maincor0} Suppose $\cha K=0$. Then we have $\bigl(\Tr(\R[\alpha])\bigr)_j\neq 0$ only if $j\in [0,d_{\Lam,\alpha}]$.
\end{cor}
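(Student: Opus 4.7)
The lower bound $j \geq 0$ is already supplied by Proposition~\ref{svv1a}, so the task reduces to establishing the upper bound $j \leq d_{\Lam,\alpha}$. The plan is to extract this from the generating-set analysis already used for Theorem~\ref{principle generator}, rather than to re-do the whole argument.

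Starting from Theorem~\ref{generator}, I would write $\Tr(\R[\alpha]) = \sum_{\nu \in I^\alpha}\overline{\RR^\Lam_{\nu,1}}$; Lemma~\ref{refineresidue} (which uses $\cha K = 0$) then allows us to discard every $\nu$ that is not piecewise dominant. For a piecewise dominant $\nu$ a typical spanning element of $\RR^\Lam_{\nu,1}$ has the shape $\overrightarrow{\prod_{0 \leq i<p}}\bigl(x_{c_i+1}^{n_{\bc,i+1}}\tau_{c_i+1}\cdots\tau_{c_{i+1}-1}\bigr)e(\nu)$ with $\mathbf{b}\in\mathcal{C}^\Lam(\nu)$ and $0\leq n_{\bc,i+1}<\lambda_{c_i,\nu^{i+1}}$. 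Since all residues inside the $(i+1)$-th block coincide with $\nu^{i+1}$, its degree contribution telescopes to $(\alpha_{\nu^{i+1}},\alpha_{\nu^{i+1}})(n_{\bc,i+1}-b_{i+1}+1)$.

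Next I would maximize the total degree over all $\mathbf{b}\in\mathcal{C}^\Lam(\nu)$ and admissible exponents. The analysis inside the proof of Theorem~\ref{principle generator} already shows (via Lemma~\ref{princlple criterion}) that the maximum is achieved by $Z'(\nu)$, while Lemma~\ref{relations} identifies it modulo $[\R[\alpha],\R[\alpha]]$ with a nonzero scalar multiple of $Z(\nu)$, and Lemma~\ref{degree1} records that $\deg Z(\nu)=d_{\Lam,\alpha}$. Consequently every generator, and hence every element of $\Tr(\R[\alpha])$, has degree at most $d_{\Lam,\alpha}$; together with Proposition~\ref{svv1a} this yields $(\Tr(\R[\alpha]))_j\neq 0$ only if $j\in[0,d_{\Lam,\alpha}]$.

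The main obstacle is the refinement bookkeeping: a priori, passing from the unique decomposition of $\nu$ to a finer $\mathbf{b}$ can raise the degree, since each removed $\tau$ contributes $+(\alpha_{\nu^i},\alpha_{\nu^i})$ to the bound, but simultaneously the $\lambda$-values drop by $2$ for every copy of $\alpha_{\nu^i}$ already consumed. The two effects balance exactly so that the finest feasible refinement sums to $\sum_i b_i(\ell_i-b_i)(\alpha_{\nu^i},\alpha_{\nu^i}) = d_{\Lam,\alpha}$, which is precisely Lemma~\ref{degree1}. Since this bookkeeping has already been carried out for Theorem~\ref{principle generator}, the corollary follows essentially for free by citing that theorem and Proposition~\ref{svv1a}.
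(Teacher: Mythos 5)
Your proof is correct and follows the same route as the paper: the lower bound $j\geq 0$ is Proposition~\ref{svv1a}, and the upper bound $j\leq d_{\Lam,\alpha}$ is read off from the degree maximization over the spanning elements carried out in the proof of Theorem~\ref{principle generator}, where Lemma~\ref{refineresidue} restricts to piecewise dominant $\nu$, Lemma~\ref{princlple criterion} locates the extremal exponents, and Lemma~\ref{degree1} identifies the maximal degree with $d_{\Lam,\alpha}$. One small wording caveat in your final paragraph: the coarse and refined decompositions do not really ``balance'' --- a finer feasible $\mathbf{b}$ genuinely has \emph{strictly} larger maximal degree --- but the constraints $\lambda_{c_i,\nu^{i+1}}>0$ (membership in $\mathcal{C}^\Lam(\nu)$) together with the nonvanishing requirement $n_{\bc,i+1}\geq b_{i+1}-1$ cap how far one can refine, so that the supremum over all feasible $\mathbf{b}$ is exactly $d_{\Lam,\alpha}$; this is a matter of phrasing, not a gap.
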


\begin{proof} Suppose $\bigl(\Tr(\R[\alpha])\bigr)_j\neq 0$. Then $j\geq 0$ by Proposition \ref{svv1a}. Now $j\leq d_{\Lam,\alpha}$ follows from the proof of
Theorem \ref{principle generator}.
\end{proof}

\begin{rem} It is tempting to speculate that $\{e(\nu)|\text{$\nu$ is piecewise dominant}\}$ is a basis of $\bigl(\Tr(\R[\alpha])\bigr)_0$. Unfortunately, this is not true. For example, in the type $A_2$ case we choose $\Lam=\Lam_1+\Lam_2$. We can write down all the piecewise dominant sequences with respect to $\Lam$ as follows (where $\alpha$s are given below):
$$\begin{aligned}
&\alpha=0:\emptyset;\quad  \alpha_1:(1);\quad  \alpha_2:(2);\quad \alpha_1+\alpha_2: (1,2),(2,1); \quad \alpha_1+2\alpha_2:(1,2,2); \\
&2\alpha_1+\alpha_2:(2,1,1);\quad 2\alpha_1+2\alpha_2:(2,1,1,2), (1,2,2,1).
\end{aligned}
$$ However, by the end of proof in \cite[Theorem 3.31(c)]{SVV}, we have $$
\dim\,\bigl(\Tr(\R[\alpha])\bigr)_0=\dim\,V(\Lam)=8<9. $$
\end{rem}

\subsection{A criterion for $\RR^\Lam_\alpha\neq 0$}
In this subsection, we will give a criterion for which $\R[\alpha]\neq 0$ via the existence of piecewise dominant sequences. Throughout this subsection, unless otherwise stated, $K$ is a field of arbitrary characteristic.

\begin{dfn} Let $\mathbf{b}:=(b_1,\cdots,b_p)$ be a composition of $n$.  We define $$ \Sym_{\mathbf{b}}=\Sym_{\{1,2,\cdots,b_1\}}\times\Sym_{\{b_1+1,\cdots,b_1+b_2\}}\times\cdots\times\Sym_{\{n-b_p+1,\cdots,n\}}.
$$
which is the standard Young subgroup of $\Sym_n$ corresponding to $\mathbf{b}:=(b_1,\cdots,b_p)$.
\end{dfn}

For each composition $\mathbf{b}=(b_1,\cdots,b_p)$ of $n$, we denote by $w_{\mathbf{b},0}$ the unique longest element in $\Sym_{\mathbf{b}}$. In other words, $$
w_{\mathbf{b},0}=w_{b_1,0}^{(1)}\times w_{b_2,0}^{(2)}\times\cdots\times w_{b_p,0}^{(p)},$$
where for each $1\leq i\leq p$, $w_{b_i,0}^{(0)}$ is the unique longest element in the Young subgroup $$
\Sym_{\{b_1+\cdots+b_{i-1}+1,b_1+\cdots+b_{i-1}+2,\cdots,b_1+\cdots+b_i\}}. $$
For each $1\leq i\leq p$, we fix a reduced expression of $w_{b_i,0}^{(0)}$ and use it to define $\tau_{\mathbf{b},i}:=\tau_{w_{b_i,0}^{(0)}}$.

\begin{dfn} Let $\nu$ be a piecewise dominant sequence with respect to $\Lam$, with a decomposition (\ref{decomp4}) satisfying (\ref{newrequire}). Let
$\{c_i|0\leq i\leq p\}$ and $\{\ell_i|1\leq i\leq p\}$ be defined as in (\ref{cj1}) and (\ref{elli}) respectively. We define $$
S(\nu):=\overrightarrow{\prod_{1\leq i\leq p}}\Bigl(\tau_{\mathbf{b},i}x_{c_{i-1}+1}^{\ell_i-1}
x_{c_{i-1}+2}^{\ell_i-2}\cdots x_{c_i}^{\ell_i-b_i}\Bigr)e(\nu)\in\R[\alpha] .$$
\end{dfn}

By Definition \ref{pddfn} of piecewise dominant sequence, each power index of $x_{c_{i-1}+j}$ in the product of the above big bracket is non-negative, hence the element $S(\nu)$ is well-defined.

Recall the map $\hat{\eps}_{k,\nu_k}$ introduced in Section 2 after Lemma \ref{KK1}.

\begin{lem}\label{induction}
Let $\nu=(\nu_1,\cdots,\nu_n)$ be a piecewise dominant sequence with respect to $\Lam$, with a decomposition (\ref{decomp4}) satisfying (\ref{newrequire}). Write $\nu'=(\nu_1,\cdots,\nu_{n-1})$. Then $\nu'$ is also piecewise dominant with respect to $\Lam$ and $$\hat{\eps}_{n,\nu^p}(S(\nu))=S(\nu').
$$
\begin{proof}
The first statement follows from the definition of piecewise dominant sequence. It remains to prove the second statement. Assume $b_p=1$. Then by Definition \ref{pddfn} we have $\ell_i\geq 1$ and Lemma \ref{KK1}, $$S(\nu)=s(\nu')x_n^{\ell_p-1}=p_{\ell_p-1}(S(\nu))x_n^{\ell_p-1}.
$$ Hence, $\hat{\eps}_{n,\nu^p}(S(\nu))=s(\nu')$.

Now assume $b_p>1$. We set $\mathbf{b}':=(b_1,\cdots,b_p-1)$.  Since $\hat{\eps}_{n,\nu^p}$ is $\R[n-1]$-linear, we have $$\begin{aligned}
\hat{\eps}_{n,\nu^p}(S(\nu))&=\overrightarrow{\prod_{1\leq i\leq p-1}}\Bigl(\tau_{\mathbf{b},i}x_{c_{i-1}+1}^{\ell_i-1}
x_{c_{i-1}+2}^{\ell_i-2}\cdots x_{c_i}^{\ell_i-b_i}\Bigr)e(\nu')\\
&\qquad\qquad \times \hat{\eps}_{n,\nu^p}\Bigl(\tau_{\mathbf{b},p}x_{c_{p-1}+1}^{\ell_p-1}
x_{c_{p-1}+2}^{\ell_p-2}\cdots x_n^{\ell_p-b_p}e(\nu)\Bigr).
\end{aligned}$$
It remains to show that $$\hat{\eps}_{n,\nu^p}\Bigl(\tau_{\mathbf{b},p}x_{c_{p-1}+1}^{\ell_p-1}
x_{c_{p-1}+2}^{\ell_p-2}\cdots x_n^{\ell_p-b_p}e(\nu)\Bigr)=\tau_{\mathbf{b}', p}x_{c_{p-1}+1}^{\ell_p-1}x_{c_{p-1}+2}^{\ell_p-2}\cdots x_{n-1}^{\ell_p-b_p+1}e(\nu').
$$

By a similar calculation as in the first paragraph of the proof of \cite[Lemma 5.6]{HuL}, we obtain $$\begin{aligned}
&\quad\,\tau_{\mathbf{b},p}x_{c_{p-1}+1}^{\ell_p-1}x_{c_{p-1}+2}^{\ell_p-2}\cdots x_{n}^{\ell_p-b_p}e(\nu)\\
&=\mu_{\tau_{n-1}}\Bigl(\tau_{c_{p-1}+1}\cdots \tau_{n-2}x_{n-1}^{\ell_p-b_p}e(\nu')\otimes \tau_{\mathbf{b}', p}x_{c_{p-1}+1}^{\ell_p-1}x_{c_{p-1}+2}^{\ell_p-2}\cdots x_{n-1}^{\ell_p-b_p+1}e(\nu')\Bigr)+\\
&\sum_{\substack{a_1+a_2=\ell_p-b_p-1\\a_1,a_2\geq 0}} \tau_{\mathbf{b}', p}x_{n-1}^{a_1}\tau_{n-2}\cdots \tau_{c_{p-1}+2}\tau_{c_{p-1}+1}x_{c_{p-1}+1}^{\ell_p-1}x_{c_{p-1}+2}^{\ell_p-2}\cdots x_{n-1}^{\ell_p-b_p+1}x_{n}^{a_2}e(\nu)\\
&=\mu_{\tau_{n-1}}\Bigl(\tau_{c_{p-1}+1}\cdots \tau_{n-2}x_{n-1}^{\ell_p-b_p}e(\nu')\otimes \tau_{\mathbf{b}', p}x_{c_{p-1}+1}^{\ell_p-1}x_{c_{p-1}+2}^{\ell_p-2}\cdots x_{n-1}^{\ell_p-b_p+1}e(\nu')\Bigr)+\\
&\sum_{\substack{a_1+a_2=\ell_p-b_p-1\\a_1\geq b_p-2,a_2\geq 0}} \tau_{\mathbf{b}', p}x_{n-1}^{a_1}\tau_{n-2}\cdots \tau_{c_{p-1}+2}\tau_{c_{p-1}+1}x_{c_{p-1}+1}^{\ell_p-1}x_{c_{p-1}+2}^{\ell_p-2}\cdots x_{n-1}^{\ell_p-b_p+1}x_{n}^{a_2}e(\nu),
\end{aligned}$$
where in the second equality we used the fact that  $\tau_{\mathbf{b}', p}\tau_{j}=0$ for any $c_{p-1}+1\leq j\leq n-2$, and $x_{n-1}^{a_1}\tau_{n-2}\cdots \tau_{c_{p-1}+2}\tau_{c_{p-1}+1}\in\sum_{j=c_{p-1}+1}^{n-2}\tau_j\R[\alpha]$ whenever $a_1<b_p-2$.
Now there are two possibilities:

\smallskip
{\it Case 1.} $\ell_p-2b_p > -2$, which corresponds to the case $\lam_{n-1,\nu_p}>0$ in the notation of Lemma \ref{KK1}. In that case, the map $\hat{\eps}_{n,\nu^p}$ picks out the coefficient of $x_n^{\ell_p-2b_p+1}$ (i.e., set $a_2=\ell_p-2b_p+1$). We get that $$\begin{aligned}
&\quad\,\hat{\eps}_{n,\nu^p}\Bigl(\tau_{\mathbf{b},p}x_{c_{p-1}+1}^{\ell_p-1}
x_{c_{p-1}+2}^{\ell_p-2}\cdots x_n^{\ell_p-b_p}e(\nu)\Bigr)\\
&=\tau_{\mathbf{b}', p}x_{n-1}^{b_p-2}\tau_{n-2}\cdots \tau_{c_{p-1}+2}\tau_{c_{p-1}+1}x_{c_{p-1}+1}^{\ell_p-1}x_{c_{p-1}+2}^{\ell_p-2}\cdots x_{n-1}^{\ell_p-b_p+1}e(\nu')\\&=\tau_{\mathbf{b}', p}x_{c_{p-1}+1}^{\ell_p-1}x_{c_{p-1}+2}^{\ell_p-2}\cdots x_{n-1}^{\ell_p-b_p+1}e(\nu'),
\end{aligned}$$
where the second equality follows again from the same argument used in the last sentence of the previous paragraph.

\smallskip
{\it Case 2.} $\ell_p-2b_p\leq -2$, which corresponds to the case $\lam_{n-1,\nu_p}\leq 0$ in the notation of Lemma \ref{KK1}.  Note that $$\ell_p-b_p-1-(b_p-2)=\ell_p-2b_p+1< 0.
$$
By the same argument used in the last sentence of the paragraph above Case 1, we can deduce that $$
\sum_{\substack{a_1+a_2=\ell_p-b_p-1\\a_1\geq b_p-2,a_2\geq 0}} \tau_{\mathbf{b}', p}x_{n-1}^{a_1}\tau_{n-2}\cdots \tau_{c_{p-1}+1}x_{c_{p-1}+1}^{\ell_p-1}x_{c_{p-1}+2}^{\ell_p-2}\cdots x_{n-1}^{\ell_p-b_p+1}x_{n}^{a_2}e(\nu)=0.
$$

For any $0\leq k\leq -\ell_p+2b_p-3=-(\ell_p-2b_p+2)-1$, it follows from the same argument as in the last paragraph of the proof of \cite[Lemma 5.7]{HuL},$$\begin{aligned}
&\quad\,\mu_{x_{n-1}^k} \Bigl(\tau_{c_{p-1}+1}\cdots \tau_{n-2}x_{n-1}^{\ell_p-b_p}e(\nu')\otimes \tau_{\mathbf{b}', p}x_{c_{p-1}+1}^{\ell_p-1}x_{c_{p-1}+2}^{\ell_p-2}\cdots x_{n-1}^{\ell_p-b_p+1}e(\nu')\Bigr)\\
&=\tau_{\mathbf{b}', p}x_{n-1}^{\ell_p-b_p+k}\tau_{n-2}\cdots \tau_{c_{p-1}+2}\tau_{c_{p-1}+1}x_{c_{p-1}+1}^{\ell_p-1}x_{c_{p-1}+2}^{\ell_p-2}\cdots x_{n-1}^{\ell_p-b_p+1}e(\nu')\\
&=0.
\end{aligned}$$
Hence by Lemma \ref{KK1}, $\widetilde{z}=\tau_{c_{p-1}+1}\cdots \tau_{n-2}x_{n-1}^{\ell_p-b_p}e(\nu')\otimes \tau_{\mathbf{b}', p}x_{c_{p-1}+1}^{\ell_p-1}x_{c_{p-1}+2}^{\ell_p-2}\cdots x_{n-1}^{\ell_p-b_p+1}e(\nu')$, where $$
z:=\mu_{\tau_{n-1}} \Bigl(\tau_{c_{p-1}+1}\cdots \tau_{n-2}x_{n-1}^{\ell_p-b_p}e(\nu')\otimes \tau_{\mathbf{b}', p}x_{c_{p-1}+1}^{\ell_p-1}x_{c_{p-1}+2}^{\ell_p-2}\cdots x_{n-1}^{\ell_p-b_p+1}e(\nu')\Bigr).
$$
By the definition of $\hat{\eps}_{n,\nu^p}$, we have that $$\begin{aligned}
&\quad\,\hat{\eps}_{n,\nu^p}(\tau_{\mathbf{b},p}x_{c_{p-1}+1}^{\ell_p-1}x_{c_{p-1}+2}^{\ell_p-2}\cdots x_{c_p}^{\ell_p-b_p}e(\nu))\\
&=\mu_{x_{n-1}^{-(\ell_p-2b_p+2)}}\Bigl(\tau_{c_{p-1}+1}\cdots \tau_{n-2}x_{n-1}^{\ell_p-b_p}e(\nu')\otimes \tau_{\mathbf{b}', p}x_{c_{p-1}+1}^{\ell_p-1}x_{c_{p-1}+2}^{\ell_p-2}\cdots x_{n-1}^{\ell_p-b_p+1}e(\nu')\Bigr)\\
&=\tau_{\mathbf{b}', p}x_{n-1}^{b_p-2}\tau_{n-2}\cdots \tau_{c_{p-1}+2}\tau_{c_{p-1}+1}x_{c_{p-1}+1}^{\ell_p-1}x_{c_{p-1}+2}^{\ell_p-2}\cdots x_{n-1}^{\ell_p-b_p+1}e(\nu')\\
&=\tau_{\mathbf{b}', p}x_{c_{p-1}+1}^{\ell_p-1}x_{c_{p-1}+2}^{\ell_p-2}\cdots x_{n-1}^{\ell_p-b_p+1}e(\nu').
\end{aligned}$$
This completes the proof of the lemma.
\end{proof}
\end{lem}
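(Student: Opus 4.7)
The piecewise-dominance of $\nu'$ is immediate from Definition \ref{pddfn}: if $b_p > 1$, then $\nu'$ inherits the block decomposition of $\nu$ with $b_p$ replaced by $b_p - 1$ and all $\ell_i$ unchanged, so each inequality $\ell_i \geq b_i$ survives (and the $i = p$ case is strengthened); if $b_p = 1$, then $\nu'$ has only the first $p - 1$ blocks, and the inequalities are inherited. For the identity $\hat{\eps}_{n,\nu^p}(S(\nu)) = S(\nu')$, I would first factor $S(\nu) = \prod_{i=1}^{p} S(\nu)_i$, where $S(\nu)_i := \tau_{\mathbf{b},i} x_{c_{i-1}+1}^{\ell_i - 1} \cdots x_{c_i}^{\ell_i - b_i} e(\nu)$. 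Since $\prod_{i < p} S(\nu)_i$ lies in $\R[\alpha - b_p \alpha_{\nu^p}] \subset \R[\alpha - \alpha_{\nu^p}]$, the $\R[\alpha - \alpha_{\nu^p}]$-linearity of $\hat{\eps}_{n,\nu^p}$ pulls this prefix out of the argument, reducing the problem to the single-block identity $\hat{\eps}_{n,\nu^p}(S(\nu)_p) = S(\nu')_p$.

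The single-block computation when $b_p = 1$ is immediate: $S(\nu)_p = x_n^{\ell_p - 1} e(\nu)$, the parameter $\lambda_{n-1,\nu^p} = \ell_p$ is strictly positive, and Lemma \ref{KK1}(1) identifies $\hat{\eps}_{n,\nu^p}$ with extraction of the coefficient of $x_n^{\ell_p - 1}$, giving $e(\nu') = S(\nu')_p$. The substantive case is $b_p > 1$. Here my plan is to rewrite $S(\nu)_p$ as $\mu_{\tau_{n-1}}(\widetilde{F}) + R$, where $\widetilde{F}$ is a tensor over $\R[\alpha - 2\alpha_{\nu^p}]$ and $R$ is a polynomial remainder in $x_n$ over $\R[\alpha - \alpha_{\nu^p}]$. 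The rewrite is obtained by using a reduced expression for $\tau_{\mathbf{b}, p}$ that isolates a rightmost $\tau_{n-1}$ and then commuting the $x$'s past it, which produces commutator corrections of the form $\sum x_{n-1}^{a_1} x_n^{a_2}$ attached to subwords of the $\tau$'s. The key organizational fact is that left multiplication by $\tau_{\mathbf{b}', p}$ kills $\tau_j$ for every $c_{p-1} + 1 \leq j \leq n - 2$: on this stretch all residues equal $\nu^p$, so the subalgebra generated by $\tau_j, x_j$ in this range is the nilHecke algebra, inside which the longest element times any simple reflection vanishes. This absorption eliminates most of the correction terms. Lemma \ref{KK1} then splits according to the sign of $\lambda_{n-1,\nu^p} = \ell_p - 2(b_p - 1)$: in the positive case, $\hat{\eps}_{n,\nu^p}(S(\nu)_p)$ is the coefficient of $x_n^{\ell_p - 2b_p + 1}$ in $R$; in the nonpositive case, one first verifies the uniqueness condition $\mu_{x_n^k}(\widetilde{F}) = 0$ for $0 \leq k \leq -\lambda_{n-1,\nu^p} - 1$, and then evaluates $\hat{\eps}_{n,\nu^p}(S(\nu)_p) = \mu_{x_{n-1}^{-\lambda_{n-1,\nu^p}}}(\widetilde{F})$. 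In both regimes, one more round of absorption by $\tau_{\mathbf{b}', p}$ collapses the result to $\tau_{\mathbf{b}', p} x_{c_{p-1}+1}^{\ell_p - 1} \cdots x_{n-1}^{\ell_p - b_p + 1} e(\nu') = S(\nu')_p$.

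The main obstacle is the bookkeeping in the rewrite $S(\nu)_p = \mu_{\tau_{n-1}}(\widetilde{F}) + R$: sliding the long product $x_{c_{p-1}+1}^{\ell_p-1} x_{c_{p-1}+2}^{\ell_p-2} \cdots x_n^{\ell_p-b_p}$ past the string of $\tau$'s generates a large number of terms, and one must show that every term not surviving into the desired answer contains a $\tau_j$ with $c_{p-1}+1 \leq j \leq n-2$, so that left multiplication by $\tau_{\mathbf{b}', p}$ annihilates it. I would lean on the analogous nilHecke rewrites in \cite[Lemma 5.6, Lemma 5.7]{HuL}, which carry out exactly this bookkeeping in the all-equal-residue setting; the adaptation here is that the equal-residue stretch consists of positions $c_{p-1}+1, \ldots, n$, and the outer blocks interact only through the already-factored prefix. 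Once that template is in place, the two $\lambda_{n-1,\nu^p}$-sign subcases reduce to routine coefficient extractions and uniqueness verifications.
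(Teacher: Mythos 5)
Your proposal follows essentially the same route as the paper: factor out the first $p-1$ blocks by $\RR^\Lambda$-linearity of $\hat\eps_{n,\nu^p}$, reduce to a single-block computation on the last nilHecke stretch via the rewrite $S(\nu)_p=\mu_{\tau_{n-1}}(\widetilde F)+R$ borrowed from \cite[Lemmas~5.6--5.7]{HuL}, exploit $\tau_{\mathbf b',p}\tau_j=0$ on the equal-residue range to absorb the correction terms, and split on the sign of $\lambda_{n-1,\nu^p}=\ell_p-2b_p+2$ exactly as in the paper's Cases 1 and 2. The only slip is a notational one: the uniqueness condition in the $\lambda_{n-1,\nu^p}\leq 0$ case should read $\mu_{x_{n-1}^k}(\widetilde F)=0$ (on $n-1$ strands), not $\mu_{x_n^k}$; otherwise the plan matches the paper's argument step for step.
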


Recall the  symmetrizing form $t_{\Lam,\alpha}: \R[\alpha]\rightarrow K$  introduced in Lemma \ref{tLam}.

\begin{cor}\label{Pd trace} Let $\nu\in I^\alpha$ be a piecewise dominant sequence with respect to $\Lam$. Then $$
t_{\Lam,\alpha}(S(\nu))\in K^\times. $$
In particular, $0\neq S(\nu)\notin [\R[\alpha],\R[\alpha]]$ and $0\neq e(\nu)\in\R[\alpha]$.
\end{cor}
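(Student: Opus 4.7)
The plan is a short induction on $n=|\alpha|$, driven entirely by Lemma \ref{induction}. First I would verify that $S(\nu)$ lies in the diagonal summand $e(\nu)\R[\alpha]e(\nu)$, so that the formula defining $t_{\Lam,\alpha}$ on that summand applies directly. This is essentially automatic: each $\tau_{\mathbf{b},i}$ is a product of Coxeter generators $\tau_j$ whose indices lie strictly inside the $i$-th constant block of $\nu$, so that $\tau_{\mathbf{b},i}e(\nu)=e(\nu)\tau_{\mathbf{b},i}$; together with the obvious commutation of $x$-powers with the idempotent this gives $S(\nu)=e(\nu)S(\nu)e(\nu)$. By the formula introduced just after Lemma \ref{KK1},
\[
t_{\Lam,\alpha}(S(\nu))=r_\nu\,\hat{\eps}_{n,\nu_n}\hat{\eps}_{n-1,\nu_{n-1}}\cdots\hat{\eps}_{1,\nu_1}\bigl(S(\nu)\bigr),
\]
with $r_\nu\in K^\times$ by construction.

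Next I would feed $S(\nu)$ into Lemma \ref{induction}: one application yields $\hat{\eps}_{n,\nu_n}(S(\nu))=S(\nu')$, where $\nu'=(\nu_1,\ldots,\nu_{n-1})$, and the lemma also guarantees that $\nu'$ is again piecewise dominant with respect to $\Lam$. Iterating this $n$ times strips off one entry from the right at each step, and the outcome of the full composition $\hat{\eps}_{1,\nu_1}\cdots\hat{\eps}_{n,\nu_n}$ is $S(\emptyset)$, which is the identity element of $\R[0]\cong K$. Therefore $t_{\Lam,\alpha}(S(\nu))=r_\nu\in K^\times$, which is the main assertion.

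The remaining statements follow at once. By Lemma \ref{tLam}, $t_{\Lam,\alpha}$ is a symmetrizing (in particular, trace) form, so it annihilates $[\R[\alpha],\R[\alpha]]$; thus $t_{\Lam,\alpha}(S(\nu))\neq 0$ forces $S(\nu)\notin[\R[\alpha],\R[\alpha]]$ and a fortiori $S(\nu)\neq 0$. Since $S(\nu)=e(\nu)S(\nu)e(\nu)$, the non-vanishing of $S(\nu)$ then forces $e(\nu)\neq 0$ in $\R[\alpha]$.

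The real work is already concentrated in Lemma \ref{induction}, which handles the subtle split into the two branches $\ell_p-2b_p>-2$ and $\ell_p-2b_p\leq -2$ of Lemma \ref{KK1}, together with the commutation gymnastics needed to extract the leading coefficient in $x_n$ (or to recognise the element as a $\mu_{\tau_{n-1}}$-image with the required vanishing $\mu_{x_{n-1}^k}$-conditions). Once that lemma is in hand, the corollary is a purely formal unwinding of the definition of $t_{\Lam,\alpha}$, with the non-vanishing coming for free from the explicit $K^\times$-scalar $r_\nu$.
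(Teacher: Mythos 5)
Your proof is correct and is exactly the argument the paper intends (the paper simply says ``this follows from the definition of $t_{\Lam,\alpha}$ and Lemma \ref{induction}''); you have spelled out the intermediate steps — the diagonal idempotent property $S(\nu)=e(\nu)S(\nu)e(\nu)$, the $n$-fold application of Lemma~\ref{induction} stripping one entry at a time down to $S(\emptyset)=1\in\R[0]\cong K$, and the trace property of $t_{\Lam,\alpha}$ — precisely as the paper presupposes.
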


\begin{proof}
This follows from the definition of $t_{\Lam,\alpha}$ and Lemma \ref{induction}.
\end{proof}

\begin{rem} If $\nu\in I^\alpha$ satisfies the stronger assumption that $\nu^i\neq \nu^j$ for any $1\leq i\neq j\leq p$, then $\nu$ coincides with $\widetilde{\nu}$ in the notation of \cite[(5.1)]{HS}. In this special case, the number $\ell_i$ is the same as $N_i^\Lam(\widetilde{\nu})$ in the notation of
\cite[Definition 5.2]{HS}, and the second part of \cite[Theorem 5.4]{HS} can be reformulated as: $$
\text{$e(\nu)\neq 0$ in $\R[\alpha]$\,\,\, if and only if\,\,\, $\nu$ is piecewise dominant with respect to $\Lam$.} $$

Our Corollary \ref{Pd trace} says that for those $\nu$ not satisfying the above stronger assumption, the ``if part'' of the above statement still holds. But the ``only if part'' of the above statement may be false. For example, let's consider the type $A_2$ case again and choose $\Lam=\Lam_1+\Lam_2$, then \cite[Theorem 5.34]{HS} implies $e(2,1,2)\neq 0$. But it's easy to check by definition that $(2,1,2)$ is not piecewise dominant with respect to $\Lam$.
\end{rem}

The following two theorems are the second main results of this paper.

\begin{thm}\label{criterion} Let $K$ be a field of arbitrary characteristic, $\Lam\in P^+$ and $\alpha\in Q_n^+$. The following statements are equivalent:\begin{enumerate}
\item[1)] $\R[\alpha](K)\neq 0$;
\item[2)] There is a piecewise dominant sequence $\nu\in I^\alpha$ with respect to $\Lam$.
\end{enumerate}
\end{thm}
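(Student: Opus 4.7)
The plan is to split the biconditional into its two directions and reduce the nontrivial one to the characteristic-zero setting already developed in the previous section.

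The direction $(2)\Rightarrow(1)$ is immediate and characteristic-free: for any piecewise dominant $\nu\in I^\alpha$, Corollary \ref{Pd trace} already produces $0\neq e(\nu)\in\R[\alpha](K)$, so $\R[\alpha](K)\neq 0$. This is the whole content of the easy implication.

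For $(1)\Rightarrow(2)$, my first observation is that both sides of the equivalence are independent of the ground field $K$ and of the choice of polynomials $Q_{i,j}$. The notion of a piecewise dominant sequence (Definition \ref{pddfn}) involves only the Cartan datum together with $\Lam$ and $\alpha$; meanwhile, Lemma \ref{HS1} shows that $\dim_K\R[\alpha](K)$ depends only on the root system and $\Lam$, not on $K$ or the $Q_{i,j}$. It therefore suffices to treat the case $\cha K=0$, where the full strength of Theorem \ref{principle generator} is available. Assuming now $\cha K=0$ and $\R[\alpha]\neq 0$, I would appeal to the non-degenerate homogeneous symmetrizing form $t_{\Lam,\alpha}$ of Lemma \ref{tLam}, which has degree $-d_{\Lam,\alpha}$. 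Since $1_{\R[\alpha]}=\sum_{\nu\in I^\alpha}e(\nu)$ is a nonzero element of degree $0$, non-degeneracy produces a homogeneous element $h\in\R[\alpha]$ of degree $d_{\Lam,\alpha}$ with $t_{\Lam,\alpha}(h)\neq 0$. Because $t_{\Lam,\alpha}$ vanishes on commutators, $\overline{h}$ is a nonzero class in $\bigl(\Tr(\R[\alpha])\bigr)_{d_{\Lam,\alpha}}$. Invoking part (1) of Theorem \ref{principle generator}, this top-degree piece is spanned by the classes $\overline{Z(\nu)}$ with $\nu\in I^\alpha$ piecewise dominant with respect to $\Lam$, so at least one such $\nu$ must exist in order that the spanning set be non-empty.

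The only point requiring genuine care is the characteristic-reduction step: I must be precise that the dimension formula of Lemma \ref{HS1} truly transports non-vanishing of $\R[\alpha]$ between different ground fields. Once this is granted, both implications become formal consequences of results already established, with the substantive technical work having been done earlier in constructing $S(\nu)$, identifying the top-degree component of the cocenter, and verifying $t_{\Lam,\alpha}(S(\nu))\neq 0$ in Corollary \ref{Pd trace}.
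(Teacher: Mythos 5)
Your argument is correct and follows essentially the same route as the paper's proof: the reverse implication via Corollary~\ref{Pd trace}, and the forward implication by first passing to characteristic~$0$ via Lemma~\ref{HS1}, then using the degree~$-d_{\Lam,\alpha}$ symmetrizing form to exhibit a nonzero class in $\bigl(\Tr(\R[\alpha])\bigr)_{d_{\Lam,\alpha}}$, and finally reading off the existence of a piecewise dominant sequence from Theorem~\ref{principle generator}(1). The paper phrases the middle step as the dimension equality $\dim\Tr(\R[\alpha](\Q))_{d_{\Lam,\alpha}}=\dim Z(\R[\alpha](\Q))_0\neq 0$, which is the same duality you invoke, just stated on the center side.
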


\begin{proof} Let $\R[\alpha](\Q)$ be the cyclotomic quiver Hecke algebra defined by the same Cartan datum as $\R[\alpha](K)$ but using certain polynomials $\{Q'_{ij}(u,v)|i,j\in I\}$ defined over $\Q$. By Lemma \ref{HS1}, we see that $\R[\alpha](K)\neq 0$ if and only if $\R[\alpha](\Q)\neq 0$.

Suppose now $\R[\alpha](K)\neq 0$. Then $\R[\alpha](\Q)\neq 0$. In particular, $$
\dim\Tr(\R[\alpha](\Q))_{d_{\Lam, \alpha}}=\dim Z(\R[\alpha](\Q))_0\neq 0. $$
Applying  Theorem \ref{principle generator}, we can deduce that there is a piecewise dominant sequence $\nu\in I^\alpha$ with respect to $\Lam$.

Conversely, suppose there is a piecewise dominant sequence $\nu\in I^\alpha$ with respect to $\Lam$. By Corollary \ref{Pd trace}, we see that $S(\nu)$ is a non-zero element in $\R[\alpha](K)$ which implies $\R[\alpha](K)\neq 0$.
\end{proof}

As an application, we obtain the following criterion for which $\Lam-\alpha$ is a weight of the irreducible highest weight $\mathfrak{g}$-module $L(\Lam)$.

\begin{thm}\label{maincor1} Suppose $L(\Lam)$ is the irreducible highest weight $\mathfrak{g}$-module with highest weight $\Lam$. Then $\Lam-\alpha$ is a weight of $L(\Lam)$ if and only if there is a piecewise dominant sequence $\nu\in I^\alpha$ with respect to $\Lam$. In that case, if  $\nu\in I^\alpha$ is a piecewise dominant sequence with respect to $\Lam$, then $f_{\nu_n}f_{\nu_{n-1}}\cdots f_{\nu_1}v_\Lam\neq 0$ is a nonzero weight vector in $L(\Lam)_{\Lam-\alpha}$, where $f_i$ denotes the Chevalley generator of the enveloping algebra $U(\mathfrak{g})$ for each $i\in I$.
\end{thm}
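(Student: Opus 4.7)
The plan is to invoke the Kang-Kashiwara cyclotomic categorification \cite{KK} to translate the non-vanishing of $L(\Lam)_{\Lam-\alpha}$ into the non-vanishing of $\R[\alpha]$, and then apply the previously established Theorem \ref{criterion}. By \cite{KK}, there is an isomorphism of $U(\fg)$-modules (obtained by specializing the grading variable to $1$)
\[
L(\Lam) \;\cong\; \bigoplus_{\alpha\in Q^+} [\R[\alpha]\text{-pmod}]\otimes_\Z\Q,
\]
where $\R[\alpha]\text{-pmod}$ denotes the category of finitely generated graded projective $\R[\alpha]$-modules; under this isomorphism the weight space $L(\Lam)_{\Lam-\alpha}$ corresponds to the $\alpha$-th summand, $v_\Lam$ corresponds to the class of the trivial $\R[0]=K$ module, and the Chevalley generator $f_i$ is categorified by the induction functor
\[
F_i:\R[\beta]\text{-mod}\longrightarrow\R[\beta+\alpha_i]\text{-mod},\qquad M\longmapsto \R[\beta+\alpha_i]e(\beta,i)\otimes_{\R[\beta]}M.
\]
In particular, $L(\Lam)_{\Lam-\alpha}\neq 0$ if and only if $\R[\alpha]\neq 0$, and combining this with Theorem \ref{criterion} yields the equivalence asserted in the first part of the theorem.

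For the explicit non-vanishing of the monomial vector, I would argue as follows. A routine induction on $n$, using the identification
\[
\R[\beta+\alpha_i]e(\beta,i)\otimes_{\R[\beta]}\R[\beta]e(\mu)\;\cong\;\R[\beta+\alpha_i]e(\mu,i),
\]
shows that
\[
F_{\nu_n}\circ F_{\nu_{n-1}}\circ\cdots\circ F_{\nu_1}(K)\;\cong\;\R[\alpha]e(\nu),
\]
as graded $\R[\alpha]$-modules (up to an overall grading shift that is immaterial after specialization). Consequently, under the categorification the vector $f_{\nu_n}\cdots f_{\nu_1}v_\Lam$ corresponds to the class $[\R[\alpha]e(\nu)]$ in the Grothendieck group.

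When $\nu$ is piecewise dominant with respect to $\Lam$, Corollary \ref{Pd trace} guarantees that $e(\nu)\neq 0$ in $\R[\alpha]$. Hence $\R[\alpha]e(\nu)$ is a non-zero finitely generated graded projective $\R[\alpha]$-module, so that its class in the Grothendieck group is non-zero (the indecomposable projectives, up to grading shift, form a $\Z[q,q^{-1}]$-basis). Via the categorification this yields the non-zero weight vector $f_{\nu_n}\cdots f_{\nu_1}v_\Lam\in L(\Lam)_{\Lam-\alpha}$, which simultaneously establishes the ``if'' direction of the first part.

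The proof has essentially no new technical obstacle beyond what has already been proved: the heart of the argument lies in Theorem \ref{criterion} (for the equivalence) and Corollary \ref{Pd trace} (for the non-vanishing of $e(\nu)$). The only care required is to correctly set up the cyclotomic categorification dictionary and to track that the iterated induction $F_{\nu_n}\circ\cdots\circ F_{\nu_1}$ applied to the trivial $\R[0]$-module is exactly $\R[\alpha]e(\nu)$.
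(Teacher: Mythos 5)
Your proposal is correct and follows essentially the same route as the paper: both invoke the Kang--Kashiwara categorification to identify $f_{\nu_n}\cdots f_{\nu_1}v_\Lam$ with $[\R[\alpha]e(\nu)]$ in $K(\Proj\R[\alpha])$, and both deduce the non-vanishing from the fact that $e(\nu)\neq0$ when $\nu$ is piecewise dominant (which is Corollary \ref{Pd trace}; the paper attributes it to Theorem \ref{criterion}, whose proof of the relevant direction rests on Corollary \ref{Pd trace}). The only cosmetic difference is that the paper works with the shifted functors ${\rm F}_i=q_i^{1-\langle h_i,\Lam-\beta\rangle}[F_i^\Lam]$ rather than handling the grading shift by specialization, but this has no bearing on the argument.
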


\begin{proof} The first statement follows from the equality $\dim L(\Lam)_{\Lam-\alpha}=\#\Irr(\R[\alpha])$ (\cite[Theorem 6.2]{KK}) and Theorem \ref{criterion}.

Let $\beta\in Q^+$. For each $i\in I$, let $$\begin{aligned}  F_i^\Lam:\, \Mod(\R[\beta])&\rightarrow \Mod(\R[\beta+\alpha_i]),\\
M&\mapsto \R[\beta+\alpha_i]e(\beta,i)\otimes_{\R[\beta]}M ,
\end{aligned}
$$
be the induction functor introduced in \cite{KK}. Let $[F_i^\Lam]: K(\Proj\R[\beta])\rightarrow K(\Proj\R[\beta+\alpha_i])$ be the induced map on the Grothendieck group of finite dimensional projective modules. Set ${\rm F}_i:=q_i^{1-\<h_i,\Lam-\beta\>}[F_i^\Lam]$. Then by \cite[Theorem 6.2]{KK}, we have
$$
[\R[\alpha]e(\nu)]={\rm F}_{\nu_n}{\rm F}_{\nu_{n-1}}\cdots {\rm F}_{\nu_1} [\textbf{1}_{\Lam}]=f_{\nu_n}f_{\nu_{n-1}}\cdots f_{\nu_1}v_\Lam.
$$ Again, by Theorem \ref{criterion}, if $\nu\in I^\alpha$ is a piecewise dominant sequence with respect to $\Lam$, then $[\R[\alpha]e(\nu)]\neq 0$. Hence, $f_{\nu_n}f_{\nu_{n-1}}\cdots f_{\nu_1}v_\Lam\neq 0$, and it is a nonzero weight vector in $L(\Lam)_{\Lam-\alpha}$.
\end{proof}


Suppose that $\R[\alpha]\neq 0$. By Theorem \ref{criterion}, we have a piecewise dominant sequence $\nu$ with respect to $\Lam$. By Corollary \ref{Pd trace}, we know that $S(\nu)+[\RR^\Lam_\alpha,\RR^\Lam_\alpha]$ is a non-zero element in $\bigl(\Tr(\RR^\Lam_\alpha)\bigr)_{d_{\Lam,\alpha}}$.

Note that as $\R[\alpha]$ is a symmetric algebra (\cite{SVV}), we have $Z(\R[\alpha])\cong\bigl(\Tr(\RR^\Lam_\alpha)\bigr)^*\<d_{\Lam,\alpha}\>$. In particular, $\dim\bigl(Z(\R[\alpha])\bigr)_0\cong\dim\bigl(\Tr(\RR^\Lam_\alpha)\bigr)_{d_{\Lam,\alpha}}$. Thus $\dim\bigl(\Tr(\RR^\Lam_\alpha)\bigr)_{d_{\Lam,\alpha}}=1$ if and only if $\dim\bigl(Z(\R[\alpha])\bigr)_0=1$, and if and only if $\R[\beta]$ is indecomposable. The following conjecture is a generalization and refinement of Conjecture \ref{conj2} and \cite[Conjecture 3.33]{SVV}.

\begin{conj}\label{conj1} We have that
$$\bigl(\Tr(\RR^\Lam_\alpha)\bigr)_{d_{\Lam,\alpha}}=K\bigl(S(\nu)+[\RR^\Lam_\alpha,\RR^\Lam_\alpha]\bigr).$$
In particular, $\dim\bigl(\Tr(\RR^\Lam_\alpha)\bigr)_{d_{\Lam,\alpha}}=1$.
\end{conj}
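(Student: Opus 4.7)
\smallskip
\noindent\textbf{Proof proposal.}
The first move is a reduction to characteristic zero with a convenient choice of $\{Q_{ij}\}$. By Lemma \ref{HS1} the graded dimensions of $\R[\alpha]$ are independent of $K$ and of $\{Q_{ij}\}$, and since the Frobenius form $t_{\Lam,\alpha}$ of degree $-d_{\Lam,\alpha}$ (Lemma \ref{tLam}) provides a graded-linear duality between $\Tr(\R[\alpha])$ and $Z(\R[\alpha])$, we have
\[
\dim\bigl(\Tr(\R[\alpha])\bigr)_{d_{\Lam,\alpha}} \;=\; \dim Z(\R[\alpha])_0,
\]
and this common integer is independent of $(K,\{Q_{ij}\})$. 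Hence the conjecture in full generality follows from its validity over any single convenient datum (say $\cha K=0$ and $\{Q_{ij}\}$ as in \cite[\S3.2.4]{Rou2}). Moreover, by Corollary \ref{Pd trace} we already know $\overline{S(\nu)}\neq 0$, so the content of the conjecture is that the space has dimension \emph{at most} one.

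Under the assumption $\cha K=0$, Part (1) of Theorem \ref{principle generator} reduces the problem to showing that for any two piecewise dominant $\nu,\nu'\in I^\alpha$ the classes $\overline{Z(\nu)}$ and $\overline{Z(\nu')}$ are proportional in $\Tr(\R[\alpha])$, because $\overline{Z(\nu)}$ and $\overline{S(\nu)}$ differ by a nonzero scalar (tracking the proof of Theorem \ref{principle generator} and using Lemma \ref{relations}). I would attack this by induction on $n=|\alpha|$, peeling off the last block in the decomposition (\ref{decomp4}). Concretely, if $\nu$ and $\nu'$ both end with the same letter $i\in I$, then one applies Kang--Kashiwara's decomposition (Lemma \ref{KK1}) together with Lemma \ref{induction}, which asserts $\hat{\eps}_{n,i}(S(\nu))=S(\nu_1,\ldots,\nu_{n-1})$, to push the identity down to $\R[\alpha-\alpha_i]$; if they end with different letters, Lemma \ref{pathlem2} guarantees that the two crystal paths associated to $\nu,\nu'$ in $\mathcal{B}(\Lam)$ can be connected, and one translates each elementary crystal move into an explicit identity modulo $[\R[\alpha],\R[\alpha]]$ using the commutator relations of Lemma \ref{relations} and the vanishing criteria of Corollary \ref{zero element}.

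The hard part is the final translation step: elementary modifications of a crystal path do \emph{not} come with an off-the-shelf dictionary into the cocenter, and the two classes of strategies that succeed in the proved cases (Theorem \ref{4mainthm}) each exploit special structure unavailable in general. In finite symmetric type over $\cha K=0$, one uses Shan--Varagnolo--Vasserot's cyclotomic $\mathfrak{sl}_2$-categorification to identify blocks; in types $A_\infty$ and $A_{e-1}^{(1)}$ one uses the Brundan--Kleshchev isomorphism to cyclotomic Hecke algebras and invokes Lyle--Mathas/Brundan's block classification. For a general symmetrizable Cartan matrix, neither input is available, and the crux will be producing a direct combinatorial certificate that any two reduced products of the shape $S(\nu)$ are proportional. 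A reasonable intermediate goal, which already implies Conjecture \ref{conj2}, is to show by an explicit algorithm (analogous to the inductive dance in the proof of Theorem \ref{generator}) that $\overline{S(\nu)}$ depends only on the $\mathfrak{S}_n$-orbit of $\nu$, i.e. only on $\alpha$; this would force dimension one, since the trace value $t_{\Lam,\alpha}(S(\nu))$ is a fixed nonzero scalar by Corollary \ref{Pd trace}.
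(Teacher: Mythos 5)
This statement is labelled a \emph{Conjecture} in the paper, not a theorem: the paper does not prove it in general, and only establishes it in the special cases collected in Theorem~\ref{4mainthm}. You are right to present a proposal rather than a proof, and your second and third paragraphs correctly identify where the genuine open difficulty lies (finding a direct cocenter argument that the classes $\overline{S(\nu)}$ are proportional, without importing external block classifications). That part of your discussion is accurate and matches the paper's own assessment.

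There is, however, a concrete gap in your first step. You argue that since the graded dimension of $\R[\alpha]$ is independent of $K$ and of $\{Q_{ij}\}$ (Lemma~\ref{HS1}), and $\dim\bigl(\Tr(\R[\alpha])\bigr)_{d_{\Lam,\alpha}}=\dim Z(\R[\alpha])_0$ by the symmetrizing form, this common integer is automatically independent of $(K,\{Q_{ij}\})$, so it suffices to treat one convenient datum. This inference is unjustified: Lemma~\ref{HS1} controls $\dim_q\R[\alpha]$, not $\dim Z(\R[\alpha])$ or $\dim\Tr(\R[\alpha])$. The commutator subspace $[\R[\alpha],\R[\alpha]]$ is a $\Z$-sublattice of the free $\Z$-form $\R[\alpha](\Z)$, and its rank can drop upon reduction modulo $p$; equivalently $\R[\alpha](\Z)/[\R[\alpha](\Z),\R[\alpha](\Z)]$ may have $p$-torsion, in which case $\dim_{\mathbb F_p}\Tr(\R[\alpha](\mathbb F_p))$ strictly exceeds $\dim_\Q\Tr(\R[\alpha](\Q))$. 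So the center/cocenter dimension does not a priori descend to positive characteristic by a dimension count. The paper handles exactly this point, nontrivially, in Lemma~\ref{plem}: passing through the complete local ring $\hat\Z_{(p)}$, lifting central idempotents (Curtis--Reiner), and showing that a nontrivial decomposition of $e(\alpha)$ over $\mathbb F_p$ would yield one over $\hat\Q_{(p)}$ and hence over $\Q$. Your reduction to characteristic zero should appeal to this lifting argument rather than to Lemma~\ref{HS1}. With that replaced, your outline is consistent with the paper's route to Theorem~\ref{4mainthm}, which in characteristic~$0$ uses the Shan--Varagnolo--Vasserot result for finite symmetric type and the Brundan--Kleshchev/Lyle--Mathas/Brundan block results for types $A_\infty$ and $A^{(1)}_{e-1}$, and then extends to positive characteristic via Lemma~\ref{plem}.
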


\begin{rem}\label{3rem} 1) By \cite[Remark 3.41]{SVV}, we know that when $\cha K=0$, $\mathfrak{g}$ is symmetric and of finite type, and $\{Q_{ij}(u,v)|i,j\in I\}$ are given as \cite[(11)]{SVV}, the above conjecture holds.

2) Let $\mathfrak{g}$ be of type $A_{\infty}$ or affine type $A_{(e-1)}^{(1)}$ with $e>1$ and $(e,p)=1$, where $p:=\cha K$, and $\{Q_{ij}(u,v)|i,j\in I\}$ are given as \cite[\S3.2.4]{Rou2}. Then after a finite extension of the ground field $K$, each cyclotomic quiver Hecke algebra $\R[\alpha]$ is isomorphic to the block algebra of the cyclotomic Hecke algebra of type $G(\ell,1,n)$ (\cite{BK:GradedKL}) which corresponds to $\alpha$. In this case, the above conjecture holds because $e(\alpha):=\sum_{i\in I^\alpha}e(\bi)$ is a block idempotent of the corresponding cyclotomic Hecke algebra by \cite{LM} and \cite{Br}.

3) By \cite{HuLin} or \cite[Theorem 1.9]{HS2}, we also know that the above conjecture holds whenever $\alpha=\sum_{j=1}^n\alpha_{i_j}$ with $\alpha_{i_1},\cdots, \alpha_{i_n}$ pairwise distinct.
\end{rem}

For any prime number $p>0$, we use $\Z_{(p)}$ to denote the localization of $\Z$ at its maximal ideal $(p)$, and $\hat{\Z}_{(p)}$ to denote the completion of $\Z_{(p)}$ at its unique maximal ideal $p\Z_{(p)}$. Let $\hat{\Q}_{(p)}$ be the fraction field of $\hat{\Z}_{(p)}$.

\begin{lem}\label{plem} Let $K$ be a field with characteristic $\cha K=p>0$, $\Lam\in P^+$ and $\alpha\in Q_n^+$. Suppose that each $Q_{ij}(u,v)$ is defined over $\Z$. If Conjecture \ref{conj1} holds for the cyclotomic quiver Hecke algebra $\R[\alpha](\Q)$, then Conjecture \ref{conj1} holds for the cyclotomic quiver Hecke algebra $\R[\alpha](K)$ too.
\end{lem}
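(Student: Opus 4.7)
The plan is to reduce Conjecture~\ref{conj1} to the indecomposability of the algebra, and then to propagate indecomposability from $\Q$ to $K$ through an integral model over the complete DVR $\hat{\Z}_{(p)}$ and a standard block-lifting theorem.

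First, I would observe that for cyclotomic quiver Hecke algebras, Conjecture~\ref{conj1} is equivalent to indecomposability of $\R[\alpha](K)$. The degree-zero part $\R[\alpha](K)_{0}=\bigoplus_{\nu\in I^{\alpha}}K\,e(\nu)$ is a commutative semisimple $K$-algebra isomorphic to $K^{|I^{\alpha}|}$, so the subalgebra $Z(\R[\alpha](K))_{0}$ is reduced and itself isomorphic to $K^{m}$, where $m$ is the number of blocks of $\R[\alpha](K)$. Combining this with the symmetric-algebra duality $\dim_{K}Z(\R[\alpha](K))_{0}=\dim_{K}\Tr(\R[\alpha](K))_{d_{\Lam,\alpha}}$ coming from Lemma~\ref{tLam} and the fact that $S(\nu)+[\R[\alpha](K),\R[\alpha](K)]$ is always nonzero (Corollary~\ref{Pd trace}), Conjecture~\ref{conj1} over $K$ amounts exactly to the statement that $\R[\alpha](K)$ is indecomposable. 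The same equivalence holds over $\Q$, so the hypothesis gives us indecomposability of $\R[\alpha](\Q)$.

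Next, since the center of a finite-dimensional algebra commutes with any field extension (flat base change of the kernel of the adjoint-action map), the equality $\dim Z_{0}=1$ passes from $\R[\alpha](\Q)$ to $\R[\alpha](\hat{\Q}_{(p)})$, which is therefore also indecomposable. By Corollary~\ref{free}, $\R[\alpha](\hat{\Z}_{(p)})$ is $\hat{\Z}_{(p)}$-free of finite rank with generic fibre $\R[\alpha](\hat{\Q}_{(p)})$; torsion-freeness makes the embedding $\R[\alpha](\hat{\Z}_{(p)})\hookrightarrow \R[\alpha](\hat{\Q}_{(p)})$ injective, so any nontrivial central idempotent of $\R[\alpha](\hat{\Z}_{(p)})$ would survive as a nontrivial central idempotent of $\R[\alpha](\hat{\Q}_{(p)})$, forcing $\R[\alpha](\hat{\Z}_{(p)})$ to be indecomposable. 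Now I would invoke the standard block-lifting theorem: for a finite free algebra over a complete local Noetherian ring $R$ with residue field $k$, reduction modulo the maximal ideal induces a bijection on primitive central idempotents. Applied with $R=\hat{\Z}_{(p)}$ and $k=\mathbb{F}_{p}$, this gives indecomposability of $\R[\alpha](\mathbb{F}_{p})$. A final flat base change $\mathbb{F}_{p}\to K$ preserves the degree-zero center dimension, yielding $\dim_{K}Z(\R[\alpha](K))_{0}=1$, which by the first paragraph is Conjecture~\ref{conj1} for $\R[\alpha](K)$.

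The main obstacle and the key technical input is the block-lifting theorem used above. The delicate point is that one must lift \emph{central} primitive idempotents, not merely arbitrary ones; this is handled because $Z(\R[\alpha](\hat{\Z}_{(p)}))$ is itself a finitely generated $\hat{\Z}_{(p)}$-module and hence $\mathfrak{m}$-adically complete, and because its degree-$0$ piece sits inside the reduced commutative algebra $\R[\alpha](\hat{\Z}_{(p)})_{0}\cong \hat{\Z}_{(p)}^{|I^{\alpha}|}$, so that Hensel's lemma for commutative semilocal rings applies without friction. The completeness of $\hat{\Z}_{(p)}$---as opposed to the mere localization $\Z_{(p)}$---is exactly what the formulation of the lemma is implicitly exploiting, and this step is the one I would verify most carefully.
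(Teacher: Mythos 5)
Your proof is correct and follows essentially the same route as the paper: reduce Conjecture~\ref{conj1} to indecomposability via the duality $\dim Z(\R[\alpha])_0=\dim\Tr(\R[\alpha])_{d_{\Lam,\alpha}}$ together with Corollary~\ref{Pd trace}, reduce to $K=\mathbb{F}_p$ by flat base change (the paper does this at the outset via \cite[Proposition 2.1(d)]{SVV}), pass $\Q\hookrightarrow\hat{\Q}_{(p)}$, and compare $\R[\alpha](\mathbb{F}_p)$ with $\R[\alpha](\hat{\Q}_{(p)})$ through the free $\hat{\Z}_{(p)}$-form supplied by Corollary~\ref{free}. The only stylistic difference is that the paper lifts a hypothetical decomposition of $e(\alpha)$ in $\R[\alpha](\mathbb{F}_p)$ into orthogonal (not necessarily primitive) central idempotents of $\R[\alpha](\hat{\Z}_{(p)})$ and derives a contradiction over $\hat{\Q}_{(p)}$, whereas you invoke the block-lifting bijection on primitive central idempotents directly and propagate indecomposability downward; both rest on the same Curtis--Reiner idempotent-lifting results over the complete DVR.
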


\begin{proof} Applying \cite[Proposition 2.1(d)]{SVV}, we can assume without loss of generality that $K=\mathbb{F}_p$, the finite field with $p$ elements.
Since $\Q\hookrightarrow\hat{\Q}_{(p)}$, Conjecture \ref{conj1} holds for $\R[\alpha](\Q)$ implies that it also holds for $\R[\hat{\Q}_{(p)}]$.
For any $\O\in\{\hat{\Z}_{(p)},K,\hat{\Q}_{(p)}\}$, we set $$
e(\alpha)_{\O}:=\sum_{\bi\in I^\alpha}e(\bi)_{\O}.
$$
which is a central idempotents in $\R[\alpha](\O)$. In particular, $e(\alpha)_{\hat{\Z}_{(p)}}$ is a lift of $e(\alpha)_{K}$ in $\R[\alpha](K)$. Now Conjecture \ref{conj1} holds for $\R[\alpha](\hat{\Q}_{(p)})$ means
$e(\alpha)_{\hat{\Q}_{(p)}}$ is a central primitive idempotent in $\R[\alpha](\hat{\Q}_{(p)})$.

Note that $$\begin{aligned}
&\R[\alpha](K)\cong K\otimes_{\hat{\Z}_{(p)}}\R[\alpha](\hat{\Z}_{(p)})\cong \R[\alpha](\hat{\Z}_{(p)})/\mathfrak{m}\R[\alpha](\hat{\Z}_{(p)}),\\
&\R[\alpha](\hat{\Q}_{(p)})\cong \hat{\Q}_{(p)}\otimes_{\hat{\Z}_{(p)}}\R[\alpha](\hat{\Z}_{(p)}),
\end{aligned}
$$
where $\mathfrak{m}$ is the unique maximal ideal of $\hat{\Z}_{(p)}$. By Corollary \ref{free}, $\R[\alpha](\hat{\Z}_{(p)})$ is free over $\hat{\Z}_{(p)}$.

Suppose that $e(\alpha)_{K}=\overline{e(1)}\oplus\cdots\oplus \overline{e(k)}$ is a decomposition of $e(\alpha)_{K}$ into a direct sum of pairwise orthogonal
(nonzero) central primitive idempotents in $\R[\alpha](K)$. Applying \cite[Proposition 5.22, Theorem 6.7, \S6, Exercise 8]{CR}, for each $1\leq i\leq k$, we can get a lift $e(i)$ in $\R[\alpha](\hat{\Z}_{(p)})$ of $\overline{e(i)}$, such that $\{e(i)|1\leq i\leq k\}$ is a set of pairwise orthogonal central idempotents in $\R[\alpha](\hat{\Z}_{(p)})$ and $\sum_{j=1}^k e(i)=e(\alpha)_{\hat{\Z}_{(p)}}$. Now we have $$
e(\alpha)_{\hat{\Q}_{(p)}}=(1_{\hat{\Q}_{(p)}}\otimes_{\hat{\Z}_{(p)}}e(1))\oplus\cdots\oplus (1_{\hat{\Q}_{(p)}}\otimes_{\hat{\Z}_{(p)}}e(k)),
$$
is a decomposition of $e(\alpha)_{\hat{\Q}_{(p)}}$ into a direct sum of pairwise orthogonal central elements in $\R[\alpha]({\hat{\Q}_{(p)}})$. Moreover, $$
(1_{{\hat{\Q}_{(p)}}}\otimes_{\hat{\Z}_{(p)}}e(j))^2=(1_{{\hat{\Q}_{(p)}}}\otimes_{\hat{\Z}_{(p)}}e(j)),\,\,\forall\,1\leq j\leq k.
$$
By the discussion in the last paragraph, $\R[\alpha](\hat{\Z}_{(p)})$ is a torsion-free $\hat{\Z}_{(p)}$-module. It follows that the canonical
map $\R[\alpha](\hat{\Z}_{(p)})\rightarrow\R[\alpha]({\hat{\Q}_{(p)}})$ is injective. Thus each $1_{{\hat{\Q}_{(p)}}}\otimes_{\hat{\Z}_{(p)}}e(j)$ must be nonzero, hence is a (nonzero) central idempotent in
$\R[\alpha]({\hat{\Q}_{(p)}})$. We get a contradiction to the fact that $e(\alpha)_{{\hat{\Q}_{(p)}}}$ is a central primitive idempotent in $\R[\alpha]({\hat{\Q}_{(p)}})$. This proves the lemma.
\end{proof}

We end this subsection with the following theorem, which is the third main result of this paper.

\begin{thm}\label{4mainthm} Let $K$ be a field of arbitrary characteristic. Suppose that the polynomials $\{Q_{ij}(u,v)|i,j\in I\}$ are given as \cite[\S3.2.4]{Rou2}, $\mathfrak{g}$ is either symmetric and of finite type, or $\mathfrak{g}$ is of type $A_{\infty}$ or affine type $A_{(e-1)}^{(1)}$ with $e>1$.  Then Conjecture \ref{conj1} holds.
\end{thm}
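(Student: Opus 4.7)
The plan is a two-step reduction: first establish Conjecture \ref{conj1} over $\Q$ using the special cases collected in Remark \ref{3rem}, and then lift to arbitrary characteristic via Lemma \ref{plem}, finally upgrading the ``$\dim=1$'' statement to the explicit spanning claim by invoking Corollary \ref{Pd trace}.

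First I would handle $\cha K=0$. By Lemma \ref{HS1} and Lemma \ref{tLam}, the graded dimension of $\Tr(\R[\alpha])$ depends only on the Cartan datum and $\Lam$ (not on $K$ nor, up to a diagonal rescaling automorphism, on the particular choice of polynomials $Q_{ij}$), so it suffices to verify Conjecture \ref{conj1} for $K=\Q$. When $\mathfrak{g}$ is symmetric of finite type, Remark \ref{3rem}(1) directly yields $\dim Z(\R[\alpha](\Q))_0=1$, which via the Frobenius duality $Z(\R[\alpha])\cong(\Tr(\R[\alpha]))^*\langle d_{\Lam,\alpha}\rangle$ gives $\dim(\Tr(\R[\alpha](\Q)))_{d_{\Lam,\alpha}}=1$. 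When $\mathfrak{g}$ is of type $A_\infty$ or $A^{(1)}_{e-1}$ with $e>1$, the relevant condition ``$(e,p)=1$'' from Remark \ref{3rem}(2) is automatic for $p=0$, so after a finite extension of $\Q$ (harmless by \cite[Proposition 2.1(d)]{SVV}) the algebra $\R[\alpha]$ becomes a block of a cyclotomic Hecke algebra of type $G(\ell,1,n)$ with block idempotent $e(\alpha)$, hence is indecomposable, and again $\dim(\Tr(\R[\alpha](\Q)))_{d_{\Lam,\alpha}}=1$.

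Second, I would pass to positive characteristic. Let $K$ be a field with $\cha K=p>0$. The Rouquier polynomials of \cite[\S3.2.4]{Rou2} are defined over $\Z$, so the hypotheses of Lemma \ref{plem} are met; combining that lemma with the first step delivers Conjecture \ref{conj1} for $\R[\alpha](K)$ in all three cases simultaneously (in particular, this covers the ``bad'' case $p\mid e$ in affine type $A$, where the direct block-theoretic argument of Remark \ref{3rem}(2) is unavailable).

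Finally, to promote $\dim(\Tr(\R[\alpha]))_{d_{\Lam,\alpha}}=1$ to the claimed equality with $K(S(\nu)+[\R[\alpha],\R[\alpha]])$: if $\R[\alpha]=0$ there is nothing to prove, and otherwise Theorem \ref{criterion} supplies a piecewise dominant sequence $\nu\in I^\alpha$; Corollary \ref{Pd trace} then shows that $S(\nu)+[\R[\alpha],\R[\alpha]]$ is a nonzero homogeneous element of $\Tr(\R[\alpha])$ of degree $d_{\Lam,\alpha}$, and by the one-dimensionality already established it must span the whole top component. The only genuinely delicate point is the technical compatibility between SVV's polynomials \cite[(11)]{SVV} (used in Remark \ref{3rem}(1)) and Rouquier's polynomials \cite[\S3.2.4]{Rou2} (assumed in Theorem \ref{4mainthm}); this is handled by a standard diagonal rescaling automorphism together with the observation that the graded cocenter dimension is invariant under such rescalings, an invariance already built into Lemma \ref{HS1}.
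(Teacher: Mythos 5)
Your proposal follows essentially the same two-step reduction as the paper: invoke the char-$0$ cases collected in Remark \ref{3rem} and then lift to positive characteristic via Lemma \ref{plem}. The paper's own proof is a one-paragraph version of exactly this.

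That said, you add two genuine refinements worth recording. First, the paper leaves implicit the passage from $\dim\bigl(\Tr(\RR^\Lam_\alpha)\bigr)_{d_{\Lam,\alpha}}=1$ to the explicit spanning claim of Conjecture \ref{conj1}; your use of Theorem \ref{criterion} to produce a piecewise dominant $\nu$ together with Corollary \ref{Pd trace} to certify $S(\nu)\notin[\R[\alpha],\R[\alpha]]$ (and a degree count as in Lemma \ref{degree1} showing $\deg S(\nu)=d_{\Lam,\alpha}$) closes that gap cleanly. Second, you correctly flag that Remark \ref{3rem}(1) cites \cite[Remark 3.41]{SVV}, which is stated for the polynomials \cite[(11)]{SVV}, whereas Theorem \ref{4mainthm} assumes the Rouquier choice \cite[\S3.2.4]{Rou2}; the paper glosses over this normalization issue, and a rescaling-of-generators argument is the right way to dispatch it.

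One caution: you assert that ``the graded dimension of $\Tr(\R[\alpha])$ depends only on the Cartan datum and $\Lam$'' and cite Lemmas \ref{HS1} and \ref{tLam} for this. That is not actually a consequence of those lemmas: Lemma \ref{HS1} controls the graded dimension of $\R[\alpha]$ itself, and Lemma \ref{tLam} identifies $\Tr(\R[\alpha])$ with $Z(\R[\alpha])^*$ up to a shift, but neither controls $\dim Z(\R[\alpha])$ across different ground rings --- indeed, field-independence of the center dimension is in the spirit of the Center Conjecture rather than a known fact. Fortunately this overclaim is harmless here, since for $\cha K=0$ one can cite \cite[Remark 3.41]{SVV} and the block-theoretic results directly (or argue by flat base change from $\Q$), without needing any such invariance statement.
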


\begin{proof} By assumption, each $Q_{ij}(u,v)$ is defined over $\Z$. If $\cha K=0$, the theorem holds by \cite[Remark 3.41]{SVV}, \cite{BK:GradedKL}, \cite{LM} and \cite{Br} (see Remark \ref{3rem}). Now applying Lemma \ref{plem}, we can deduce that the theorem still holds if
$\cha K>0$.
\end{proof}

\subsection{Relations with crystal basis}

Let $\mathcal{B}=\mathcal{B}(\Lam)$ be the crystal base of the integral highest weight $U_q(\fg)$-module $V(\Lam)$. For each $i\in I$, let $\widetilde{e}_i, \widetilde{f}_i: \mathcal{B}\rightarrow\mathcal{B}\sqcup\{0\}$ be the corresponding Kashiwara operators, let $\veps_i,\varphi_i: \mathcal{B}\rightarrow\Z$ be the
associated functions on $\mathcal{B}$, $\wt: \mathcal{B}\rightarrow P$ be the weight map. In this subsection we try to explain Theorem \ref{maincor1} using the crystal graph. In particular, we shall give a second proof of Theorem \ref{maincor1}.

The following is well-known and will be used in the Lemma \ref{pathlem1} and Lemma \ref{pathlem2}.

\begin{lem}\text{\rm (\cite[\S4.2, (4.3)]{Ka})}\label{wtlem} For each $i\in I$ and $b\in\mathcal{B}$,
$$\varphi_i(b)-\veps_i(b)=\<h_i,\wt(b)\>,\,\,\,\varphi_i(b), \veps_i(b)\geq 0. $$
\end{lem}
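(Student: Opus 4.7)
The plan is to deduce this from the standard structure of $i$-strings in the crystal basis of the integrable highest weight module $V(\Lam)$, together with the representation theory of $U_q(\mathfrak{sl}_2)$. Since the statement is already attributed to Kashiwara, the goal is really to recall the classical argument and make it self-contained.

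First I would fix $i \in I$ and restrict attention to the subalgebra $U_q(\mathfrak{sl}_{2,i}) \subset U_q(\mathfrak{g})$ generated by $e_i, f_i, h_i$. Since $V(\Lam)$ is integrable, it decomposes as a direct sum of finite-dimensional $U_q(\mathfrak{sl}_{2,i})$-modules, and the Kashiwara operators $\widetilde{e}_i, \widetilde{f}_i$ respect this decomposition. By Kashiwara's theorem on the existence and behavior of the crystal basis under $U_q(\mathfrak{sl}_2)$-restriction, $\mathcal{B}(\Lam)$ splits into a disjoint union of finite \emph{$i$-strings}: maximal sequences $b_0, b_1, \ldots, b_\ell$ with $b_{k+1} = \widetilde{f}_i b_k$, $\widetilde{e}_i b_0 = 0$, and $\widetilde{f}_i b_\ell = 0$. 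This immediately gives that $\veps_i(b)$ and $\varphi_i(b)$ are non-negative integers, establishing the second part of the lemma.

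Next I would verify the weight relation along a single string. If $b = b_k$ sits in the string $b_0, \ldots, b_\ell$, then by definition $\veps_i(b) = k$ and $\varphi_i(b) = \ell - k$. Since $\widetilde{f}_i$ shifts weight by $-\alpha_i$, we have $\wt(b) = \wt(b_0) - k\alpha_i$, so $\<h_i, \wt(b)\> = \<h_i, \wt(b_0)\> - 2k$. It therefore suffices to show $\<h_i, \wt(b_0)\> = \ell$. This is where the $\mathfrak{sl}_{2,i}$-module structure enters: the string $\{b_0, \ldots, b_\ell\}$ arises from an irreducible summand of highest weight $\<h_i, \wt(b_0)\>$, whose dimension is $\<h_i, \wt(b_0)\> + 1$. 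Matching dimensions gives $\ell + 1 = \<h_i, \wt(b_0)\> + 1$, hence $\<h_i, \wt(b)\> = (\ell - k) - k = \varphi_i(b) - \veps_i(b)$, as desired.

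The main (and essentially only) obstacle is justifying that the string length equals $\<h_i, \wt(b_0)\>$; this rests on Kashiwara's classification of crystal bases for finite-dimensional $U_q(\mathfrak{sl}_2)$-modules, which is already assumed in the citation \cite[\S4.2]{Ka}. Since our usage of this lemma is purely auxiliary (it is invoked in Lemmas \ref{pathlem1} and \ref{pathlem2} to translate between piecewise dominant sequences and crystal paths), I would simply cite Kashiwara rather than reproduce the full $\mathfrak{sl}_2$ argument in the paper.
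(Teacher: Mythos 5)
Your proposal correctly reconstructs the standard $\mathfrak{sl}_2$-string argument underlying Kashiwara's identity; the paper itself does not reprove this lemma but simply cites it from \cite[\S4.2, (4.3)]{Ka}, exactly as you conclude one should. Your sketch is consistent with that reference and with the lemma's purely auxiliary role in Lemmas \ref{pathlem1} and \ref{pathlem2}.
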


\begin{lem}\label{pathlem1} Suppose $L(\Lam)$ is the irreducible highest weight $U_q(\mathfrak{g})$-module with highest weight $\Lam$. \begin{equation}\label{pds} \nu=(\underbrace{\nu^{1},\nu^{1},\cdots,\nu^{1}}_{b_{1}\,copies},\cdots,\underbrace{\nu^{p},\nu^{p},\cdots,\nu^{p}}_{b_{p}\,copies})\in I^\alpha,
\end{equation}
is a piecewise dominant sequence with respect to $\Lam$, such that $\nu^{j}\neq\nu^{j+1},\forall\, 1\leq j<p$, $p,b_1,\cdots,b_p\in\Z^{\geq 1}$ with $\sum_{i=1}^{p}b_i=n$.
Then there is a path in the crystal graph of $\mathcal{B}$ of the following form: \begin{equation}\label{path}
v_\Lam\underbrace{\overset{\nu^1}{\rightarrow}\cdot\overset{\nu^1}{\rightarrow}\cdots\overset{\nu^1}{\rightarrow}}_{\text{$b_1$ copies}}\cdot\cdots\cdot
\underbrace{\overset{\nu^p}{\rightarrow}\cdot\overset{\nu^p}{\rightarrow}\cdots\overset{\nu^p}{\rightarrow}}_{\text{$b_p$ copies}} b .
\end{equation}
We call (\ref{path}) the crystal path associated to the piecewise dominant sequence $\nu=(\nu_1,\cdots,\nu_n)$ and set $b:=b_{\nu}$.
\end{lem}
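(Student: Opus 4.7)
The plan is to construct the claimed crystal path step by step, proceeding through the $p$ blocks of $\nu$ in order. Set $b^{(0)} := v_\Lam$ and, for each $1 \leq i \leq p$, define inductively $b^{(i)} := \wf_{\nu^i}^{b_i}b^{(i-1)}$. The goal is to verify by induction on $i$ that $b^{(i)}$, along with all the intermediate vertices $\wf_{\nu^i}^{k} b^{(i-1)}$ for $0 \leq k \leq b_i$, lie in $\mathcal{B}$ (i.e., are nonzero); then $b := b^{(p)}$ will give the endpoint of the desired path (\ref{path}).

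For the inductive step, I would assume that $b^{(i-1)} \neq 0$ with weight $\wt(b^{(i-1)}) = \Lam - \sum_{j=1}^{i-1} b_j \alpha_{\nu^j}$. The key observation is that all the intermediate vertices $\wf_{\nu^i}^{k}b^{(i-1)}$ are nonzero as long as $\varphi_{\nu^i}(b^{(i-1)}) \geq b_i$, since by the basic crystal axioms each application of $\wf_{\nu^i}$ to a nonzero element either sends it to $0$ (precisely when $\varphi_{\nu^i}$ vanishes) or decreases the value of $\varphi_{\nu^i}$ by exactly one. Now Lemma \ref{wtlem} combined with $\veps_{\nu^i}(b^{(i-1)}) \geq 0$ gives
$$\varphi_{\nu^i}(b^{(i-1)}) \geq \<h_{\nu^i}, \wt(b^{(i-1)})\> = \<h_{\nu^i}, \Lam - \sum_{j=1}^{i-1} b_j\alpha_{\nu^j}\> = \ell_i,$$
and the piecewise dominance of $\nu$ (Definition \ref{pddfn}) ensures $\ell_i \geq b_i$, as required. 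The weight computation for $b^{(i)}$ that feeds into the next stage of the induction is automatic from the weight-shift property of $\wf_{\nu^i}$.

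The base case $i=1$ is immediate: since $v_\Lam$ is the highest weight vector, $\veps_{\nu^1}(v_\Lam) = 0$ and hence $\varphi_{\nu^1}(v_\Lam) = \<h_{\nu^1},\Lam\> = \ell_1 \geq b_1$. Taking $b := b^{(p)}$ then yields the crystal path (\ref{path}). I do not expect a serious obstacle here; the argument is a direct consequence of the elementary identity $\varphi_i - \veps_i = \<h_i,\wt\>$ recalled in Lemma \ref{wtlem} together with the piecewise dominance inequality $\ell_i \geq b_i$, which is precisely tailored to allow exactly $b_i$ consecutive applications of $\wf_{\nu^i}$ at stage $i$ before risking a zero.
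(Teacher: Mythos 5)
Your proof is correct and follows essentially the same inductive argument as the paper: define $b^{(i)}=\wf_{\nu^i}^{b_i}b^{(i-1)}$, use $\varphi_{\nu^i}(b^{(i-1)})\geq\<h_{\nu^i},\wt(b^{(i-1)})\>=\ell_i\geq b_i$ via Lemma \ref{wtlem} and nonnegativity of $\veps_{\nu^i}$, and track the weight shift to feed the next step. The only difference is that you spell out explicitly why $\varphi_{\nu^i}\geq b_i$ guarantees all $b_i$ intermediate vertices are nonzero, which the paper leaves implicit.
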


\begin{proof} Using Lemma \ref{wtlem}, we can calculate $$
\varphi_{\nu^1}(v_\Lam)=\<h_{\nu^1},\wt(v_\Lam)\>+\veps_{\nu^1}(v_\Lam)\geq \<h_{\nu^1},\wt(v_\Lam)\>\geq b_1,
$$
where the last inequality follows from the definition of piecewise dominant sequence. It follows that $b^{(1)}:=\widetilde{f}_{\nu^1}^{b_1}v_\Lam\in\mathcal{B}$. Now we have $\wt(b^{(1)})=\wt(v_\Lam)-b_1\alpha_{\nu^1}=\Lam-b_1\alpha_{\nu^1}$. It follows that $$
\varphi_{\nu^2}(b^{(1)})=\<h_{\nu^2},\Lam-b_1\alpha_{\nu^1}\>+\veps_{\nu^2}(v_\Lam)\geq \<h_{\nu^2},\Lam-b_1\alpha_{\nu^1}\>\geq b_2,
$$ by Lemma \ref{wtlem}.
Hence $b^{(2)}:=\widetilde{f}_{\nu^2}^{b_2}\widetilde{f}_{\nu^1}^{b_1} v_\Lam\in\mathcal{B}$. In general, suppose that $b^{(t)}=\widetilde{f}_{\nu^t}^{b_t}\cdots \widetilde{f}_{\nu^1}^{b_1} v_\Lam\in\mathcal{B}$ is already defined, where $1\leq t<p$. Then we have $\wt(b^{(t)})=\Lam-\sum_{j=1}^{t}b_j\alpha_{\nu^j}$. It follows that $$
\varphi_{\nu^{t+1}}(b^{(t)})=\<h_{\nu^{t+1}},\Lam-\sum_{j=1}^{t}b_j\alpha_{\nu^j}\>+\veps_{\nu^{t+1}}(b^{(t)}))\geq \<h_{\nu^{t+1}},\Lam-\sum_{j=1}^{t}b_j\alpha_{\nu^j}\>\geq b_{t+1},
$$by Lemma \ref{wtlem} again.
Hence $b^{(t+1)}:=\widetilde{f}_{\nu^{t+1}}^{b_{t+1}}\cdots \widetilde{f}_{\nu^1}^{b_1} v_\Lam\in\mathcal{B}$. By an induction on $t$, we get a path in the crystal graph of $\mathcal{B}$ of the form (\ref{path}).
\end{proof}

\begin{lem}\label{pathlem2} Suppose $L(\Lam)$ is the irreducible highest weight $U_q(\mathfrak{g})$-module with highest weight $\Lam$, $b\in\mathcal{B}=\mathcal{B}(\Lam)$ with $\wt(b)=\Lam-\alpha$, $\alpha\in Q_n^+$. Then for any $i\in I$ satisfying $\veps_i(b)>0$, there is a path in the crystal graph of $\mathcal{B}$ of the following form: \begin{equation}\label{path2}
v_\Lam\underbrace{\overset{\nu^1}{\rightarrow}\cdot\overset{\nu^1}{\rightarrow}\cdots\overset{\nu^1}{\rightarrow}}_{\text{$b_1$ copies}}\cdot\cdots\cdot
\underbrace{\overset{\nu^p}{\rightarrow}\cdot\overset{\nu^p}{\rightarrow}\cdots\overset{\nu^p}{\rightarrow}}_{\text{$b_p$ copies}} b ,
\end{equation}
such that $\nu^p=i$ and it is the crystal path associated to the piecewise dominant sequence $\nu\in I^\alpha$ (see (\ref{pds})) with respect to $\Lam$.
\end{lem}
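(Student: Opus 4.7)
The plan is to proceed by strong induction on the height $n = |\alpha|$, the key idea being to slide all the way up the $i$-string through $b$ before applying the induction hypothesis (necessarily with a different color). The $n = 0$ case is vacuous since $\veps_i(v_\Lambda) = 0$ for every $i$. For the inductive step, set $k := \veps_i(b) \geq 1$ and let $b_0 := \widetilde{e}_i^{\,k} b$, the top of the $i$-string through $b$; then $\veps_i(b_0) = 0$, $\wt(b_0) = \Lam - (\alpha - k\alpha_i)$, and $\widetilde{f}_i^{\,k} b_0 = b \neq 0$, so by Lemma \ref{wtlem} the quantity $\varphi_i(b_0) = \<h_i,\wt(b_0)\>$ is at least $k$.

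If $\alpha = k\alpha_i$, then $b_0 = v_\Lam$ (the unique element of $\mathcal{B}(\Lam)$ of weight $\Lam$), and I would take $\nu := (\underbrace{i,\ldots,i}_{k \text{ copies}})$; the single block satisfies $\ell_1 = \<h_i,\Lam\> = \varphi_i(v_\Lam) \geq k$, so $\nu$ is piecewise dominant and its associated crystal path ends at $b$. Otherwise $b_0 \neq v_\Lam$; since in $\mathcal{B}(\Lam)$ the element $v_\Lam$ is the unique vertex annihilated by all the $\widetilde{e}_j$ (by connectedness of the integrable highest weight crystal), there must exist some $j \in I$ with $\veps_j(b_0) > 0$, and necessarily $j \neq i$. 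The induction hypothesis, applied to $(\alpha - k\alpha_i, b_0, j)$ in place of $(\alpha, b, i)$, yields a piecewise dominant sequence $\nu'\in I^{\alpha - k\alpha_i}$ whose associated crystal path ends at $b_0$ and whose last block carries the label $j$. I then define $\nu := (\nu'_1, \ldots, \nu'_{n-k}, \underbrace{i, \ldots, i}_{k \text{ copies}})$; because $j \neq i$, the canonical decomposition (\ref{decomp4}) of $\nu$ consists of the blocks of $\nu'$ together with a new final block of $k$ copies of $i$.

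The only remaining point---and the main (but mild) obstacle---is to verify piecewise dominance of the appended block. The first $p$ block conditions $\ell_t \geq b_t$ are inherited unchanged from $\nu'$, while for the new block one computes
\[
\ell_{p+1} \;=\; \<h_i,\Lam - (\alpha - k\alpha_i)\> \;=\; \<h_i,\wt(b_0)\> \;=\; \varphi_i(b_0) - \veps_i(b_0) \;=\; \varphi_i(b_0) \;\geq\; k \;=\; b_{p+1},
\]
where I used Lemma \ref{wtlem} together with $\veps_i(b_0) = 0$. Concatenating the crystal path furnished by the induction hypothesis with the $k$ subsequent applications of $\widetilde{f}_i$ taking $b_0$ to $b$ produces the desired path (\ref{path2}), completing the argument.
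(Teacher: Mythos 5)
Your proof is correct and follows essentially the same route as the paper's: induct on $|\alpha|$, slide up the entire $i$-string to $b_0 = \widetilde{e}_i^{\veps_i(b)}b$, apply the induction hypothesis to $b_0$ with a color $j\neq i$, and verify piecewise dominance of the appended $i$-block via Lemma~\ref{wtlem}. Your treatment is slightly more careful than the paper's in that you explicitly separate the boundary case $b_0 = v_\Lam$, where the inductive hypothesis would be vacuous and one must simply check the single $i$-block directly.
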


\begin{proof}  Suppose $\Lam-\alpha$ is a weight of $L(\Lam)$. We use induction on $|\alpha|$. For any $b\in\mathcal{B}$ with $\wt(b)=\Lam-\alpha$, we can find $i\in I$ such that $b_0:=\veps_i(b)>0$. Set $b':=\widetilde{e}_i^{b_0}b$. Then $b'\in\mathcal{B}$ and
$\wt(b')=\wt(b)+b_0\alpha_i$. As a result, $$
\<h_i,\wt(b')\>=\<h_i,\wt(b)\>+2b_0=\varphi_i(b)-b_0+2b_0=\varphi_i(b)+b_0\geq b_0,
$$where in the second equality we have used Lemma \ref{wtlem}.
By induction hypothesis, there is a path in the crystal graph of $\mathcal{B}$: $$
v_\Lam\underbrace{\overset{\nu^1}{\rightarrow}\cdot\overset{\nu^1}{\rightarrow}\cdots\overset{\nu^1}{\rightarrow}}_{\text{$b_1$ copies}}\cdot\cdots\cdot
\underbrace{\overset{\nu^{p-1}}{\rightarrow}\cdot\overset{\nu^{p-1}}{\rightarrow}\cdots\overset{\nu^{p-1}}{\rightarrow}}_{\text{$b_{p-1}$ copies}} b' ,
$$
where $\nu^{j}\neq\nu^{j+1},\forall\, 1\leq j<p-1$, $p-1,b_1,\cdots,b_{p-1}\in\Z^{\geq 1}$ with $\sum_{i=1}^{p-1}b_i=n-b_0$, such that $$ \nu=(\underbrace{\nu^{1},\nu^{1},\cdots,\nu^{1}}_{b_{1}\,copies},\cdots,\underbrace{\nu^{p-1},\nu^{p-1},\cdots,\nu^{p-1}}_{b_{p-1}\,copies})\in I^{\alpha-b_0\alpha_i},
$$
is a piecewise dominant sequence with respect to $\Lam$. By construction, $\veps_i(b')=0$. It follows that $\nu^{p-1}\neq i$. Concatenating this path with the path $b'\underbrace{\overset{i}{\rightarrow}\cdot\overset{i}{\rightarrow}}_{\text{$b_0$ copies}}b$ we prove the statement.
\end{proof}

\begin{rem} We remark that the Theorem \ref{maincor1} can be deduced from Lemma \ref{pathlem1} and Lemma \ref{pathlem2}, which also gives a second proof of Theorem \ref{criterion}. Although Lemma \ref{pathlem2} says that for each $b\in\mathcal{B}$, we can always find a crystal path associated to a piecewise dominant sequence, the crystal path of this kind may not be unique.
\end{rem}

For any two piecewise dominant sequence $\mu,\nu\in I^\alpha$, we define $\mu\sim\nu$ if and only if $b_\mu=b_\nu$. In particular, this defines an equivalence relation ``$\sim$'' on the set $\PD$ of piecewise dominant sequences with respect to $\Lam$. Let $\PD\!/\!\sim$ be a set of representatives of all the equivalence classes in $\PD$. We end this paper with the following conjecture.

\begin{conj} Suppose $$\PD\!/\!\sim=\{\mu^{(i)}|1\leq i\leq m\}.
$$ Then the set of elements \begin{equation}\label{basis11}
\bigl\{e(\mu^{(i)})+[\R[\alpha],\R[\alpha]]\bigm|1\leq i\leq m\bigr\}
\end{equation}
forms a $K$-basis of the degree $0$ component $\Tr(\R[\alpha])_0$ of the cocenter of $\R[\alpha]$, and the following set of elements \begin{equation}\label{basis12}
\bigl\{1_\Q\otimes_{\Z}[\R[\alpha]e(\mu^{(i)})]\bigm|1\leq i\leq m\bigr\}
\end{equation}
forms a $\Q$-basis of $\Q\otimes_{\Z}K(\Proj\R[\alpha])$.
\end{conj}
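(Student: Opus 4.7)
The plan is to prove both parts simultaneously by linking them through the Hattori--Stallings trace $\rho:K(\Proj\R[\alpha])\to\Tr(\R[\alpha])_0$, $[\R[\alpha]e]\mapsto e+[\R[\alpha],\R[\alpha]]$, which sends the set in (\ref{basis12}) exactly onto the set in (\ref{basis11}). The first observation is a cardinality count: by Lemma \ref{pathlem1} and Lemma \ref{pathlem2}, the assignment $\mu\mapsto b_\mu$ induces a bijection between $\PD\!/\!\sim$ and $\mathcal{B}(\Lam)_{\Lam-\alpha}$, so together with Kang--Kashiwara's equality $\#\Irr(\R[\alpha])=\dim L(\Lam)_{\Lam-\alpha}=\#\mathcal{B}(\Lam)_{\Lam-\alpha}$ (already invoked in Theorem \ref{maincor1}) one obtains $m=\rank_\Z K(\Proj\R[\alpha])=\dim_\Q\bigl(\Q\otimes_\Z K(\Proj\R[\alpha])\bigr)$; both candidate sets thus have the right cardinality.

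Next I would tackle (\ref{basis12}). Via the Kang--Kashiwara categorification $\Q\otimes_\Z K(\Proj\R[\alpha])\cong L(\Lam)_{\Lam-\alpha}$ used in the proof of Theorem \ref{maincor1}, the class $1_\Q\otimes[\R[\alpha]e(\mu^{(i)})]$ is identified with $f_{\mu^{(i)}_n}\cdots f_{\mu^{(i)}_1}v_\Lam=f_{\nu^p}^{b_p}\cdots f_{\nu^1}^{b_1}v_\Lam$, where $\mu^{(i)}$ is decomposed as in (\ref{decomp4}). Since there are $m=\dim L(\Lam)_{\Lam-\alpha}$ of these vectors, it suffices to prove linear independence. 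The key technical step will be a triangularity statement: expanded in the lower global basis $\{G(b)\mid b\in\mathcal{B}(\Lam)_{\Lam-\alpha}\}$, the vector $f_{\nu^p}^{b_p}\cdots f_{\nu^1}^{b_1}v_\Lam$ should equal $b_1!\cdots b_p!\,G(b_{\mu^{(i)}})$ plus a $\Q$-linear combination of $G(b)$ with $b$ strictly lower than $b_{\mu^{(i)}}$ in a suitable refinement of the crystal partial order on $\mathcal{B}(\Lam)$. This should follow from the specialization $f_i^k=k!\,f_i^{(k)}$ at $q=1$ together with Kashiwara's compatibility $f_i^{(k)}G(b)\equiv G(\widetilde{f}_i^{k}b)$ modulo $G(b')$ with $b'$ earlier in the order (cf.\ \cite{Ka}). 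Distinctness of the $b_{\mu^{(i)}}$ then forces linear independence.

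For (\ref{basis11}), applying $\rho\otimes_\Z\Q$ transports the just-established basis (\ref{basis12}) to (\ref{basis11}) (tensored up to $\Q$), so linear independence over $\Q$ will follow once $\rho\otimes_\Z\Q$ is shown to be injective. Injectivity should be a consequence of the symmetric-algebra structure on $\R[\alpha]$ (Lemma \ref{tLam}): the perfect pairing $\Tr(\R[\alpha])_0\otimes Z(\R[\alpha])_{d_{\Lam,\alpha}}\to K$ allows one to separate distinct projective classes through dual central elements. Spanning of (\ref{basis11}) is then supplied by Theorem \ref{principle generator}(2), provided that for any two piecewise dominant sequences $\mu\sim\nu$ the class $e(\nu)+[\R[\alpha],\R[\alpha]]$ is a $K$-scalar multiple of $e(\mu)+[\R[\alpha],\R[\alpha]]$; this reduction should follow by applying $\rho$ to the identity $[\R[\alpha]e(\mu)]=[\R[\alpha]e(\nu)]$ in $\Q\otimes_\Z K(\Proj\R[\alpha])$, which holds because under the categorification both classes become scalar multiples of the single vector indexed by the common crystal vertex $b_\mu=b_\nu$.

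The main obstacle, in my view, is the triangularity claim in $L(\Lam)_{\Lam-\alpha}$: while the Kashiwara-operator picture at $q\to 0$ is clean, transferring it to the specialization $q=1$ with the classical Chevalley generators requires careful bookkeeping of the correction terms accumulated at each iterated application of $f_{\nu^{i}}^{b_i}$. A secondary obstacle is the positive-characteristic case of (\ref{basis11}), since Theorem \ref{principle generator} requires $\cha K=0$; one expects this to be handled by a lifting argument along the lines of Lemma \ref{plem}, using the $\Z$-freeness of $\R[\alpha]$ from Corollary \ref{free} to reduce to the characteristic zero situation already treated above.
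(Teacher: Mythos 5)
The statement you were asked to establish is labelled a \emph{Conjecture} in the paper: the authors present it as an open problem with which they close the article, and they offer no proof whatsoever, nor even a partial argument. So there is no ``paper's own proof'' against which to compare your attempt; your proposal must be judged entirely on its own internal coherence.

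Your cardinality observation is correct and is genuinely backed by Lemmas \ref{pathlem1} and \ref{pathlem2} together with \cite[Theorem 6.2]{KK}: the map $\mu\mapsto b_\mu$ does give a bijection $\PD/\!\sim\;\to\mathcal{B}(\Lam)_{\Lam-\alpha}$, so $m=\dim L(\Lam)_{\Lam-\alpha}=\rank K(\Proj\R[\alpha])$. Likewise, the overall strategy of linking (\ref{basis11}) and (\ref{basis12}) through the Hattori--Stallings trace $\rho$ is a reasonable one. But three steps are not established, and at least one of them is more than a bookkeeping issue.

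First, and most seriously, the claimed triangularity of $f_{\nu^p}^{b_p}\cdots f_{\nu^1}^{b_1}v_\Lam$ with respect to the lower global basis is unproven and is not a routine consequence of Kashiwara's one-letter compatibility $f_i^{(k)}G(b)\equiv G(\widetilde f_i^{\,k}b)$. Kashiwara's error terms for a fixed $i$ are controlled by $\veps_i$; when you compose $f_{\nu^1}^{(b_1)},\dots,f_{\nu^p}^{(b_p)}$ with different colours $\nu^i$, the error terms accumulated at stage $i$ need not remain subordinate with respect to whatever ordering controls stage $i+1$. You would need to construct a single partial order on $\mathcal{B}(\Lam)_{\Lam-\alpha}$ that is simultaneously compatible with each colour along each path, and verify it separates the $b_{\mu^{(i)}}$. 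You correctly flag this as the main obstacle; as written it is a gap, not a proof.

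Second, your spanning argument for (\ref{basis11}) contains an internal inconsistency. You invoke the identity $[\R[\alpha]e(\mu)]=[\R[\alpha]e(\nu)]$ for $\mu\sim\nu$, on the grounds that ``both classes become scalar multiples of the single vector indexed by $b_\mu=b_\nu$.'' But this contradicts your own triangularity hypothesis, under which $f_{\mu_n}\cdots f_{\mu_1}v_\Lam$ is $b_1!\cdots b_p!\,G(b_\mu)$ \emph{plus nonzero lower-order corrections}, hence in general not a scalar multiple of $G(b_\mu)$ and certainly not equal to $f_{\nu_n}\cdots f_{\nu_1}v_\Lam$ when $\mu\neq\nu$. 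Fortunately the step is unnecessary: if (\ref{basis12}) is a $\Q$-basis, $\rho\otimes\Q$ is injective, and one has the equality $\dim_K\Tr(\R[\alpha])_0=m$ (which is implicit in \cite[Theorem 3.31(c)]{SVV}, invoked in the Remark following Corollary \ref{maincor0}), then (\ref{basis11}) is a basis by dimension count, with no need to compare $e(\mu)$ and $e(\nu)$ directly. You should excise the erroneous claim and replace it by that count.

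Third, the injectivity of $\rho\otimes_{\Z}\Q$ is asserted to follow from the symmetric-algebra structure, but no argument is supplied. The homogeneous symmetrizing form of Lemma \ref{tLam} gives a perfect pairing between $\Tr(\R[\alpha])$ and $Z(\R[\alpha])$, but it does not obviously separate the classes $e(\mu^{(i)})+[\R[\alpha],\R[\alpha]]$ without further input (e.g.\ producing explicit dual central elements, which is itself close to the Center Conjecture). If you instead rely on the characteristic-zero injectivity of the Hattori--Stallings trace for finite-dimensional algebras, you should cite it and verify its hypotheses rather than attributing it to Lemma \ref{tLam}. As with the triangularity step, this is a genuine missing ingredient rather than a detail.
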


\bigskip

\end{document}